\newcounter{ENUM}
\newcommand{\itm}{\item}
\newenvironment{Ilist}{\renewcommand{\theENUM}{\Roman{ENUM}}\renewcommand{\itm}{\addtocounter{ENUM}{1}\item[(\theENUM)]}\begin{itemize}\setcounter{ENUM}{0}}{\end{itemize}}
\newenvironment{alist}[1][0]{\renewcommand{\theENUM}{\alph{ENUM}}\renewcommand{\itm}{\addtocounter{ENUM}{1}\item[(\theENUM)]}\begin{itemize}\setcounter{ENUM}{#1}}{\end{itemize}}
\newcommand{\margh}[1]{}
\def\risom{\overset{\sim}{\rightarrow}}
\def\ZZ{{\mathbb Z}}
\def\bn{{\bm{n}}}
\def\sE{{\mathscr E}}
\def\sF{{\mathscr F}}
\def\sL{{\mathscr L}}
\def\sM{{\mathscr M}}
\def\sO{{\mathscr O}}
\def\sV{{\mathscr V}}
\def\fg{{\mathfrak g}}
\def\e{\varepsilon}
\def\vp{\varphi}
\def\GL{\operatorname{GL}}
\def\Spec{\operatorname{Spec}}
\def\Pic{\operatorname{Pic}}
\def\id{\operatorname{id}}
\def\univ{\operatorname{univ}}
\def\rk{\operatorname{rk}}
\def\im{\operatorname{im}}
\def\ord{\operatorname{ord}}
\def\LG{\operatorname{LG}}
\newtheorem{thm}{Theorem}[section]
\newtheorem{prop}[thm]{Proposition}
\newtheorem{lem}[thm]{Lemma}
\newtheorem{cor}[thm]{Corollary}
\theoremstyle{definition}
\newtheorem{defn}[thm]{Definition}
\newtheorem{ex}[thm]{Example}
\newtheorem{sit}[thm]{Situation}
\theoremstyle{remark}
\newtheorem{notn}[thm]{Notation}
\newtheorem{rem}[thm]{Remark}
\newtheorem{warn}[thm]{Warning}
\numberwithin{equation}{section}
\begin{document}
\title{Limit linear series for curves not of compact type}
\author{Brian Osserman}
\begin{abstract} 
We introduce a notion of limit linear series for nodal curves which are
not of compact type. We give a construction of a moduli space of limit 
linear series, which works also in smoothing families, and we prove a 
corresponding specialization result. For a more restricted class
of curves which simultaneously generalizes two-component curves and
curves of compact type, we give an equivalent definition of limit linear
series, which is visibly a generalization of the Eisenbud-Harris 
definition. Finally, for the same class of curves, we prove a smoothing
theorem which constitutes an improvement over known results even in the
compact-type case.
\end{abstract}

\thanks{The author was partially supported by NSA grant H98230-11-1-0159
and Simons Foundation grant \#279151 during the preparation of this work.}
\maketitle

\section{Introduction}

The 1980's saw spectacular progress in the theory of linear 
series on curves and their applications, including the proofs of the
Brill-Noether (Griffiths-Harris \cite{g-h1}) and Gieseker-Petri 
(Gieseker \cite{gi1}) theorems, new results on the geometry of general 
linear series (Eisenbud-Harris \cite{e-h4}), and the proof that 
moduli spaces of curves of sufficiently high genus are of general
type (Harris-Mumford \cite{h-m2} and Eisenbud-Harris \cite{e-h6}).
What these results all had in common was that they made central use
of degeneration techniques, studying what happens to linear series as 
smooth curves degenerate to singular ones. Ultimately, Eisenbud and Harris 
developed a general theory of ``limit linear series'' for
curves of compact type, meaning those curves whose dual 
graphs are trees, or equivalently, whose Jacobians are compact.

For more than 25 years, the question of how to extend the Eisenbud-Harris
theory to curves not of compact type has remained open. Aside from the
intrinsic appeal of the question, there are various reasons one would
like to have such a theory:
\begin{itemize}
\item it would offer the most systematic
approach to computing the cohomology classes of higher-codimension
Brill-Noether classes on moduli spaces of curves;
\item it would allow greater
flexibility in choosing a degeneration to approach open questions such as
the maximal rank conjecture;
\item and it likewise offers a more general setting
for analyzing generic fibers of specific families of curves. For instance,
degenerations arising from considering modular curves in positive 
characteristic are often two-component nodal curves.
\end{itemize}
The question of limit linear series for curves not of compact type has 
been explored by Esteves in various papers, most notably with
Medeiros in \cite{e-m1}, but to date, no one has been able to develop a 
complete theory generalizing that of Eisenbud and Harris. Recently,
Amini and Baker \cite{a-b1} have proposed a notion of limit linear series
based on Brill-Noether theory for graphs, which they show generalizes
the definition of Eisenbud and Harris. However, while they prove a 
specialization theorem, it is not clear how to prove a smoothing theorem
for Amini-Baker limit linear series, or how to construct moduli spaces.

In the present paper, we propose a different notion of limit linear series
for curves not of compact type, developed independently and motivated in 
part by work of the author in higher rank \cite{os20}. After giving the 
definition, we construct moduli spaces both over individual curves and in 
smoothing families, and use them to prove a specialization result. We then
show that our definition is a generalization of the Eisenbud-Harris
definition. In fact, we do considerably more: 
for the class of curves of ``pseudocompact type,'' which is a simultaneous 
generalization of curves of
compact type and curves with two components (see Figure \ref{fig} below), 
we give an equivalent 
formulation which visibly generalizes the Eisenbud-Harris definition.
In essence, our more general definition is well-suited for abstract
theory and constructions, while the second definition is more 
tractable for computations.
Finally, for curves of pseudocompact type we prove a smoothing theorem, 
which is an improvement even for the compact-type case because it does
not only apply to refined limit linear series. 

To apply our smoothing
theorem, it is necessary to produce families of limit linear series 
having the expected dimension, and accordingly in \cite{os23} we 
carry out dimension counts. Using our generalized Eisenbud-Harris
definition, we show that for curves of pseudocompact type
the expected dimension of spaces of limit linear series is always correct,
in the sense that if certain gluing conditions impose the maximal 
codimension, then the dimension agrees with the Brill-Noether number $\rho$.
We also investigate several families of curves for which we can show the
gluing conditions do indeed impose the maximal codimension, giving in
particular new criteria for the generic fiber of a one-parameter family
of curves to be Brill-Noether general. One of the families we consider
in \cite{os23} is a broad generalization of the curves considered by Cools, 
Draisma, Payne and Robeva in the graph-theoretic context in \cite{c-d-p-r}, 
and we are able to use our theory to shed new light on their results,
and to suggest further directions of investigation for the Brill-Noether 
theory of graphs.
The relationship to the Amini-Baker theory will be investigated more
thoroughly in \cite{os24}, but in essence our approach keeps track of
more gluing data, while minimizing the role of graph theory. Although 
this may in principle make computations more
difficult, in practice this may not be the case, and we have found that
our approach has the desired dimension behavior in some cases (such as
binary curves) for which the Amini-Baker theory does not.

We now explain the basic ideas that go into our definition of limit linear
series. Suppose that $B=\Spec R$ with $R$ a discrete valuation ring, and 
$X \to B$ is a family of curves over $B$ with smooth generic fiber and 
reducible nodal special fiber $X_0$. Further suppose that the total space 
$X$ is regular. Then each component $Z_v$ of $X_0$ is a (Cartier) divisor 
on $X$, so any given extension of a line bundle $\sL_{\eta}$ on the generic 
fiber can be twisted by $\sO_X(Z_v)$ to obtain an infinite family of 
extensions.  Given $V_{\eta}$ an $(r+1)$-dimensional space of global 
sections of 
$\sL_{\eta}$, for any extension of $\sL_{\eta}$ there is a unique extension
of $V_{\eta}$. The idea introduced by Eisenbud and Harris was to use these
twists to concentrate multidegree on each component $Z_v$ of $X_0$, and 
then to restrict the resulting extension of $(\sL_{\eta},V_{\eta})$ to
$Z_v$, thereby obtaining a collection $(\sL^v,V^v)_v$ of $\fg^r_d$s on the 
components of $X_0$. The question then becomes to understand which such 
tuples of $\fg^r_d$s can arise as a limit in this way. Eisenbud and Harris 
found a compatibility condition in terms of vanishing sequences at the 
nodes, and used this to define their notion of limit linear series. 
The power of their definition was that it 
was fundamentally inductive, describing limit linear series almost
independently on each component, and thereby making computations very
tractable. However, the drawback of their definition was that it was
difficult to generalize, and also to use for more theoretical purposes, 
such as moduli space constructions. 

In \cite{os20}, it was shown that one can state an
equivalent formulation of the Eisenbud-Harris definition as follows:
if $w$ is a multidegree of total degree $d$ on $X_0$, and $\sL_w$ 
denotes the extension of $\sL_{\eta}$ having multidegree $w$, then
$\Gamma(X_0,\sL_w|_{X_0})$ contains the extension of $V_{\eta}$, and
must therefore have dimension at least $r+1$. Moreover, it is 
straightforward to see that this extension of $V_{\eta}$ may be 
obtained by gluing together sections from the various $V^v$. This 
leads to a definition of limit linear series as a generalized 
determinantal locus (Definition \ref{def:lls} below), which yields new 
moduli space constructions, and
which also lends itself to generalization to curves not of compact type.
The other basic ingredient of our definition is that we allow for
insertion of chain of rational curves at nodes, and keep track of 
information on these curves as combinatorially as possible, only 
considering spaces of global sections on the original components. 

We next discuss our equivalent definition, generalizing the Eisenbud-Harris
definition to a broader class of curves. We begin by recalling their 
definition. Given a tuple $(\sL^v,V^v)$ of $\fg^r_d$s on the components 
$Z_v$ of $X_0$, Eisenbud and Harris define the tuple to be a \textbf{limit 
linear series} if the following condition is satisfied: for every node of 
$X_0$, given as $Z_v \cap Z_{v'}$, write $a^v_0,\dots,a^v_r$ and 
$a^{v'}_0,\dots, a^{v'}_r$ for the vanishing sequences of $(\sL^v,V^v)$ 
(respectively, $(\sL^{v'},V^{v'})$) at the node in question; then we 
require
\begin{equation}\label{eq:eh}
a^v_j + a^{v'}_{r-j} \geq d
\end{equation}
for $j=0,\dots,r$.

Our generalized definition builds on this by replacing the vanishing
sequence with a ``multivanishing sequence'' which keeps track of 
vanishing at several points at a time, and by adding a gluing condition
on the spaces $V^v$, which is vacuously satisfied in the compact type
case. There are additional complications arising from keeping track of
potential chains of rational curves inserted at each node, but we illustrate 
the main ideas in the simplest case, where we have two components, and do
not insert any additional rational curves. 

Some preliminary definitions are as follows.

\begin{notn} Let $X$ be a smooth projective curve, $D$ an effective
divisor on $X$, and $(\sL,V)$ a $\fg^r_d$ on $X$. Then we denote by
$V(-D)$ the space $V \cap H^0(X,\sL(-D))$.
\end{notn}

\begin{defn}\label{def:multivanishing} Let $X$ be a smooth projective
curve, $r,d \geq 0$, and $D_0 \leq D_1 \leq \dots \leq D_{b+1}$ a 
sequence of effective divisors on $X$, with $D_0=0$ and $\deg D_{b+1}>d$. 
Given $(\sL,V)$ a $\fg^r_d$ on $X$, define the
\textbf{multivanishing sequence} of $(\sL,V)$ along $D_{\bullet}$ to
be the sequence
$$a_0 \leq \dots \leq a_r$$
where a value $a$ appears in the sequence $m$ times if for some $i$ we
have $\deg D_i=a$, $\deg D_{i+1}>a$, and 
$\dim \left(V(-D_i)/V(-D_{i+1})\right)=m$.

Also, given $s \in V$ nonzero, define the \textbf{order of vanishing}
$\ord_{D_{\bullet}} s$ along $D_{\bullet}$ to be $\deg D_i$, where
$i$ is maximal so that $s \in V(-D_i)$.
\end{defn}

Thus, multivanishing sequences generalize usual vanishing sequences and
ramification, incorporating also geometric notions such as secancy
conditions (requiring two or more points to map to a single a point),
bitangency, and so forth. Similar conditions for the case of rational 
curves were studied by Garc\'ia-Puente \textit{et al} in \cite{g-h-h-m-r-s-t}.
In \cite{os23} we observe that the standard results on Brill-Noether theory
with imposed ramification generalize to imposed multivanishing sequences.

Note that due to our choice of indexing of the multivanishing sequence, adding
repeated divisors into $D_{\bullet}$ does not affect the sequence.

Now, suppose that $X_0$ is obtained by gluing together smooth curves
$Z_1$ and $Z_2$ at nodes $P_1,\dots,P_m$. Given $d>0$, let $d_1,d_2$ be 
positive integers such that there exists $b \geq 0$ with $d=d_1+d_2-bm$,
and suppose also that $d-d_i<m$ for $i=1,2$ (in the Eisenbud-Harris case,
we will have $d_1=d_2=b=d$). For $i=1,2$ and $0 \leq j \leq b+1$, set 
$D^i_j=j (P_1+\dots+P_m)$.
Now, suppose we are given $(\sL^i,V^i)$ a $\fg^r_{d_i}$ on $Z_i$ for $i=1,2$, 
and suppose we are also given gluing information $\vp$
for $\sL^1$ and $\sL^2$ at the nodes. Then we define the tuple 
$((\sL^1,V^1),(\sL^2,V^2),\vp)$ to be a limit linear series if the
following two conditions are satisfied:
\begin{Ilist}
\itm for $i=1,2$, write $a^i_0,\dots,a^i_r$ for the multivanishing sequence 
of $(\sL^i,V^i)$ along $D^i_{\bullet}$; then we require
\begin{equation}\label{eq:eh-2} a^1_j+a^2_{r-j} \geq bm\end{equation}
for $j=0,\dots,r$;
\itm
for $i=1,2$, there exist bases $s^i_0,\dots,s^i_r$ of the $V^i$ such that
$$\ord_{D_{\bullet}}s^i_{\ell} = a^i_{\ell} \quad \text{ for } 
\ell=0,\dots,r,$$
and for all $\ell$ with \eqref{eq:eh-2} an equality, we have
$$\vp(s^1_{\ell})=s^2_{r-\ell}.$$
\end{Ilist}
In the above, we have been a bit vague in discussing the gluing; this is
made fully precise in \S \ref{sec:two-comp} below. Then, in
\S \ref{sec:pseudocompact-type}, we generalize to the case of curves of
pseudocompact type, meaning that if we take the dual graph,
and collapse all multiple edges, we obtain a tree;\footnote{More precisely,
we obtain a tree from the dual graph by, for each pair of adjacent vertices
$v,v'$, replacing all edges connecting $v$ to $v'$ with a single edge.} 
see Figure \ref{fig}.
\begin{figure}\label{fig}
\centering
\includegraphics{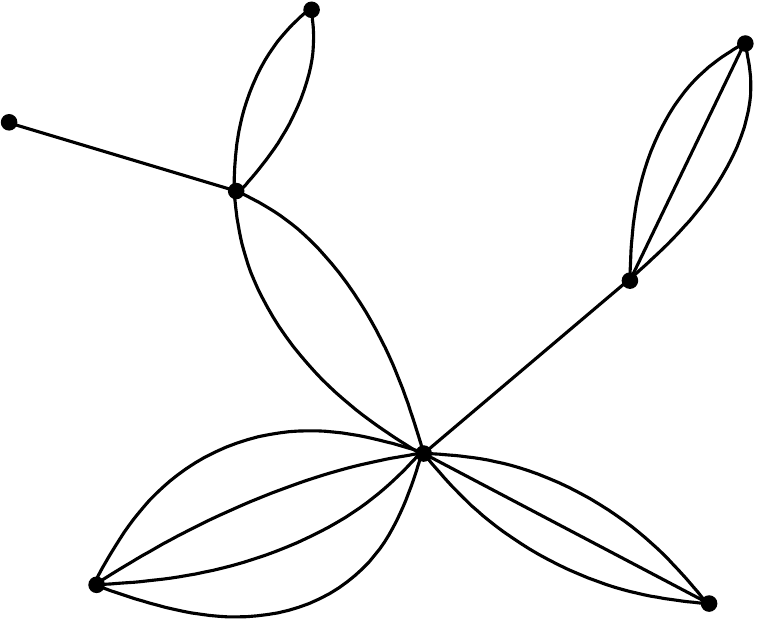}
\caption{A dual graph of a curve of pseudocompact type.}
\end{figure}
In addition to curves
of compact type, this includes interesting classes of curves such as 
curves with two components, and chains of curves of the sort considered 
by Cools, Draisma, Payne and Robeva in \cite{c-d-p-r}. One can think of
curves of pseudocompact type as being the most general class of curves
for which one can still analyze gluing conditions by looking at only
two components at a time. Note that there is a close parallel between
the above conditions (I) and (II) and the definition of limit linear
series for higher-rank vector bundles given by Teixidor i Bigas in
\cite{te1}. This parallel persists, albeit to a lesser extent, when
we allow insertions of chain of rational curves, and consider 
arbitrary curves of pseudocompact type. However, this is reflective
of the node-by-node aspect of the gluing conditions, and for arbitrary 
nodal curves the behavior is expected to be quite different.

Finally, in \S \ref{sec:smoothing} we prove the following smoothing 
theorem. We state it informally here, with a more precise statement as 
Theorem \ref{thm:smoothing} below.

\begin{thm}\label{thm:smoothing-intro}
If $X_0$ is a curve of pseudocompact type, and the space of limit linear
on $X_0$ has the expected dimension 
$$\rho:=g+(r+1)(d-r-g),$$ 
then every limit linear series
on $X_0$ can be smoothed to linear series on all nearby smooth curves. 
\end{thm}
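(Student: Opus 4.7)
The plan is a semicontinuity argument applied to the relative moduli space of limit linear series over a one-parameter smoothing, with the key input being a dimension lower bound coming from the determinantal nature of Definition \ref{def:lls}.

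First I would reduce to the case of a one-parameter smoothing $\pi \colon X \to B$, with $B = \Spec R$ for $R$ a discrete valuation ring, the total space $X$ regular, special fiber $X_0$, and smooth generic fiber $X_\eta$. The paper has already produced the relative moduli space $\cG^r_d \to B$ of limit linear series: its special fiber is by construction the moduli space of limit linear series on $X_0$, which has dimension $\rho$ by hypothesis, while its generic fiber is the classical $G^r_d(X_\eta)$, also of dimension $\rho$. I would then work with $\cG^r_d$ and argue that every limit linear series on $X_0$ lies in a component of $\cG^r_d$ that dominates $B$.

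The heart of the argument is the lower bound: every irreducible component of $\cG^r_d$ meeting the special fiber has dimension at least $\rho + 1$. Because Definition \ref{def:lls} realizes limit linear series as a generalized determinantal locus inside an ambient parameter space (relative Picard schemes for the components of $X_0$, twisted by a choice of multidegree, together with a relative Grassmannian of sections, plus combinatorial factors for inserted chains of rational bridges), this bound reduces to a codimension estimate: the rank conditions carving out the limit linear series locus have expected codimension at most $(r+1)(g-d+r)$, and every component of a determinantal locus has codimension bounded above by the expected one. Adding the one-dimensional base contribution yields $\rho + 1$. The pseudocompact hypothesis enters here, because it is precisely under this restriction that the multivanishing-plus-gluing conditions of the generalized Eisenbud--Harris formulation from \S \ref{sec:pseudocompact-type} decouple node-by-node into determinantal pieces of the correct expected codimension.

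Granted the lower bound, the conclusion is formal. If some irreducible component $C \subseteq \cG^r_d$ meeting the special fiber failed to dominate $B$, it would be contained in the special fiber, forcing $\dim C \leq \rho$, in contradiction with $\dim C \geq \rho + 1$. Hence every such component is flat over $B$ of relative dimension $\rho$. Combined with properness of $\cG^r_d \to B$, which I would verify via the valuative criterion using the specialization theorem to lift generic linear series to limit linear series on $X_0$, every point of the special fiber arises as the specialization of a $B$-family of genuine linear series on nearby smooth fibers, which is exactly the smoothing assertion.

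The \textbf{main obstacle} is the determinantal dimension lower bound. One has to verify that the rank conditions encoding both the multivanishing data (in the sense of Definition \ref{def:multivanishing}) and the gluing $\vp$ along inserted rational chains assemble into a single determinantal description of the correct expected codimension, and that the auxiliary parameters associated with chain insertions are counted consistently across nodes of the dual graph. A secondary subtlety is ensuring properness of $\cG^r_d \to B$: after fixing multidegree and chain-insertion data on $X_0$, each fiber is proper, but globally one may need to pass to a finite cover of $B$ to absorb residual twist ambiguity before the valuative criterion delivers the lifting used above. Once these two pieces are in hand, the semicontinuity argument of the previous paragraph gives the theorem.
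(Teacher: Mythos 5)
Your overall skeleton---pass to the relative moduli space over a one-parameter smoothing, prove every component meeting the special fiber has dimension at least $\rho+1$, and conclude by semicontinuity that no component sits inside the special fiber---is the same as the paper's, and the final deduction of smoothing from the dimension bound is unproblematic. The genuine gap is the step you yourself flag as the ``main obstacle'' and then assert rather than prove: the codimension estimate. The limit linear series locus is \emph{not} a single determinantal locus. By Definition \ref{def:lls} it is the intersection of the $(r+1)$st vanishing loci of the maps \eqref{eq:gluing-map-1} over all $w \in V(G(w_0))$ (or, after Proposition \ref{prop:compare-restrict} and Theorem \ref{thm:equiv}, over $V(\bar{G}(w_0))$, whose size grows with the chain structure $\bn$ and the distances $b_{v,v'}$ between concentrated multidegrees). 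The standard fact that ``every component of a determinantal locus has codimension at most the expected one'' applies to each such condition separately, and summing those bounds over the many multidegrees gives an estimate proportional to the number of multidegrees---far too weak to yield dimension at least $\rho$. There is no single bundle map whose degeneracy locus is the whole intersection, so the classical bound does not deliver your claim. Nor does the pseudocompact-type ``node-by-node decoupling'' via the multivanishing/gluing formulation rescue it: condition (c) of Theorem \ref{thm:equiv} is likewise a web of many rank conditions, and in fact the paper's proof deliberately does not route through it (see Remark \ref{rem:smooth-genl}); pseudocompactness enters only through the count $|E(\bar{\Gamma})|=|V(\Gamma)|-1$ and the edge-by-edge reduction.

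What actually closes the gap in the paper is a different presentation of the moduli space plus a new codimension theorem. One re-embeds the space into a product over $v \in V(\Gamma)$ of Grassmannian bundles $G(r+1,p_{1*}\widetilde{\sM}_{w_v}(D))$ over $\Pic^{w_0}$, cut out by (i) vanishing of each $\sV_v$ along $D_v$ and (ii) for each edge of $\bar{\Gamma}$, a \emph{linked determinantal locus} attached to the entire chain of sheaves $p_{1*}\widetilde{\sM}_w(D)$ for $w$ between $w_v$ and $w_{v'}$; even the set-theoretic identification of this locus with the limit linear series space is a nontrivial argument using the equivalence of (a) and (b) in Theorem \ref{thm:equiv}. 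The decisive input is then Theorem \ref{thm:link-det}: the whole chain of rank conditions along one edge imposes codimension at most $r(d-r)$, i.e.\ no more than a single determinantal condition, and its proof needs genuinely new ingredients (the linked Grassmannian dimension theorem of \cite{os8}, smoothness of the universal matrix-pair scheme via \cite{st4}, and a universal-case pullback). With one such condition per edge of the multitree, the count comes out to exactly $\rho$. Without an argument of this kind, your proposal assumes precisely the heart of the theorem.
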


As mentioned above, in comparison to the smoothing theorem of 
Eisenbud-Harris, our result is stronger because it is not confined to
the open subset of refined limit linear series.
The main tool in the proof of Theorem \ref{thm:smoothing-intro} is the
theory of linked determinantal loci, which we develop in Appendix 
\ref{sec:link-det}.

We conclude with a brief explanation of some of the decisions behind our
definitions. First, we originally intended to use torsion-free sheaves
to treat specializations, rather than allowing the insertion of rational 
curves at nodes. However, we discovered that from this point of view, 
important gluing conditions are omitted, and as a result, the spaces may 
no longer have the correct dimension.
Next, of course in a general theory of limit
linear series, in principle one does not need to treat inserted rational
chains differently from other components. However, there are two compelling 
reasons for doing so. The first is that it keeps the amount of data more
manageable; for instance, in the two-component case, we can study limit
linear series in general without having to remember more than two linear
series, one for each of the original components. The other reason is that
the pseudocompact type condition is not preserved under insertion of 
rational curves at nodes, so our second definition would not be complete
(for instance, with respect to specialization results) if we did not have
a system for keeping track of inserted rational curves. In addition, our
approach is very convenient for working with non-regular smoothing families.
The final comment
is that we have not, for the moment, pursued the possibility of creating
a single proper moduli space of limit linear series using the quasistable
curve compactification of the Picard variety. This is a natural and
worthwhile direction to pursue, but because Eisenbud and Harris were able
to carry out all their applications without a compact moduli space (using
instead a specialization result analogous to our Corollary 
\ref{cor:specialize}), it does not seem to be crucial to the basic theory.

\subsection*{Acknowledgements}

I would like to thank Eduardo Esteves for many helpful conversations,
particularly in relation to chain structures and admissible multidegrees.
I would also like to thank Frank Sottile for drawing my attention to
\cite{g-h-h-m-r-s-t}, and Ulrich Goertz for his assistance with the proof
of Proposition \ref{prop:matrix-prod-smooth}.

\subsection*{Conventions}

All curves we consider are assumed proper, (geometrically) reduced and
connected, and at worst nodal. All nodal curves are assumed to be split, 
meaning that both its nodes and irreducible components are all defined 
over the base field. Furthermore, to reduce clutter, we assume that all 
irreducible components are smooth; see Remark \ref{rem:self-nodes}.

A graph by default is allowed to have multiple edges, but not, in
accordance with the above, loops.

\section{Fundamental definitions}

We begin with some definitions of a combinatorial nature. In the below,
$\Gamma$ will be obtained by choosing a directed structure on the dual
graph of a projective nodal curve. We assume we have:

\begin{sit}\label{sit:gamma} Let $\Gamma$ be a directed graph without
loops. For each pair of an edge $e$ and adjacent vertex $v$ of $\Gamma$, 
let $\sigma(e,v)=1$ if $e$ has tail $v$, and $-1$ if $e$ has head $v$.
\end{sit}

The following definitions form the basis for our approach to keeping 
track of chains of rational curves inserted at the nodes of the original
curve.

\begin{defn} A \textbf{chain structure} on $\Gamma$ is a function 
$\bn:E(\Gamma) \to \ZZ_{> 0}$. A chain structure is \textbf{trivial}
if $\bn(e)=1$ for all $e \in E(\Gamma)$.
\end{defn}

The chain structure will determine the length of the chain of rational
curves inserted at a given node; for reasons of later convenience, the
trivial case (in which no rational curves are inserted) corresponds to
$\bn(e)=1$.

\begin{defn} Given $\bn$ a chain structure on $\Gamma$, an 
\textbf{admissible multidegree} $w$ of total 
degree $d$ on $(\Gamma,\bn)$ consists of a function 
$w_{\Gamma}: V(\Gamma) \to \ZZ$ together with a tuple
$(\mu(e))_{e \in E(\Gamma)}$, where each $\mu(e) \in \ZZ/\bn(e)\ZZ$,
such that
$$d = \#\{e \in E(\Gamma): \mu(e) \neq 0\}
+ \sum_{v \in V(\Gamma)} w_{\Gamma}(v).$$
\end{defn}

The idea behind admissible multidegrees is that in order to extend line
bundles, we need only consider multidegrees which have degree $0$ or $1$
on each rational curve inserted at the node, with degree $1$ occurring at
most once in each chain. Thus, $\mu(e)$ determines where on the chain
(if anywhere) positive degree occurs. 
See Definition \ref{def:multideg-lb} below for details.

\begin{defn} Given a chain structure $\bn$ on $\Gamma$,
let $w$ be an admissible multidegree. Given also
$v \in V(\Gamma)$, the \textbf{twist} of $w$ at $v$ is obtained as follows:
for each $e$ adjacent to $v$, increase $\mu(e)$ by $\sigma(e,v)$. Now,
decrease $w_{\Gamma}(v)$ by the number of $e$ for which $\mu(e)$ had
been equal to $0$, and for each $e$, if the new $\mu(e)$ is zero, increase
$w_{\Gamma}(v')$ by $1$, where $v'$ is the other vertex adjacent to $v$.
The \textbf{negative twist} of $w$ at $v$ is the admissible multidegree
$w'$ such that the twist of $w'$ at $v$ is equal to $w$.
\end{defn}

Twists will be the change in multidegrees accomplished by twisting by
certain natural line bundles; see Notation \ref{not:twist} below.

\begin{ex}\label{ex:twists-trivial} In the case of trivial chain structure,
a twist at $v$ simply reduces $w_{\Gamma}(v)$ by the valence of 
$v$ while increasing $w_{\Gamma}(v')$ by the number of edges connecting
$v'$ to $v$, for each $v' \neq v$. This is the same as the chip firing 
considered by Baker and Norine in \cite{b-n1}.
\end{ex}

\begin{rem}\label{rem:twists} Given $\Gamma$ and $\bn$, let 
$\widetilde{\Gamma}$ be the (directed) graph obtained from $\Gamma$ 
by subdividing each edge $e$ into $\bn(e)$ edges. Thus, we have
a natural inclusion $V(\Gamma) \subseteq V(\widetilde{\Gamma})$.
Then if $w$ is an admissible multidegree for $(\Gamma,\bn)$, we obtain 
a weight function $\widetilde{w}:V(\widetilde{\Gamma}) \to \ZZ$ on 
$\widetilde{\Gamma}$ (which we think of as being a multidegree for the
trivial chain structure) by setting $\widetilde{w}(v)=w_{\Gamma}(v)$ for
all $v \in V(\Gamma)$, and setting $\widetilde{w}(v)=0$ for all $v
\not \in V(\Gamma)$, unless $v$ lies over an edge $e$ of $\Gamma$, and
is the $\mu(e)$th new vertex lying over $e$. In the latter case, we set
$\widetilde{w}(v)=1$.

Thus, admissible multidegrees for $(\Gamma,\bn)$ are imbedded into the
set of multidegrees on $\widetilde{\Gamma}$, and this imbedding is
compatible with twists as follows: twisting $w$ at $v \in V(\Gamma)$ is
the same as twisting $\widetilde{w}$ by $v$, and then also by all
new vertices between $v$ and the $\sigma(e,v)\mu(e)$th new vertex lying over 
$e$, for each $e \in E(\Gamma)$ adjacent to $v$.
In the above, we take the representative of $\sigma(e,v)\mu(e)$ between $0$ 
and $\bn(e)-1$.
See also Notation \ref{not:twist} below for the geometric version
of this statement.
\end{rem}

\begin{ex}\label{ex:twists-2-comps}
In the two-component case, with components $v_1$ and $v_2$, and edges
oriented from $v_1$ to $v_2$, we describe twists in terms of 
multidegrees on $\widetilde{\Gamma}$ as in Remark \ref{rem:twists}.
The idea is that twisting by $v_1$ moves the positive-degree new vertices
away from $v_1$ and towards $v_2$.
Specifically, when twisting $w$ at $v_1$, for each $e \in E(\Gamma)$,
the degree-$1$ new vertex over $e$ shifts by one away from $v_1$.
If the vertex with degree $1$ is already adjacent to $v_2$, then
the degree on $v_2$ is increased, and no new vertices over $e$ will have
positive degree. If no new vertices over $e$ have degree $1$, then
the degree on $v_1$ is decreased, and the first new vertex over $e$ is
given degree $1$.
\end{ex}

Note that twists are invertible, since twisting at every vertex of 
$\Gamma$ returns to the initial multidegree. Thus, the negative twist
at $v$ can be expressed also as the composition of the twists at all
$v' \neq v$. We will primarily be interested in (positive) twists, but
the utility for us of negative twists is in the following definition.

\begin{defn} An admissible multidegree $w$ is \textbf{concentrated} at
a vertex $v \in V(\Gamma)$ if there is an ordering on $V(\Gamma)$ starting
with $v$, and such that for each subsequent vertex $v'$, we have that $w$
becomes negative in index $v'$ after taking the composition of the negative 
twists at all the previous vertices.
\end{defn}

A more canonical condition which
implies concentration (but is in general strictly stronger) is that 
for all $v' \neq v$, and all $v''$ adjacent to $v'$, the negative twist
of $w$ at $v''$ is negative in index $v'$. We have elected to use the above
definition as the most general for which one can make the argument of 
Proposition \ref{prop:concentrated-inject} below.

\begin{ex}\label{ex:concentrate} The concentration condition is the
generalization of the multidegrees considered by Eisenbud and Harris
in the compact type case, where they had degree $d$ on one component,
and degree $0$ on all the others. 

In our generalized setting, $w$ will be concentrated at $v$ if it is 
negative on all $v' \neq v$. If the chain structure is trivial, it is
enough to have degree at most $0$ at all $v' \neq v$, but in general
this is not the case, since with nontrivial chain structures, a negative 
twist at $v''$ adjacent to $v'$ need not reduce the degree on $v'$. 

However, at the opposite extreme, even with nontrivial chain structures
we can have a multidegree simultaneously concentrated at two adjacent
vertices. For instance, if $\Gamma$ has only two vertices, connected by
$n$ edges, then a multidegree which is strictly less than $n$ on each
vertex, and with $\mu(\bullet)$ identically zero, will be concentrated
on both vertices. This corresponds to usual linear series (of restricted
multidegrees) on the relevant two-component curves.
\end{ex}

See also Remark \ref{rem:v-reduced} below for further comments on the
role of the concentration condition.

\begin{prop}\label{prop:concentrate}
Given any admissible multidegree $w$, and any $v \in V(\Gamma)$, there
exists an admissible multidegree $w'$, concentrated at $v$, and obtained
from $w$ by repeated twisting at vertices $v'$ other than $v$.
\end{prop}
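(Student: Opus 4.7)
The plan is to construct $w'$ by an iterative algorithm that only applies positive twists at vertices $v' \neq v$, so that reachability is automatic. The target is to reach a multidegree $w'$ that is \emph{generalized $v$-reduced}: for every non-empty $S \subseteq V(\Gamma) \setminus \{v\}$, the composition of positive twists at the vertices of $S$ yields an admissible multidegree strictly negative at some $v' \in S$. (Here I use that positive twists at distinct vertices commute, which follows from Remark \ref{rem:twists} together with commutativity of chip-firing on $\widetilde{\Gamma}$.) From generalized $v$-reducedness, concentration at $v$ follows by iterated Dhar burning: inductively pick $v_i \in S_i := V(\Gamma) \setminus \{v_0, \ldots, v_{i-1}\}$ (with $v_0 := v$) whose $w'_\Gamma$-value becomes negative after applying the positive twists at $S_i$, and observe via the identity $\prod_{u \in V(\Gamma)} T_u = \id$ that the composition of positive twists at $S_i$ equals the composition of negative twists at $v_0, \ldots, v_{i-1}$, matching the concentration definition exactly.

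The algorithm itself follows the template of Baker and Norine's construction of $v$-reduced divisors. First, I would show that by applying enough positive twists at each $v' \neq v$ (specifically, $\bn(e)$ twists at $v'$ effectively transfer one chip across edge $e$), one can reach a state $w^{\dagger}$ with $w^{\dagger}_\Gamma(v') \geq 0$ for all $v' \neq v$; this reduces to standard effectivity results for chip-firing orbits on the underlying graph. Next, I would iteratively apply positive twists at non-empty subsets $S \subseteq V(\Gamma) \setminus \{v\}$ chosen by a Dhar-type burning rule, monitoring a non-negative integer-valued energy such as $E(w') := \sum_{v' \neq v} w'_\Gamma(v') \cdot d(v, v')$, where $d$ denotes graph distance in $\widetilde{\Gamma}$. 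Each step either certifies generalized $v$-reducedness or strictly decreases $E$, so termination gives a $v$-reduced $w'$. The strict decrease comes from the fact that firing $S$ moves chips from $S$ toward inner boundary vertices, which are strictly closer to $v$, while the $\geq 0$ constraint off $v$ is preserved by the choice of $S$.

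The main obstacle is the chain structure: a single application of positive twists at $S$ changes $w_\Gamma$-values by amounts depending on the current $\mu(e)$-values at boundary edges, and might only advance $\mu$ along a chain without actually transferring a chip across any edge. I would address this by iterating the firing at $S$ until some $\mu$-cycle completes (at most $\max_e \bn(e)$ iterations), after which a chip must cross and the strict decrease of $E$ is realized. Care is also needed to confirm that the initial step (reaching a state $w^{\dagger}$ non-negative off $v$) can be accomplished using only positive non-$v$ twists, but this should follow from the effective version of Baker--Norine applied after sufficiently many chain-completing iterations. Throughout, every elementary move is a positive twist at some $v' \neq v$, so the resulting $w'$ is automatically reachable from $w$ in the manner required.
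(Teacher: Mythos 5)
Your final reduction is fine: the identity that the composition of negative twists at a set of vertices equals the composition of positive twists at its complement (which is also the opening observation of the paper's own proof) does convert your ``generalized $v$-reduced'' condition into the definition of concentration. The gap is upstream, in the moves you allow yourself. Your first step claims that one can always reach a state $w^{\dagger}$ with $w^{\dagger}_{\Gamma}(v')\geq 0$ for all $v'\neq v$ using only positive twists at vertices other than $v$; this is false. A positive twist at $v'$ never increases $w_{\Gamma}(v')$, and increases accrue only to the \emph{other} endpoints of the edges involved. So take $\Gamma$ with two vertices $v,u$ joined by a single edge with $\bn(e)=1$, and $w_{\Gamma}(u)<0$: the only permitted move is twisting at $u$, which strictly decreases $w_{\Gamma}(u)$, so no multidegree nonnegative away from $v$ is reachable. (The ``standard effectivity results'' you invoke concern the full linear equivalence class, i.e.\ they allow firing $v$ as well, which here is exactly what is forbidden.) The same difficulty recurs for any leaf of $\bar\Gamma$ attached only to $v$. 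A secondary problem is your termination argument: the energy $E(w')=\sum_{v'\neq v} w'_{\Gamma}(v')\,d(v,v')$ is not monotone under firing a Dhar-selected set $S$, since chips crossing the boundary of $S$ may land on vertices \emph{farther} from $v$ (already in the classical, trivial chain structure case one can arrange for $E$ to stay constant or even increase), so ``each step strictly decreases $E$'' would need a different potential or the usual stabilization-by-distance argument.

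The underlying misdirection is that reduced-divisor theory aims at effectivity away from $v$, whereas concentration at $v$ is (essentially) the opposite condition: by Example \ref{ex:concentrate}, it suffices to make $w_{\Gamma}(v')$ \emph{negative} for all $v'\neq v$, and negativity off $v$ is easy to achieve with the allowed moves. This is exactly what the paper does: writing $\Gamma_{v,n}$ for the ball of radius $n$ around $v$, it takes many negative twists at all vertices of $\Gamma_{v,N}$ (equivalently, positive twists at the complement, which avoids $v$) to force negativity on the outermost shell, then proceeds inward shell by shell, the inner steps not disturbing the outer shells since vertices interior to the fired set are unaffected. Note also that ``negative at every $v'\neq v$'' already implies your generalized $v$-reduced condition (firing any $S\not\ni v$ leaves every vertex of $S$ with degree at most its old, negative, value), so the detour through effectivity and burning buys nothing here and, with only positive non-$v$ twists available, cannot be carried out in general.
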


\begin{proof} First note that the composition of negative twists over a 
collection $S$ of vertices of $\Gamma$ is equivalent to the composition of
(positive) twists over the complement of $S$.
For each $n \geq 0$, let
$\Gamma_{v,n}$ denote the subset of $V(\Gamma)$ consisting of all vertices
$v'$ such that there is a path (undirected) in $\Gamma$ of length less than 
or equal to $n$ from $v$ to $v'$. Let $N$ be maximal such that 
$\Gamma_{v,N} \subsetneq V(\Gamma)$. Taking sufficiently many negative
twists of $w$ at all vertices of $\Gamma_{v,N}$, we can achieve negative
degrees at all vertices of $V(\Gamma) \smallsetminus \Gamma_{v,N}$.
Repeating this process for $\Gamma_{v,N-1}$ achieves negative degree
on $\Gamma_{v,N} \smallsetminus \Gamma_{v,N-1}$ without affecting the
degree on $V(\Gamma) \smallsetminus \Gamma_{v,N}$, and continuing in
this way down to $\Gamma_{v,0}$, we achieve negative degree at all
vertices other than $v$, which in particular implies concentration at $v$.
\end{proof}

The following directed graph keeps track of all the multidegrees we will 
want to consider starting from any one admissible multidegree.

\begin{notn} Let $G(w_0)$ be the directed graph with vertex set
$$V(G(w_0)) \subseteq \ZZ^{V(\Gamma)} \times \prod_{e \in E(\Gamma)} 
\ZZ/\bn(e)\ZZ$$
consisting of all admissible multidegrees obtained from $w_0$ by sequences 
of twists, and with an edge from $w$ to $w'$ if $w'$ is obtained from $w$ 
by twisting at some vertex $v$ of $\Gamma$.

Given $w \in V(G(w_0))$ and $v_1,\dots,v_m \in V(\Gamma)$ (not necessarily
distinct), let $P(w,v_1,\dots,v_m)$ denote the path in $V(G(w_0))$ obtained
by starting at $w$, and twisting successively at each $v_i$.
\end{notn}

By the invertibility of 
twists, $G(w_0)=G(w)$ if and only if $w \in G(w_0)$. While our directed 
structure on $\Gamma$ is just a convenience, the directedness of $G(w_0)$
is crucial. Although it is not
important for our present purposes, we also mention that $G(w_0)$ can be 
expressed as the collection of admissible multidegrees which are linearly
equivalent to $w_0$ on $\widetilde{\Gamma}_0$, using the theory of
linear equivalence on graphs as developed by Baker and Norine in 
\cite{b-n1}.

Also, note that $P(w,v_1,\dots,v_m)$ is independent of the ordering of the
$v_i$.

\begin{prop}\label{prop:paths-unique} If $P(w,v_1,\dots,v_m)$ is a minimal
path in $G(w_0)$ from $w$ to some $w'$, then $m$ and the $v_i$ are 
uniquely determined up to reordering. 

More generally, paths $P(w,v_1,\dots,v_m)$ and $P(w,v'_1,\dots,v'_{m'})$
have the same endpoint if and only if the multisets of the $v_i$ and the 
$v'_i$ differ by a multiple of $V(\Gamma)$.
\end{prop}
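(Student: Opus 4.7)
The plan is to exploit the commutativity of twists and the observation (made just before the proposition) that twisting once at every vertex of $\Gamma$ returns to the initial multidegree. Together these imply that the endpoint of $P(w,v_1,\ldots,v_m)$ depends only on the count function $f\colon V(\Gamma)\to \ZZ_{\geq 0}$ with $f(v)=\#\{i:v_i=v\}$, and that the composition $\tau^f$ descends to a well-defined action of $\ZZ^{V(\Gamma)}/\ZZ\cdot\mathbf{1}$ on admissible multidegrees; this already gives the ``if'' direction of the second assertion.

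For the ``only if'' direction, I would suppose $\tau^f(w)=\tau^{f'}(w)$ and set $g=f-f'\in \ZZ^{V(\Gamma)}$, aiming to show $g$ is constant. The $\mu(e)$-component of $\tau^g$ receives a total integer shift of $g(\text{tail}(e))-g(\text{head}(e))$, so preservation of $\mu$ forces this shift to vanish modulo $\bn(e)$ for each edge; when it does, $\mu(e)$ makes $k_e := (g(\text{tail}(e))-g(\text{head}(e)))/\bn(e)$ signed wrap-arounds around $\ZZ/\bn(e)\ZZ$. By commutativity of twists, each such wrap contributes $-1$ to $w_\Gamma$ at one endpoint of $e$ and $+1$ at the other, independently of the starting state, so the net change satisfies $\Delta w_\Gamma = -Lg$ where $L$ is the weighted Laplacian of $\Gamma$ with edge weights $1/\bn(e)$. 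Thus $\tau^g(w)=w$ forces $g\in \ker L$, and since $\Gamma$ is connected (as the dual graph of a connected curve), $\ker L=\ZZ\cdot \mathbf{1}$, as desired.

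The minimal-path statement is then an immediate corollary: if $f,f'$ are the count functions of two minimal paths from $w$ to $w'$, the second assertion gives $f-f'=c\mathbf{1}$ for some $c\in\ZZ$, and minimality forces $\min f=0=\min f'$ (otherwise subtracting $\mathbf{1}_{V(\Gamma)}$ from the larger count function would shorten the path), hence $c=0$ and $f=f'$. The main obstacle is justifying the formula $\Delta w_\Gamma=-Lg$: although the effect of an individual twist on $w_\Gamma$ depends on the current state of $\mu$, the aggregate effect under the $\mu$-preservation hypothesis is this clean Laplacian expression. This can be checked either by the direct count of wrap-arounds sketched above or, more conceptually, by reducing to ordinary chip-firing on the subdivision $\widetilde{\Gamma}$ via Remark \ref{rem:twists} and invoking the classical description of the Laplacian kernel on a connected graph.
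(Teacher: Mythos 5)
Your argument is correct, but it follows a different route from the paper's. The paper handles the ``only if'' direction by reducing, via Remark \ref{rem:twists}, to the subdivided graph $\widetilde{\Gamma}$ with trivial chain structure, where twisting is ordinary chip-firing, and then proves that the all-ones vector spans the kernel of the resulting Laplacian-type matrix by a Perron--Frobenius argument; the minimal-path statement is left implicit. You instead work directly on $(\Gamma,\bn)$: using commutativity and invertibility of twists you reduce to showing that a signed count function $g$ acting trivially must be constant, observe that preservation of $\mu(e)$ forces $\bn(e)\mid g(\mathrm{tail}(e))-g(\mathrm{head}(e))$, and then show that the aggregate effect on $w_\Gamma$ is $-Lg$ for the $1/\bn(e)$-weighted Laplacian $L$, whose integer kernel on a connected graph is $\ZZ\cdot\mathbf{1}$. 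The crux, as you note, is the formula $\Delta w_\Gamma=-Lg$; your winding/crossing count per edge does justify it (the lifted walk of $\mu(e)$ has total displacement $g(\mathrm{tail}(e))-g(\mathrm{head}(e))$, and each net crossing of the gate at $0$ moves exactly one unit of degree from one endpoint to the other, the case $\bn(e)=1$ included), and one could streamline the bookkeeping by adding a large multiple of $\mathbf{1}$ to $g$ so that only positive twists occur. What your approach buys is that it avoids the state-dependent correspondence of Remark \ref{rem:twists} between twists on $(\Gamma,\bn)$ and twists on $\widetilde{\Gamma}$, and it replaces Perron--Frobenius by the standard connectivity fact for weighted Laplacians; what the paper's reduction buys is that all chain-structure bookkeeping is outsourced to the remark, leaving only the classical unweighted statement. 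Your explicit deduction of the minimal-path claim (a minimal path must have some vertex with count zero, so the constant difference is zero) is a welcome detail the paper omits; your second suggested route, via $\widetilde{\Gamma}$, is essentially the paper's proof.
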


\begin{proof} We have already observed the ``if'' direction. For the
converse, in light of Remark \ref{rem:twists}
the desired statement for $\Gamma$ and $\bn$ follows from the 
same statement for the graph $\widetilde{\Gamma}$ constructed by subdividing
every edge $e$ of $\Gamma$ into $\bn(e)$ edges, with the trivial chain 
structure. 
We thus consider the matrix $M$ indexed by $V(\widetilde{\Gamma})$, with
$(v,v)$ entry given by the negative of the valence of $v$, and for 
$v \neq v'$, with $(v,v')$ entry given by the number of edges of 
$\widetilde{\Gamma}$ connecting $v$ to $v'$. We wish to see that the vector 
$(1,\dots,1)$ generates the kernel of $M$. If we consider $\epsilon M + I$, 
with $1/\epsilon$ at least the maximal valence in $\widetilde{\Gamma}$, we 
have a symmetric doubly stochastic matrix with nonnegative entries, which 
is irreducible because $\Gamma$ is connected.
The Perron-Frobenius theorem then implies
that the maximal eigenvalue is $1$, and is simple, which implies that 
the eigenvalue $0$ of $M$ is likewise simple, as desired.
\end{proof}

We now move on to definitions which involve geometry more directly.

\begin{sit}\label{sit:basic}
Let $X_0$ be a projective nodal curve, with dual graph
$\Gamma$, and choose an orientation on $\Gamma$. For $v \in V(\Gamma)$, 
let $Z_v$ be the corresponding irreducible component of $X_0$, and $Z_v^c$ 
the closure of the complement of $Z_v$ in $X_0$.
\end{sit}

A preliminary definition (see also Maino \cite{ma5}) is the following. 

\begin{defn}\label{def:enriched} If $X_0$ is a nodal curve with dual
graph $\Gamma$, an
\textbf{enriched structure} on $X_0$ consists of the data, for each 
$v \in V(\Gamma)$ of a line bundle $\sO_v$ on $X_0$,
satisfying the following conditions:
\begin{Ilist}
\itm for any $v \in V(\Gamma)$, we have
$$\sO_v|_{Z_v} \cong \sO_{Z_v}(-(Z_v^c \cap Z_v)),\text{ and }
\sO_v|_{Z_v^c} \cong \sO_{Z_v^c}(Z_v^c \cap Z_v);$$
\itm we have
$$\bigotimes_{v \in V(\Gamma)} \sO_v \cong \sO_{X_0}.$$
\end{Ilist}
\end{defn}

Note that it follows from the definitions that each $\sO_v$ has degree $0$.
Enriched structures always exist; they amount to suitable gluing choices at 
the nodes, and they are unique when $X_0$ is of compact type.
However,
an enriched structure is always induced by any regular smoothing of $X_0$; 
see Proposition \ref{prop:enriched-detd}.

We now explicitly introduce the chains of rational curves induced by a 
chain structure on $X_0$. 

\begin{defn}
Given $X_0$ and a chain structure $\bn$, let $\widetilde{X}_0$ denote 
the nodal curve obtained from $X_0$ by, for each $e \in E(\Gamma)$,
inserting a chain of $\bn(e)-1$ projective lines at the corresponding
node. Let $\widetilde{\Gamma}$ be the dual graph of $\widetilde{X}_0$,
with a natural inclusion $V(\Gamma) \subseteq V(\widetilde{\Gamma})$.
We refer to the new components of $\widetilde{X}_0$ as the 
\textbf{exceptional components}.
\end{defn}

Note that the above $\widetilde{\Gamma}$ is compatible with that of
Remark \ref{rem:twists}.

\begin{defn}\label{def:multideg-lb} Using our orientation of $E(\Gamma)$,
an admissible multidegree $w$ of total degree $d$ on $(X_0,\bn)$ gives a 
multidegree of total degree $d$ on $\widetilde{X}_0$
by assigning, for each $e \in E(\Gamma)$, 
degree $0$ on each component of the corresponding chain of projective curves,
except for degree $1$ on the $\mu(e)$th component when $\mu(e) \neq 0$. 
\end{defn}

The reason for restricting to such multidegrees is that extensions of
line bundles may always be chosen to have such degrees; see
Corollary \ref{cor:specialize}.

From now on, we will assume we have fixed an enriched structure together
with suitable global sections, as follows.

\begin{sit}\label{sit:enriched} In Situation \ref{sit:basic}, suppose
we have also a chain structure $\bn$ on $\Gamma$, and an enriched 
structure $(\sO_v)_v$ on the resulting $\widetilde{X}_0$, and for
each $v \in V(\widetilde{\Gamma})$, fix 
$s_v \in \Gamma(\widetilde{X}_0,\sO_v)$ vanishing precisely on $Z_v$.
\end{sit}

The sections $s_v$ will be convenient in describing maps between different 
twists of line bundles; they will not be unique even for curves of compact 
type, but in our case they are just a formal convenience, and do 
not ultimately affect our definition of limit linear series. See
Remark \ref{rem:enriched} for further discussion. 

We next describe how, given an enriched structure on $\widetilde{X}_0$, and 
a line bundle $\sL$ of multidegree $w_0$, we get a collection of line bundles
indexed by $V(G(w_0))$, with morphisms between them indexed by $E(G(w_0))$.

\begin{notn}\label{not:twist} In Situation \ref{sit:enriched} assume we
are given also an admissible multidegree $w_0$ on $(\Gamma,\bn)$. Then
for any edge $\e \in E(G(w_0))$, starting at 
$w=(w_{\Gamma},(\mu(e))_{e \in E(\Gamma)})$ and determined by twisting at
$v \in V(\Gamma)$, we have the corresponding twisting line bundle 
$\sO_{\e}$ on $\widetilde{X}_0$ defined as
$$\sO_{\e}=\sO_v \otimes \bigotimes_{e \in E(\Gamma)} 
\bigotimes_{i=1}^{\sigma(e,v)\mu(e)} \sO_{v_{e,i}},$$
where the first product is over edges $e$ adjacent to $v$, and for any
such pair, $v_{e,i}$ denotes the $i$th rational curve in 
$\widetilde{X}_0$ from $Z_v$ on the chain corresponding to $e$.

In addition, we have the section $s_{\e}$ of $\sO_{\e}$ obtained from
the tensor product of the relevant sections $s_v$ and $s_{v_{e,i}}$.

Similarly, given $w,w' \in V(G(w_0))$, let $P=(\e_1,\dots,\e_m)$ be
a minimal path from $w$ to $w'$ in $G(w_0)$, and set
$$\sO_{w,w'}=\bigotimes_{i=1}^m \sO_{\e_i}.$$
\end{notn}

In Notation \ref{not:twist}, if $\mu(e)=0$, the product over
$i$ is empty for the given $e$, and we take the
representative of $\sigma(e,v)\mu(e)$ between $0$ and $\bn(e)-1$.
Note that it follows from Proposition \ref{prop:paths-unique} that
the constructions of Notation \ref{not:twist} are independent of choices
of (minimal) paths. The reason for the notation $\sO_{w,w'}$ is that,
as one can easily verify, tensoring by $\sO_{w,w'}$ take a line bundle of
multidegree $w$ to one of multidegree $w'$.

\begin{notn}\label{not:more-twist}
In Situation \ref{sit:enriched}, suppose $\sL$ is a line bundle on
$\widetilde{X}_0$ of multidegree $w_0$. Then for any $w \in V(G(w_0))$, set
$$\sL_w := \sL \otimes \sO_{w_0,w}.$$

Given also $w_v \in V(G(w_0))$ concentrated at $v$, set
$$\sL^v:=\sL_{w_v}|_{Z_v}.$$

Given an edge $\e$ from $w$ to $w'$ in $G(w_0)$, corresponding to
twisting at $v$, then either
$\sL_{w'}=\sL_w \otimes \sO_{\e}$, or 
$\sL_w = \sL_{w'} \otimes \sO_{w',w}$. In the former case, we get a morphism
$\sL_w \to \sL_{w'}$ induced by $s_{\e}$. In the latter case, we observe
that $\sO_{w',w} \otimes \sO_{\e} \cong \sO_{\widetilde{X}_0}$, and
fixing such an isomorphism and again using $s_{\e}$ gives an induced 
morphism
$$\sL_w \to \sL_w \otimes \sO_{\e} = 
\sL_{w'}\otimes \sO_{w',w} \otimes \sO_{\e} \cong \sL_{w'}.$$
In either case, pushing forward gives an induced morphism
$$f_{\e}:\Gamma(\widetilde{X}_0, \sL_w) \to 
\Gamma(\widetilde{X}_0, \sL_{w'}).$$

Finally, if $P=(\e_1,\dots,\e_m)$ is any path in $G(w_0)$, set
$$f_P:=f_{\e_m} \circ \cdots \circ f_{\e_1}.$$
If $P$ is a minimal path from $w$ to $w'$, write
$$f_{w,w'}:=f_P.$$
\end{notn}

We have the following simple consequence of Proposition 
\ref{prop:paths-unique}:

\begin{cor}\label{cor:maps-defined} For any $w,w' \in V(G(w_0))$, the
morphism $f_{w,w'}$ is independent of the choice of minimal path.
\end{cor}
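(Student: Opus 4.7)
The plan is to combine the uniqueness statement of Proposition~\ref{prop:paths-unique} with a commutativity argument, ultimately reducing to the symmetry of tensor products of sections. By Proposition~\ref{prop:paths-unique}, any two minimal paths from $w$ to $w'$ differ only by a reordering of the underlying multiset of twists, so, after writing any permutation as a product of adjacent transpositions, the claim reduces to showing that $f_P$ is invariant under swapping two consecutive twists in a minimal path.

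The first main step is to pass from $(\Gamma,\bn)$ to the trivial chain structure on $\widetilde{\Gamma}$. By Remark~\ref{rem:twists}, a single twist at $v \in V(\Gamma)$ of an admissible multidegree on $(\Gamma,\bn)$ unfolds, under the embedding into multidegrees on $\widetilde{\Gamma}$, into a sequence of single-vertex twists at $v$ and at certain new vertices on the adjacent exceptional chains. Unwinding the definitions of $\sO_{\e}$ and $s_{\e}$ in Notation~\ref{not:twist}, the map $f_{\e}$ agrees with the composition of the corresponding sequence of single-vertex twist maps on $\widetilde{X}_0$ computed on $\widetilde{\Gamma}$ with the trivial chain structure. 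Thus any path $P$ in $G(w_0)$ lifts to a path $\widetilde{P}$ in $G(\widetilde{w}_0)$ of the same length with $f_P = f_{\widetilde{P}}$; two reorderings of $P$ lift to two paths on $\widetilde{\Gamma}$ with the same endpoints and the same length, which by Proposition~\ref{prop:paths-unique} applied on $\widetilde{\Gamma}$ involve the same multiset of vertices of $\widetilde{\Gamma}$.

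It therefore suffices to treat the case of trivial chain structure. In this setting, each edge $\e$ from $w$ to $w'$ in $G(w_0)$ corresponds to a single twist at some $v \in V(\Gamma)$, with $\sO_{\e} = \sO_v$ and $s_{\e} = s_v$, and $f_{\e}$ is induced by multiplication by $s_v$. Hence for any path $P = (\e_1,\dots,\e_m)$ with corresponding vertices $v_1,\dots,v_m$, the composition $f_P$ is induced by multiplication by the tensor product $\bigotimes_{i=1}^m s_{v_i}$. Under the canonical symmetry isomorphism of tensor products, this section is identified with the analogous section for any reordering of the $v_i$, yielding the desired independence.

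The main place to take care is the bookkeeping in the reduction to the trivial chain structure: one must verify that the definitions of $\sO_{\e}$ and $s_{\e}$, which depend on the $\mu(e)$-values at the source of $\e$, are genuinely compatible with the unfolding of Remark~\ref{rem:twists}. This verification is essentially formal from Notation~\ref{not:twist}, but it is where all the combinatorics of chain insertions enters, and it is what guarantees that the length of the unfolded path on $\widetilde{\Gamma}$ depends only on the endpoints and not on the chosen ordering of the big twists on $\Gamma$.
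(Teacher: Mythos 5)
Your argument is correct and is essentially the paper's: the paper offers no written proof beyond declaring the corollary a simple consequence of Proposition \ref{prop:paths-unique}, and the intended content is exactly what you spell out, namely that minimal paths differ only by a reordering of the twists and each $f_{\e}$ is multiplication by a section (of the enriched-structure bundles), so the composite is multiplication by an order-independent tensor product. One small slip in your bookkeeping: the unfolded path on $\widetilde{\Gamma}$ coming from Remark \ref{rem:twists} is in general \emph{longer} than $P$ (a twist at $v$ contributes additional twists at new vertices, the number depending on the current $\mu(e)$'s), so rather than claiming ``same length'' you should observe that the two unfolded paths have the same endpoints and that both of their multisets omit any vertex of $\Gamma$ absent from the common multiset of the original minimal paths (such a vertex exists by minimality), whence Proposition \ref{prop:paths-unique} applied on $\widetilde{\Gamma}$ forces the two unfolded multisets to be equal.
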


We can now give the definition of a limit linear series. As mentioned 
previously, the idea is simply that a collection of $\fg^r_{d_v}$s on
the components $Z_v$ of $X_0$ should constitute a limit linear series
precisely when it is possible to use them to glue together an 
$(r+1)$-dimensional space of sections on all of $X_0$ in any desired 
multidegree.

\begin{defn}\label{def:lls}
Let $X_0$ be a projective nodal curve, $\bn$ a chain structure,
$w_0$ an admissible multidegree of total degree $d$ on $(X_0,\bn)$, and
$(\sO_v)_{v \in V(\Gamma)}$ an enriched structure on $\widetilde{X}_0$. 
Choose also a tuple $(w_v)_{v \in V(\Gamma)}$ of vertices of $G(w_0)$, with
each $w_v$ concentrated at $v$, and sections $(s_v)_v$ as in Situation 
\ref{sit:enriched}.  Then a \textbf{limit linear series}
on $(X_0,\bn)$ consists of a line bundle $\sL$ of multidegree $w_0$ on
$\widetilde{X}_0$, together with subspaces $V^v$ of $\Gamma(Z_v,\sL^v)$ for
each $v \in V(\Gamma)$, satisfying the condition that for all 
$w \in V(G(w_0))$, the natural morphism
\begin{equation}\label{eq:gluing-map-1} 
\Gamma(\widetilde{X}_0, \sL_w) 
\to \bigoplus_{v \in V(\Gamma)} \Gamma(Z_v, \sL^{v})/V^v
\end{equation}
has kernel of dimension at least $r+1$, where \eqref{eq:gluing-map-1} is
obtained as the composition
\begin{multline*}
\Gamma(\widetilde{X}_0, \sL_w) \overset{\oplus f_{w,w_v}}{\to} 
\bigoplus_{v \in V(\Gamma)} \Gamma(\widetilde{X}_0, \sL_{w_v})
\\ \to \bigoplus_{v \in V(\Gamma)} \Gamma(Z_v, \sL^{v})
\to \bigoplus_{v \in V(\Gamma)} \Gamma(Z_v, \sL^{v})/V^v.
\end{multline*}
\end{defn}

Clearly, the choices of concentrated multidegrees are necessary to
even define the data underlying a limit linear series. However, we will
show in Proposition \ref{prop:indep-of-w} below that the resulting 
moduli space of limit linear series does not depend on this choice.

\begin{rem}\label{rem:enriched} Even in the compact type case, the
sections $s_v$ of Situation \ref{sit:enriched} are not typically unique,
even up to scaling: indeed, if $v$ disconnects $\Gamma$, then $s_v$ can
be scaled independently on (the subcurves corresponding to) each
resulting connected component. Thus, \textit{a priori} our definition 
of limit linear series depends on extra data even in the compact type 
case. However, the choice of $s_v$ is unique up to scaling on each
component, and because the maps \eqref{eq:gluing-map-1} are obtained
by restricting to individual components, their kernels do not depend on
the choice of $s_v$. Consequently, we see that the notion of limit linear 
series is in fact independent of the choices of the $s_v$. This is 
different from the notion of linked linear series introduced in \cite{os20}
(generalizing \cite{os8}), where even for curves of compact type, the
choice of $s_v$ does have an effect.
\end{rem}

\begin{rem}\label{rem:self-nodes}
We have chosen not to allow self-nodes (i.e., nodes on single irreducible
components) not because they are harder to handle, but because they are 
already better understood, and our techniques don't add anything new for
them. It is not difficult to combine our techniques with those developed 
for self-nodes, but we have chosen to present our definitions and results
without any self-nodes because we would have to systematically treat the
two types of nodes differently. 

If one wants to treat limit linear series
on a reducible curve with some self-nodes, there are several options:
the first, which is simplest to state but probably least effective for
computation is to simply introduce new rational components at each
self-node, thereby removing all self-nodes; another option is to work with
sheaves which are allowed to be torsion-free (but not invertible) at the 
self-nodes. In the latter case, one can study the resulting linear series 
by partially 
normalizing at the self-nodes and studying linear series on the resulting
smooth component(s), imposing a secancy condition at each
pair of points lying above self-nodes at which the sheaf was invertible 
(above nodes at which the sheaf was not invertible, one does not
have a gluing condition, but the degree on the relevant component is
decreased). This approach was developed already by Kleiman \cite{kl3}
nearly 40 years ago.
\end{rem}

\begin{rem}\label{rem:v-reduced} Obviously, concentrated multidegrees are
not unique, so a choice of these is a necessary input to our definition
of limit linear series. Although Proposition \ref{prop:indep-of-w} asserts 
that in fact the resulting limit linear series moduli spaces will not depend
on the choice of the $w_v$, it is still natural to wonder to what extent
one can make canonical choices of the tuples $(w_v)_v$ of concentrated 
multidegrees. The answer likely comes from the theory of $v$-reduced 
divisors, which plays an important role in Brill-Noether theory for graphs. 
However, since our theory goes through fully
as long as the $w_v$ are concentrated, it seems potentially advantageous 
not to place any further restrictions on them. Thus, the question of
canonical choices is
rather orthogonal to the purpose of the present paper, and for the sake
of simplicity we do not pursue it here. 
\end{rem}

\section{Families and moduli schemes}

In this section, we construct a moduli scheme of limit linear series,
show that it is independent of the choice of tuple of concentrated
multidegree, and finally give an alternate description which generalizes 
to the case of smoothing families. The main technical tool is the generalized 
determinantal loci introduced in Appendix B of \cite{os20}, and the
main issue that needs to be addressed for smoothing families is that
the limit linear series are defined in terms of (sections of) line bundles 
$\sL^v$ on individual components of the reducible curve, which no longer 
makes sense in a smoothing family. This difficulty is resolved by working 
instead with the line bundles $\sL_{w_v}$ on the whole curve, with
multidegree concentrated on the relevant component. 

This section is of a foundational nature, and later sections are
largely independent from it, with the exception of Theorem
\ref{thm:smoothing}, our smoothing result. Note, however, that our
specialization result, Corollary \ref{cor:specialize}, is proved in
this section.

First, we set the following notation.

\begin{notn}\label{not:prw}
In the situation of Definition \ref{def:lls},
let $P^r_{w_{\bullet}}(X_0,\bn,(\sO_v)_v)$
be the scheme parametrizing tuples
$(\sL,(V^v)_{v \in V(\Gamma)})$, where $\sL$ is a line bundle
on $\widetilde{X}_0$ of multidegree $w_0$, and each $V^v$ is an
$(r+1)$-dimensional space of global sections of the induced
line bundle $\sL^v$ on $Z_v$. 
\end{notn}

Thus, if we write $d_v=\deg \sL^v$, then 
$P^r_{w_{\bullet}}(X_0,\bn,(\sO_v)_v)$ can naturally be constructed as
a fibered product of $\Pic^{w_0}(\widetilde{X}_0)$ with the spaces 
$G^r_{d_v}(Z_v)$, fibered over the spaces $\Pic^{d_v}(Z_v)$.

We then construct a moduli scheme of limit linear series as follows.

\begin{defn}\label{def:lls-space} In the situation of Definition 
\ref{def:lls}, write $\sM$ for the universal line bundle on
$P^r_{w_{\bullet}}(X_0,\bn,(\sO_v)_v) \times X_0$, and $\sV^v$ for the 
universal subbundles of the induced $p_{1*} \sM^v$. Then let
$G^r_{\bar{w}_0}(X_0,\bn,(\sO_v)_v)$ be the closed
subscheme of $P^r_{w_{\bullet}}(X_0,\bn,(\sO_v)_v)$ defined by the intersection
over $w \in V(G(w_0))$ of the $(r+1)$st vanishing loci of the maps
\begin{equation}\label{eq:lls-map-1}
p_{1*} \sM_w \to \bigoplus_{v \in V(\Gamma)} (p_{1*} \sM^v)/\sV^v.
\end{equation}
\end{defn}

In the above, the $(r+1)$st vanishing locus is a canonical scheme structure 
on the set of points on which the kernel has dimension at least $r+1$,
defined in Appendix B of \cite{os20}. Thus, 
$G^r_{\bar{w}_{0}}(X_0,\bn,(\sO_v)_v)$ 
is a canonical scheme structure on the set of limit linear series described 
in Definition \ref{def:lls}. The notation $\bar{w}_0$ represents the
collection of admissible multidegrees obtained from $w_0$ by twisting 
(that is, $V(G(w_0))$); we use it because we will prove shortly, in 
Proposition \ref{prop:indep-of-w}, that the choice of the $w_v$ does not 
affect the resulting moduli scheme.

The $T$-valued points of 
$P^r_{w_{\bullet}}(X_0,\bn,(\sO_v)_v)$ are tuples $(\sL,(V^v))$, where
$\sL$ is a line bundle on $T \times \widetilde{X}_0$ of multidegree $w_0$, 
and each $V^v$ is a rank-$(r+1)$ subbundle of $p_{1*} \sL^v$ (in the sense
of Definition 4.2 of \cite{os8}). 
Such a tuple is a $T$-valued point of $G^r_{\bar{w}_{0}}(X_0,\bn,(\sO_v)_v)$ 
if for all $w \in V(G(w_0))$, the map
\begin{equation}\label{eq:t-vald-1}
p_{1*} \sL_w \to \bigoplus_{v \in V(\Gamma)} (p_{1*} \sL^v)/V^v
\end{equation}
has $(r+1)$st vanishing locus equal to all of $T$.

Our next task is to show that in fact, for a fixed $w_0$, the spaces of
limit linear series for different choices of the $w_v$ are canonically 
identified with one another.

A preliminary fact is the following.

\begin{prop}\label{prop:concentrated-inject} Let $\sL$ be a line bundle
of multidegree $w \in V(G(w_0))$ on $\widetilde{X}_0$, and suppose that 
$w$ is concentrated at $v$. Then the restriction map
$$H^0(\widetilde{X}_0,\sL) \to H^0(Z_v,\sL|_{Z_v})$$
is injective.
\end{prop}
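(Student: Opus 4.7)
The plan is to reduce to a cohomological vanishing on $Z_v^c$, and then prove that vanishing by peeling off components inductively along the concentration ordering. From the short exact sequence of sheaves on $\widetilde{X}_0$
$$0 \to \sL \otimes \cI_{Z_v} \to \sL \to \sL|_{Z_v} \to 0,$$
where $\cI_{Z_v}$ is the ideal sheaf of $Z_v$, the kernel of the restriction map is identified with $H^0(Z_v^c, \sL|_{Z_v^c}(-D_v))$, where $Z_v^c = \overline{\widetilde{X}_0 \smallsetminus Z_v}$ and $D_v = Z_v \cap Z_v^c$. Thus it suffices to show this $H^0$ vanishes.

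For this I would fix an ordering $v = v_0, v_1, \dots, v_n$ of $V(\Gamma)$ witnessing concentration, and write $w^{(i)}$ for the multidegree obtained from $w$ by the composition of negative twists at $v_0, \dots, v_{i-1}$, so that $w^{(i)}(v_i) < 0$ by hypothesis. The key geometric input, coming from unwinding Notation \ref{not:twist}, is the identity that $w^{(i)}(v_i)$ equals $\deg \sL|_{Z_{v_i}}$ minus the number of nodes of $Z_{v_i}$ lying on components of $\widetilde{X}_0$ already processed earlier in the induction. Once $s$ is known to vanish on the previously-processed components, $s|_{Z_{v_i}}$ thus becomes a section of a negative-degree line bundle on the smooth curve $Z_{v_i}$, hence is zero.

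The main technical obstacle is interleaving the induction on the $v_i$ with the peeling of the chains of exceptional $\PP^1$'s that connect the $Z_{v_j}$'s in $\widetilde{X}_0$, on which $\sL$ has degree $0$ or $1$ and which are not directly controlled by the concentration condition on $V(\Gamma)$. I would handle these as follows: on a degree-$0$ exceptional $\PP^1$ adjacent to an already-vanishing component, the single forced zero forces $s$ to vanish on that $\PP^1$, and this propagates along any degree-$0$ initial segment of a chain emanating from a processed component; the (at most one) degree-$1$ exceptional in each chain is peeled only after the component at the far end of the chain has also been processed, at which point $s$ has two forced zeros on $\sO_{\PP^1}(1)$ and must vanish. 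The bookkeeping needed to verify that this interleaved peeling order is consistent with the degree-count identity in the previous paragraph is essentially the content of Remark \ref{rem:twists}, which translates negative twists on $(\Gamma, \bn)$ into sequences of negative twists on $\widetilde{\Gamma}$.
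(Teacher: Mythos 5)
Your argument is correct and is essentially the paper's own proof: both induct along the concentration ordering, using the fact that once $s$ vanishes on the previously handled components it acquires on $Z_{v_i}$ exactly as many zeros (propagated through the $\mu(e)=0$ exceptional chains) as the degree drop produced by the composed negative twists, so $w^{(i)}(v_i)<0$ forces $s|_{Z_{v_i}}=0$. Your explicit peeling of the degree-$0$ and degree-$1$ exceptional components merely spells out a step the paper leaves implicit, and your bookkeeping for it is consistent with the degree-count identity.
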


\begin{proof} The main point is that for any vertices $v',v''$, and any
section $s \in H^0(\widetilde{X}_0,\sL)$ which vanishes on $Z_{v''}$, then 
the number of zeroes consequently imposed on $Z_{v'}$ is equal to the
change in index $v'$ when we take the negative twist of $w$ at $v''$.
Indeed, if there are $m$ nodes of $X_0$ connecting $Z_{v'}$ to $Z_{v''}$ 
for which $\sL$ is trivial on the associated exceptional chain 
(equivalently, for which $\mu(e)=0$), then $s|_{Z_{v'}}$ must vanish at 
these $m$ nodes, but $m$ is also the amount by which the negative twist of 
$w$ at $v''$ reduces the degree at $v'$. Given this, if $s$ vanishes on
$v$, then we simply traverse $\Gamma$ in the ordering provided by the
definition of concentration, and vanishing on $Z_{v'}$ for all the previous 
vertices $v'$ implies vanishing at the next component as well.
\end{proof}

\begin{cor}\label{cor:concentrated-subbundle} Suppose that $w$ is 
concentrated at $v$, that $w_v$ can be obtained from $w$ by twisting at 
vertices other than $v$, and that we have a $T$-valued tuple
$(\sL,(V^v)_v)$ such that the $(r+1)$st vanishing locus of
\eqref{eq:t-vald-1} is all of $T$. Then the kernel of \eqref{eq:t-vald-1} 
is a subbundle of $p_{1*} \sL_w$
of rank $r+1$, and is equal to the preimage of $V^v$ under the map
$p_{1*} \sL_w \to p_{1*} \sL^v$. Moreover, both statements hold after
arbitrary base change.
\end{cor}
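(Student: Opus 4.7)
The strategy is to factor the map \eqref{eq:t-vald-1} through its $v$-coordinate $p_{1*}\sL_w \to p_{1*}\sL^v$, reduce both assertions to the fiberwise injectivity of this single map, and then use the rank-$(r+1)$ hypothesis on the kernel of \eqref{eq:t-vald-1} to force that kernel to coincide with the preimage of $V^v$.

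The kernel of \eqref{eq:t-vald-1} is automatically contained in the preimage $K$ of $V^v$, since the map $p_{1*}\sL_w \to p_{1*}\sL^v/V^v$ is one of the direct summands into which \eqref{eq:t-vald-1} factors. If I can show (i) that $p_{1*}\sL_w \to p_{1*}\sL^v$ is fiberwise injective, so that $K$ has fiberwise rank at most $r+1$, and (ii) that by the $(r+1)$st vanishing locus hypothesis the kernel of \eqref{eq:t-vald-1} has fiberwise rank at least $r+1$, then both subsheaves coincide with common fiberwise rank $r+1$, so form a subbundle of $p_{1*}\sL_w$ of rank $r+1$.

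The substantive step is (i). On a geometric fiber the map under discussion is the composition $H^0(\widetilde{X}_0,\sL_w) \to H^0(\widetilde{X}_0,\sL_{w_v}) \to H^0(Z_v,\sL^v)$, and this coincides with restriction $H^0(\widetilde{X}_0,\sL_w) \to H^0(Z_v,\sL_w|_{Z_v})$ followed by multiplication by $s|_{Z_v}$, where $s$ is the (product of) twisting sections realizing $\sL_w \to \sL_{w_v}$ along a minimal path in $G(w_0)$. Proposition \ref{prop:concentrated-inject} applied to $w$ gives that the restriction is injective. Because $w_v$ is obtained from $w$ by twisting only at vertices $v' \neq v$, the section $s$ is a tensor product of the various $s_{v'}$ with $v' \neq v$ and sections $s_{v_{e,i}}$ associated to exceptional components of $\widetilde{X}_0$; none of these vanish identically along $Z_v$, so $s|_{Z_v}$ is a nonzero section of a line bundle on the smooth curve $Z_v$, and hence induces an injection on $H^0(Z_v,-)$. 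Composing, we obtain fiberwise injectivity.

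The main obstacle is the family version, where $p_{1*}\sL_w$ need not be locally free and where ``preimage,'' ``kernel,'' and ``rank'' must be interpreted through the generalized determinantal loci of Appendix~B of \cite{os20}. In that framework, once the map $p_{1*}\sL_w \to p_{1*}\sL^v$ has been shown fiberwise injective and $V^v$ is already a subbundle, the preimage $K$ is controlled by an $(r+1)$st vanishing locus construction which is by design stable under arbitrary base change; combining with the hypothesis that the $(r+1)$st vanishing locus of \eqref{eq:t-vald-1} equals all of $T$, one obtains the subbundle conclusion and its persistence under base change essentially formally from the fiberwise statement proved above.
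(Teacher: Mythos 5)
Your proposal is correct and follows essentially the same route as the paper's own proof: factor $p_{1*}\sL_w \to p_{1*}\sL^v$ through restriction to $Z_v$, get fiberwise (hence universal) injectivity from Proposition \ref{prop:concentrated-inject} together with the fact that the twisting sections along a path avoiding $v$ do not vanish identically on $Z_v$, bound the kernel of \eqref{eq:t-vald-1} fiberwise by the preimage of $V^v$, and then let the vanishing-locus machinery of Appendix B of \cite{os20} (the paper cites Proposition B.3.4 and Lemma B.2.3 (iv)) deliver the rank-$(r+1)$ subbundle statement and its stability under arbitrary base change. The only difference is cosmetic: you make explicit the multiplication-by-$s|_{Z_v}$ step that the paper subsumes under ``the hypotheses on $w$,'' and you phrase the fiberwise bound directly rather than as emptiness of the $(r+2)$nd vanishing locus.
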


\begin{proof} First observe that the hypotheses on $w$, together with
Proposition \ref{prop:concentrated-inject}, imply that the map 
$$p_{1*} \sL_w \to p_{1*} \sL_w|_{Z_v} \to p_{1*} \sL^v$$
is injective on points, and hence universally injective.
Now, by hypothesis the $(r+1)$st vanishing locus of 
\eqref{eq:t-vald-1} is all of $T$. On the other hand, at any point,
the kernel is contained in the preimage of (the corresponding fiber of) 
$V^v$, which has dimension $r+1$ by the above injectivity. 
Thus, the $(r+2)$nd vanishing locus of \eqref{eq:t-vald-1} is empty.
The statement of the corollary then follows
from Proposition B.3.4 and Lemma B.2.3 (iv) of \cite{os20}.
\end{proof}

\begin{prop}\label{prop:indep-of-w} In the situation of Definition
\ref{def:lls}, let $(w'_v)_v$ be another choice of admissible 
multidegrees concentrated at the $v \in V(\Gamma)$. Then the schemes
$G^r_{\bar{w}_{0}}(X_0,\bn,(\sO_v)_v)$ obtained from $(w_v)_v$ and
$(w'_v)_v$ are canonically isomorphic.
\end{prop}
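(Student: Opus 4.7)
My plan is to construct canonical, mutually inverse morphisms $\Phi$ and $\Psi$ between the two moduli schemes. On $T$-valued points, given a $(w_v)$-LLS $(\sL, (V^v))$, for each $v$ let $\mathcal{K}^{w'_v} \subseteq p_{1*}\sL_{w'_v}$ denote the kernel of the $(w_v)$-gluing map at multidegree $w'_v$. When $w'_v$ and $w_v$ differ by twists at vertices other than $v$, Corollary \ref{cor:concentrated-subbundle} ensures that $\mathcal{K}^{w'_v}$ is a rank-$(r+1)$ subbundle; its image $V'^v$ under the universally injective restriction $p_{1*}\sL_{w'_v} \to p_{1*}(\sL_{w'_v}|_{Z_v})$ (Proposition \ref{prop:concentrated-inject}) is then a rank-$(r+1)$ subbundle. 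Set $\Phi(\sL,(V^v)) := (\sL,(V'^v))$. Pairs not directly satisfying the Corollary's hypothesis can be handled by chaining through an intermediate choice $(w^*_v)$ concentrated deeply enough at each $v$ that both $w_v$ and $w'_v$ are reachable from $w^*_v$ via twists not involving $v$.

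To verify that $(\sL,(V'^v))$ is a $(w'_v)$-LLS, I need the kernel of the $(w'_v)$-gluing map $\phi'_w$ to have rank $\geq r+1$ for every $w \in V(G(w_0))$, which I establish via the inclusion $\ker\phi_w \subseteq \ker\phi'_w$. For $s \in \ker\phi_w$ and each $v$, showing $f_{w,w'_v}(s) \in \mathcal{K}^{w'_v}$ reduces to checking $f_{w'_v,w_{v''}}(f_{w,w'_v}(s))|_{Z_{v''}} \in V^{v''}$ for all $v''$. The composition corresponds to the concatenated path from $w$ to $w_{v''}$ through $w'_v$, which by Proposition \ref{prop:paths-unique} differs from the minimal path by some non-negative multiple $k \cdot V(\Gamma)$. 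For $k=0$ the composition equals $f_{w,w_{v''}}$ and the containment follows from $s \in \ker \phi_w$; for $k \geq 1$ the associated full-twist section $\prod_{u \in V(\widetilde{\Gamma})} s_u$ is a section of the trivial bundle $\sO_{\widetilde{X}_0}$ that vanishes on every $Z_u$ and is therefore identically zero, so the composition vanishes and the condition holds trivially.

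Next, I define $\Psi$ by the symmetric construction and verify $\Psi \circ \Phi = \id$: this map sends $V^v$ to the image in $p_{1*}(\sL_{w_v}|_{Z_v})$ of $\ker\phi'_{w_v}$. Both $\ker\phi_{w_v}$ and $\ker\phi'_{w_v}$ have rank exactly $r+1$ by Corollary \ref{cor:concentrated-subbundle}, and the previous paragraph's argument applied with $w = w_v$ yields $\ker\phi_{w_v} \subseteq \ker\phi'_{w_v}$, so they coincide; restriction then recovers $V^v$. By symmetry $\Phi \circ \Psi = \id$, and the entire construction is functorial in $T$ because Corollary \ref{cor:concentrated-subbundle} and Proposition \ref{prop:concentrated-inject} hold universally.

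The main obstacle is the kernel containment in the second paragraph; the essential observation is that non-minimal concatenated paths yield zero compositions, because the full-twist section $\prod_{u \in V(\widetilde{\Gamma})} s_u$ lies in $\Gamma(\widetilde{X}_0, \sO_{\widetilde{X}_0})$ but vanishes on each irreducible component of $\widetilde{X}_0$ and is thus the zero constant. Additional bookkeeping is required when the directional hypothesis of Corollary \ref{cor:concentrated-subbundle} fails for the given pair $(w_v), (w'_v)$, where the chaining through a deeper intermediate multidegree becomes necessary.
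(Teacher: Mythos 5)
Your overall mechanism is the same one the paper uses: produce the new subspaces as restrictions of kernels of the old gluing maps in the new concentrated multidegrees, and control compositions of twisting maps via Proposition \ref{prop:paths-unique} together with the observation that a non-minimal composite is killed on the relevant component (your ``full-cycle'' argument for $k\geq 1$ is fine). The genuine gap is in how you dispose of the directional hypothesis of Corollary \ref{cor:concentrated-subbundle}. To make $V'^v$ (and later $\ker\phi'_{w_v}$) a rank-$(r+1)$ subbundle you need $w_v$ to be obtainable from $w'_v$ by twists at vertices other than $v$; when this fails you appeal to an intermediate $w^*_v$, ``concentrated deeply enough,'' from which both $w_v$ and $w'_v$ are reachable by twists avoiding $v$. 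No argument is given that such a $w^*_v$ exists, and the heuristic is backwards. In the problematic case --- when the minimal paths between $w_v$ and $w'_v$ in \emph{both} directions involve a twist at $v$, so that neither of them can serve as the intermediate --- any candidate $w^*_v$ is obtained from $w_v$ by a nonzero sequence of negative twists at vertices other than $v$, and such moves push degree off $v$ and onto the other vertices: already in the two-component case $(d,0)$ is concentrated at $v$ while the result $(d-1,1)$ of one such move is not, and iterating them makes the degrees at the other vertices large and positive. So going ``deeper'' destroys concentration rather than securing it, and the existence of a suitable concentrated $w^*_v$ is a nontrivial combinatorial claim your proof does not establish. The paper avoids this entirely: it reduces to changing one $w_v$ at a time and chooses the auxiliary multidegree on the \emph{other} side, namely a $w''$ obtainable from both $w_v$ and $w'_v$ by twists away from $v$; such a $w''$ exists by Proposition \ref{prop:paths-unique} and is concentrated at $v$ because it differs from $w'_v$ by negative twists at $v$, which only decrease degrees away from $v$. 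You would need either to prove your existence claim or to reorient the chaining as the paper does.

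A second, more technical point: the proposition asserts an isomorphism of schemes cut out by $(r+1)$st vanishing loci, so the pointwise inclusion $\ker\phi_w \subseteq \ker\phi'_w$ does not by itself show that a $T$-valued point of one space is a $T$-valued point of the other; $\ker\phi_w$ need not be a subbundle for general $w$. The paper handles this by either factoring one gluing map through the other (Corollary B.3.5 of \cite{os20}) or by exhibiting a rank-$(r+1)$ subbundle inside the kernel --- the image, under a universally injective twisting map, of the kernel in a concentrated multidegree, which is a subbundle by Corollary \ref{cor:concentrated-subbundle} --- and then invoking Propositions B.3.2 and B.3.4 of \cite{os20}. Your outline could be patched along these lines, but as written it only yields the set-theoretic statement.
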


\begin{proof} It is clearly enough to treat the case that $w'_{v'}=w_{v'}$
for all $v'$ other than some fixed choice of $v$. We first observe that
it follows from Proposition \ref{prop:paths-unique} that
given any $w,w' \in V(G(w_0))$, there is some $w''$ such that the minimal
paths from $w$ to $w''$ and from $w'$ to $w''$ do not require twisting at
$v$. Indeed, if we take a minimal path $P$ from $w$ to $w'$, and let $w''$
be obtained by all twists in $P$ except those at $v$, then the minimal path
from $w''$ to $w'$ involves only twists at $v$, so the minimal path from $w'$
to $w''$ does not require twisting at $v$. Moreover, if $w'$ is concentrated
at $v$, then we see from the construction that $w''$ is also concentrated at
$v$.
Thus, to prove the proposition we may further assume
that $w'_v$ is obtained from $w_v$ by twisting at vertices other than
$v$.
In particular, if, for a given $\sL$ of multidegree $w_0$, we let
$\sL^v$ be as usual, and $\sL'^v$ the corresponding line bundle obtained
from $w'_v$, then the map $\sL^v \to \sL'^v$ is (universally) injective.
It follows that a subbundle $V^v$ of $\sL^v$ induces a subbundle $V'^v$ of
$\sL'^v$,
so we obtain a morphism
$$P^r_{w_{\bullet}}(X_0,\bn,(\sO_v)_v) \to
P^r_{w'_{\bullet}}(X_0,\bn,(\sO_v)_v)$$ 
which we wish to show is an isomorphism on the closed subschemes of 
limit linear series.

First, if $(\sL,(V^v)_v)$ is a $T$-valued point of
$G^r_{\bar{w}_{0}}(X_0,\bn,(\sO_v)_v) \subseteq 
P^r_{w_{\bullet}}(X_0,\bn,(\sO_v)_v)$, and $w \in V(G(w_0))$, we need to
check that the $(r+1)$st vanishing locus of 
\begin{equation}\label{eq:lls-map-1'}
p_{1*} \sL_w \to p_{1*} \sL'^v/V'^v \oplus \bigoplus_{v' \neq v} 
(p_{1*} \sL^{v'})/V^{v'}
\end{equation}
is all of $T$. But by construction, $p_{1*} \sL^v/V^v$ injects into 
$p_{1*} \sL'^v/V'^v$ (universally),
so if there is a minimal path from $w$ to $w'_v$ factoring through
$w_v$, then the kernel of \eqref{eq:lls-map-1'} is identified with that of 
\eqref{eq:t-vald-1}, so the hypothesis that $(\sL,(V^v)_v)$ is in
$G^r_{\bar{w}_{0}}(X_0,\bn,(\sO_v)_v)$ together with Proposition B.3.2 of
\cite{os20} implies that the $(r+1)$st vanishing locus of 
\eqref{eq:lls-map-1'} is all of $T$. Otherwise, we have that the
composition of minimal paths from $w$ to $w_v$ and from $w_v$ to $w'_v$
is not minimal, meaning that it includes a twist at $v$; since the latter
does not have such a twist, we conclude that the minimal path from $w$ to
$w_v$ includes a twist at $v$. In this case, the minimal path from $w_v$
to $w$ does not include a twist at $v$. Let $V_v$ denote the kernel of
\eqref{eq:t-vald-1} in multidegree $w_v$; by Corollary 
\ref{cor:concentrated-subbundle} this is a subbundle which is equal to
the preimage of $V^v$.
Because the minimal path from $w_v$ to $w$ does not include a twist at $v$,
and $w_v$ is concentrated at $v$, we see that the map
$p_{1*} \sL_{w_v} \to p_{1*} \sL_w$ is universally injective, so the
image of $V_v$ is a subbundle of rank $r+1$, which is easily verified to
be in the kernel of \eqref{eq:lls-map-1'}, since $V_v$ is in the kernel of
\eqref{eq:t-vald-1}.
We conclude from Proposition B.3.4 of \cite{os20} that the $(r+1)$st
vanishing locus is all of $T$, as desired.

Now, suppose that $(\sL,(V'^v)_v)$ is a $T$-valued point of
$G^r_{\bar{w}_0}(X_0,\bn,(\sO_v)_v) \subseteq
P^r_{w'_{\bullet}}(X_0,\bn,(\sO_v)_v)$. In order to lift to 
$P^r_{w_{\bullet}}(X_0,\bn,(\sO_v)_v)$, we will set $V^{v'}=V'^{v'}$ for 
all $v' \neq v$. At $v$, we consider \eqref{eq:lls-map-1'} for $w=w_v$,
and apply Corollary \ref{cor:concentrated-subbundle} again to conclude
that the kernel of \eqref{eq:lls-map-1'} is a subbundle of rank $r+1$
which is equal to the preimage of $V'^v$ under the universal injection
$$p_{1*} \sL_{w_v} \hookrightarrow p_{1*} \sL^v \hookrightarrow 
p_{1*} \sL'^v.$$
Put differently, $V'^v$ must be contained in (the image of) 
$p_{1*} \sL_{w_v}$. Then set $V^v$ to be the preimage of $V'^v$ in 
$p_{1*} \sL^v$, or 
equivalently, the image of the kernel of \eqref{eq:lls-map-1'}. This gives
a ($T$-valued) point of $P^r_{w_{\bullet}}(X_0,\bn,(\sO_v)_v)$ mapping to 
$(\sL,(V'^v)_v)$, and it is clear from the above injectivities that such
a point is unique. It thus remains to check that the point we have 
constructed lies in $G^r_{\bar{w}_0}(X_0,\bn,(\sO_v)_v)$.

Given any $w$, we know that the kernel of \eqref{eq:lls-map-1'} has 
$(r+1)$st vanishing locus equal to $T$, and we wish to verify the same
for the kernel of \eqref{eq:t-vald-1}. If there is a minimal path from $w$
to $w'_v$ factoring through $w_v$, then we are in the same situation as
above, and we get the desired statement. On the other hand, if the
minimal path from $w$ to $w_v$ includes a twist at $v$, 
then in \eqref{eq:t-vald-1}
the map to the summand $(p_{1*} \sL^v)/V^v$ is zero, so we conclude that
\eqref{eq:t-vald-1} factors through \eqref{eq:lls-map-1'}, and then by
Corollary B.3.5 of \cite{os20} it follows that the $(r+1)$st vanishing locus 
of \eqref{eq:t-vald-1} is all of $T$. The proposition follows.
\end{proof}

We now describe a second version of the moduli space construction, which 
is less immediately related to our definition of limit linear series, but
which works transparently in families of curves; we will then show in 
Proposition \ref{prop:first-compare} that on the special fiber, the two 
constructions are canonically isomorphic.

\begin{notn}\label{not:prw-2} In the situation of Definition \ref{def:lls}, 
let $\widetilde{P}^r_{w_{\bullet}}(X_0,\bn,(\sO_v)_v)$
be the scheme parametrizing tuples
$(\sL,(V_v)_{v \in V(\Gamma)})$, where $\sL$ is a line bundle
on $\widetilde{X}_0$ of multidegree $w_0$, and each $V_v$ is an
$(r+1)$-dimensional space of global sections of the induced
line bundle $\sL_{w_v}$ on $\widetilde{X}_0$. 
\end{notn}

Denote by
$G^r_{w}(\widetilde{X}_0)$ the moduli scheme of pairs $(\sL,V)$, where $\sL$ 
has multidegree $w$ on $\widetilde{X}_0$,
and $V$ is an $(r+1)$-dimensional 
space of global sections of $\sL$. 
We thus have that $\widetilde{P}^r_{w_{\bullet}}(X_0,\bn,(\sO_v)_v)$ can naturally be 
constructed as the product over $v \in V(\Gamma)$ of the spaces 
$G^r_{w_v}(\widetilde{X}_0)$, fibered over $\Pic^{w_0}(\widetilde{X}_0)$
via twisting by $\sO_{w_v,w_0}$.
In particular, $\widetilde{P}^r_{w_{\bullet}}(X_0,\bn,(\sO_v)_v)$ is
proper over $\Pic^{w_0}(\widetilde{X}_0)$.

\begin{defn}\label{def:lls-space-2} In the situation of Notation
\ref{not:prw-2}, let $\widetilde{\sM}$ be the universal line bundle on
$\widetilde{P}^r_{w_{\bullet}}(X_0,\bn,(\sO_v)_v) \times X_0$, and for each
$w \in V(G(w_0))$, let $\widetilde{\sM}_w$ be induced by twisting as
before. Then for each $v \in V(\Gamma)$, let $\widetilde{\sV}_v$ be the 
universal subbundles of $p_{1*} \widetilde{\sM}_{w_v}$, and let
$\widetilde{G}^r_{\bar{w}_{0}}(X/B,\bn,(\sO_v)_v)$ be the closed
subscheme of $\widetilde{P}^r_{w_{\bullet}}(X/B,\bn,(\sO_v)_v)$ defined by the 
intersection over $w \in V(G(w_0))$ of the $(r+1)$st vanishing loci of the maps
\begin{equation}\label{eq:lls-map-2}
p_{1*} \widetilde{\sM}_w \to 
\bigoplus_{v \in V(\Gamma)} (p_{1*} \widetilde{\sM}_{w_v})/\widetilde{\sV}_v.
\end{equation}
\end{defn}

Thus, a $T$-valued point of 
$\widetilde{P}^r_{w_{\bullet}}(X_0,\bn,(\sO_v)_v)$ is a tuple $(\sL,(V_v))$, 
where $\sL$ is a line bundle on $T \times \widetilde{X}_0$ of multidegree 
$w_0$, and each $V_v$ is a rank-$(r+1)$ subbundle of $p_{1*} \sL_{w_v}$. 
Such a tuple is a $T$-valued point of 
$\widetilde{G}^r_{\bar{w}_{0}}(X_0,\bn,(\sO_v)_v)$ 
if for all $w \in V(G(w_0))$, the map
\begin{equation}\label{eq:t-vald-2}
p_{1*} \sL_w \to \bigoplus_{v \in V(\Gamma)} (p_{1*} \sL_{w_v})/V_v
\end{equation}
has $(r+1)$st vanishing locus equal to all of $T$.

We now check that our two constructions are equivalent. Note that it
follows in particular that
$\widetilde{G}^r_{\bar{w}_{0}}(X_0,\bn,(\sO_v)_v)$ is also independent of
the choice of $(w_v)_v$.

\begin{prop}\label{prop:first-compare} In the situation of Definition
\ref{def:lls}, restriction to the components $Z_v$ induces an isomorphism
$$\widetilde{G}^r_{\bar{w}_{0}}(X_0,\bn,(\sO_v)_v) \risom G^r_{\bar{w}_0}(X_0,\bn,(\sO_v)_v).$$ 
\end{prop}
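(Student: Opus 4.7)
The plan is to exhibit mutually inverse morphisms between $\widetilde{G}^r_{\bar{w}_0}$ and $G^r_{\bar{w}_0}$, leveraging Proposition \ref{prop:concentrated-inject} and Corollary \ref{cor:concentrated-subbundle} to pass back and forth between subbundles of $p_{1*}\sL_{w_v}$ (on $\widetilde{X}_0$) and subbundles of $p_{1*}\sL^v$ (on $Z_v$).

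First, I would construct the restriction morphism $\widetilde{G}^r_{\bar w_0} \to G^r_{\bar w_0}$. Given a $T$-valued point $(\sL,(V_v)_v)$ of $\widetilde{G}^r_{\bar w_0}$, since $w_v$ is concentrated at $v$, Proposition \ref{prop:concentrated-inject} shows (after checking this holds universally, e.g.\ by applying it fiberwise and using flatness) that $p_{1*}\sL_{w_v} \hookrightarrow p_{1*}\sL^v$ is universally injective. Hence the image $V^v$ of $V_v$ under this map is a rank-$(r+1)$ subbundle of $p_{1*}\sL^v$, and the assignment $(\sL,(V_v)_v) \mapsto (\sL,(V^v)_v)$ defines the morphism on the level of points of $P^r_{w_\bullet}$ and $\widetilde{P}^r_{w_\bullet}$.

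Next I would construct the candidate inverse. Given a $T$-valued point $(\sL,(V^v)_v)$ of $G^r_{\bar w_0}$, specialize \eqref{eq:t-vald-1} to $w=w_v$; since $w_v$ is concentrated at $v$ and trivially obtainable from itself without twisting at $v$, Corollary \ref{cor:concentrated-subbundle} applies and tells us that the kernel of \eqref{eq:t-vald-1} is a subbundle of $p_{1*}\sL_{w_v}$ of rank $r+1$, equal to the preimage of $V^v$ under the universal injection $p_{1*}\sL_{w_v} \hookrightarrow p_{1*}\sL^v$. Define $V_v$ to be this kernel. It is then immediate from the two constructions that they are mutually inverse on underlying tuples: $V^v$ is the image of $V_v$ under restriction, and $V_v$ is the preimage of $V^v$.

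The remaining, and least formal, step is verifying scheme-theoretic compatibility — namely, that the two vanishing conditions defining the closed subschemes match up under this bijection. The key observation is that for each $v' \in V(\Gamma)$, the universal injection $p_{1*}\sL_{w_{v'}} \hookrightarrow p_{1*}\sL^{v'}$ identifies $V_{v'}$ with the preimage of $V^{v'}$, so it induces a universal injection
\[
p_{1*}\sL_{w_{v'}}/V_{v'} \hookrightarrow p_{1*}\sL^{v'}/V^{v'}.
\]
Taking the direct sum over $v'$, we conclude that for every $w \in V(G(w_0))$ the kernels of the maps \eqref{eq:t-vald-1} and \eqref{eq:t-vald-2} agree as subsheaves of $p_{1*}\sL_w$. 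By Proposition B.3.2 of \cite{os20} (as invoked already in the proof of Proposition \ref{prop:indep-of-w}), their $(r+1)$st vanishing loci coincide, so the two closed subscheme conditions cut out the same closed subscheme and the natural morphism is an isomorphism. The main obstacle here is being careful that the constructions are genuinely functorial in $T$ and that the subsheaf identifications hold universally, not merely on fibers; this is handled exactly as in the proof of Proposition \ref{prop:indep-of-w}, so I expect no essentially new difficulty.
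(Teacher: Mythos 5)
Your proposal is correct and follows essentially the same route as the paper's proof: the forward map via restriction and Proposition \ref{prop:concentrated-inject}, the inverse by taking the kernel of \eqref{eq:t-vald-1} at $w=w_v$ via Corollary \ref{cor:concentrated-subbundle}, and the scheme-theoretic match by universally identifying the kernels of \eqref{eq:t-vald-1} and \eqref{eq:t-vald-2} through the injections on quotients and applying the vanishing-locus results of Appendix B of \cite{os20}. The only cosmetic differences are which specific Appendix B statements you cite and that you fold the forward containment $\widetilde{G}\to G$ into the kernel identification rather than invoking Corollary B.3.5 separately.
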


\begin{proof} 
We first verify that restriction to the $Z_v$ induces a morphism
\begin{equation}\label{eq:prw}
\widetilde{P}^r_{w_{\bullet}}(X_0,\bn,(\sO_v)_v) 
\to P^r_{w_{\bullet}}(X_0,\bn,(\sO_v)_v),
\end{equation}
which amounts to the assertion that if $V_v$ is a subbundle of
$p_{1*} \sL_{w_v}$ on some scheme $T$ over $\Spec k$, then restricting 
$V_v$ to $Z_v$ induces a subbundle of the same rank of $p_{1*} \sL^v$. By 
Lemma B.2.3 (iii) of \cite{os20}, this follows from injectivity of
restriction on points, 
which is Proposition \ref{prop:concentrated-inject}.
Next, that \eqref{eq:prw} induces a morphism
$$\widetilde{G}^r_{\bar{w}_{0}}(X_0,\bn,(\sO_v)_v) \to 
G^r_{\bar{w}_0}(X_0,\bn,(\sO_v)_v)$$
is immediate from the fact that \eqref{eq:t-vald-1} factors through
\eqref{eq:t-vald-2}, using Corollary B.3.5 of \cite{os20}. 

It thus remains to prove that this morphism is an isomorphism, or 
equivalently that every $T$-valued point of 
$G^r_{\bar{w}_0}(X_0,\bn,(\sO_v)_v)$ lifts to a unique point of
$\widetilde{G}^r_{\bar{w}_{0}}(X_0,\bn,(\sO_v)_v)$. Accordingly, suppose
that $(\sL,(V^v)_{v \in V(\Gamma)})$ is a $T$-valued point of
$G^r_{\bar{w}_0}(X_0,\bn,(\sO_v)_v)$; by the injectivity of the 
maps $p_{1*} \sL_{w_v} \to p_{1*} \sL^v$,
a lift $(\sL,(V_v)_{v \in V(\Gamma)})$ is unique, if it exists.
Next, for any $v \in V(\Gamma)$, if we consider the multidegree $w_v$, 
Corollary \ref{cor:concentrated-subbundle} implies that the kernel
of \eqref{eq:t-vald-1} is a subbundle of $p_{1*} \sL_{w_v}$ of rank 
$r+1$, which is the preimage of $V^v$. We thus set this kernel as our $V_v$.
Thus, it is enough to see that with this
choice of the bundles $V_v$, we have that for every multidegree $w$,
the $(r+1)$st vanishing locus of \eqref{eq:t-vald-2} is all of $T$. 
But by construction, for each $v$ the natural map 
$\left(p_{1*} \sL_{w_v}\right)/V_v \to
\left(p_{1*} \sL^v\right)/V^v$
is injective, even after arbitrary base change,
so it follows that for
any $w$, the kernels of \eqref{eq:t-vald-1} and \eqref{eq:t-vald-2}
are identified, likewise after arbitrary base change. Then the $(r+1)$st
vanishing loci agree by Proposition B.3.4 of \cite{os20}, giving the
desired statement.
\end{proof}

We conclude this section by explaining how the construction of 
Definition \ref{def:lls-space-2} works in families, and applying it to 
prove a specialization statement.

First, the families of curves we will consider are as follows:

\begin{defn}\label{def:family} We say that $\pi:X \to B$ is a
\textbf{smoothing family} if $B=\Spec R$ for $R$ a DVR, and further:
\begin{Ilist}
\itm $\pi$ is flat and proper;
\itm the special fiber $X_0$ of $\pi$ is a (split) nodal curve;
\itm the generic fiber $X_{\eta}$ of $\pi$ is smooth;
\itm $\pi$ admits sections through every component of $X_0$.
\end{Ilist}

If further $X$ is regular, we say that $\pi$ is a \textbf{regular} smoothing
family.
\end{defn}

See Remark \ref{rem:smoothing-fam} below for discussion of our choice
of level of generality. Condition (IV) is always satisfied after etale
base change, and is used to ensure the existence of a Picard scheme with
universal line bundle.

Associated to a smoothing family we still have a
dual graph $\Gamma$: namely, the dual graph of the special fiber $X_0$. We 
then continue to use the notation $Z_v$ to denote the component of $X_0$ 
corresponding to a vertex $v$ of $\Gamma$. In this situation, one may
define an enriched structure as before, with the additional condition
that there should exist sections $s_v$ as in Situation \ref{sit:enriched}.
We then see:

\begin{prop}\label{prop:enriched-detd} If $\pi:X \to B$ is a
regular smoothing family, then an enriched structure is uniquely determined
by setting $\sO_v=\sO_X(Z_v)$, and $s_v$ as in Situation \ref{sit:enriched} 
are then induced by the canonical inclusions $\sO_X \to \sO_X(Z_v)$.
Moreover, this choice induces an enriched structure together with suitable
sections on $X_0$ via restriction.
\end{prop}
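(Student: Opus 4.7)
The plan is to verify that the line bundles $\sO_v := \sO_X(Z_v)$ on $X$, together with the canonical global sections coming from the inclusions $\sO_X \hookrightarrow \sO_X(Z_v)$, satisfy after restriction to $X_0$ the two conditions of Definition \ref{def:enriched} and the requirement in Situation \ref{sit:enriched}. The crucial ingredient is that regularity of $X$ forces each irreducible component $Z_v \subseteq X_0$ to be a prime Weil divisor, hence Cartier, so $\sO_X(Z_v)$ and its canonical section are well-defined; this Cartier property is also what forces the construction to be canonical, giving the claimed uniqueness of the induced enriched structure.

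First I would check condition (II). If $t \in R$ is a uniformizer, then $\pi^* t$ cuts out the special fiber as the principal Cartier divisor $X_0 = \sum_v Z_v$ on $X$, so $\bigotimes_v \sO_X(Z_v) \cong \sO_X(X_0) \cong \sO_X$; restricting to $X_0$ yields condition (II). The very same identification is the key to (I): it gives $\sO_X(Z_v) \cong \sO_X(-Z_v^c)$, and then restricting to $Z_v$ produces $\sO_v|_{Z_v} \cong \sO_{Z_v}(-(Z_v^c \cap Z_v))$, while restricting $\sO_X(Z_v)$ itself to $Z_v^c$ (an embedding whose image meets the support of $Z_v$ in only a finite set of nodes) yields $\sO_v|_{Z_v^c} \cong \sO_{Z_v^c}(Z_v^c \cap Z_v)$.

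For the sections, the canonical inclusion $\sO_X \hookrightarrow \sO_X(Z_v)$ corresponds to the global section of $\sO_X(Z_v)$ whose divisor is $Z_v$. Restricting to $X_0$, the resulting section $s_v$ of $\sO_v$ has zero locus exactly $Z_v$: under the identifications above, its restriction to $Z_v^c$ cuts out precisely the nodes $Z_v^c \cap Z_v$, and its restriction to $Z_v$ itself is identically zero. This gives the sections required by Situation \ref{sit:enriched}.

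The argument presents no serious conceptual obstacle. The main point requiring care is the restriction appearing in condition (I): one cannot compute $\sO_X(Z_v)|_{Z_v}$ by naive pullback of a divisor along an embedding containing the support of that divisor, and must instead route through the linear equivalence $Z_v \sim -Z_v^c$ supplied by the triviality of $\sO_X(X_0)$. Once this trick is applied, all the conditions fall out mechanically.
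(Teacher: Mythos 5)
Your proposal is correct, and it is essentially the argument the paper has in mind: Proposition \ref{prop:enriched-detd} is stated without proof, being treated as the standard consequence of regularity of $X$ (so each component $Z_v$ is a Cartier divisor) together with the triviality of $\sO_X(X_0)=\sO_X(\dv(\pi^*t))$, which is exactly how you obtain condition (II) and the identification $\sO_X(Z_v)\cong\sO_X(-Z_v^c)$ used for condition (I). The only step worth making explicit is that the identifications $\sO_X(Z_{v'})|_{Z_v}\cong\sO_{Z_v}(Z_{v'}\cap Z_v)$ need the intersections at the nodes to have multiplicity one; this follows from regularity of $X$ together with nodality of the reduced special fiber (local form $xy=t$), and it is also what guarantees that the restricted canonical section vanishes precisely on $Z_v$, as required in Situation \ref{sit:enriched}.
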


Now, we introduce the following terminology to take chain structures into
account.

\begin{defn}\label{defn:level-family} Given $(X_0,\bn)$ and a regular 
smoothing family $\widetilde{\pi}:\widetilde{X} \to \widetilde{B}$ with 
$\widetilde{B}$ the spectrum of a DVR, we say that $\widetilde{\pi}$ is 
of \textbf{fiber type} $(X_0,\bn)$ if the special fiber of 
$\widetilde{\pi}$ is isomorphic to (a base extension of) the
curve $\widetilde{X}_0$ obtained from $(X_0,\bn)$. Given also a
smoothing family $\pi:X \to B$, with special fiber $X_0$, we say that
$\widetilde{\pi}$ is an \textbf{extension} of $\pi$ if it is obtained
from $\pi$ via base extension followed by iterated blowups at the nodes
of the special fiber.
\end{defn}

Whenever we say $\pi$ is of fiber type $(X_0,\bn)$, we implicitly
assume that we have fixed an isomorphism between the special fiber of $\pi$ 
and the appropriate base extension of $\widetilde{X}_0$.

(Regular) smoothing families of type $(X_0,\bn)$ arise naturally in two 
different ways: the first is as extensions of a given regular smoothing 
family $\pi$, taken for instance in order to extend the generic point
to a field of definition of a line bundle on the geometric generic fiber,
in which case the line bundle will extend over the extended family.
The second is as regularizations of irregular families, in which case no 
base change is involved. The former will be more immediately important to
us, but our theory is general enough to handle both situations at once.

\begin{sit}\label{sit:grd-family} Suppose $\widetilde{\pi}:\widetilde{X}
\to \widetilde{B}$ is a regular smoothing family of fiber type $(X_0,\bn)$,
and we also fix an admissible multidegree $w_0$ on $\widetilde{X}_0$,
as well as a tuple $(w_v)_{v \in V(\widetilde{\Gamma})}$ of vertices of 
$G(w_0)$, with each $w_v$ concentrated at $v$. Let
$(\sO_v,s_v)_{v \in V(\Gamma)}$ be the enriched structure and associated
sections on 
$\widetilde{X}_0$ given by Proposition \ref{prop:enriched-detd}.
\end{sit}

In Situation \ref{sit:grd-family}, given $w \in V(G(w_0))$, denote by 
$\Pic^{w}(\widetilde{X}/\widetilde{B})$ the moduli schemes of line 
bundles of degree $d$
which have multidegree $w$ on fibers lying over the closed point of
$\widetilde{B}$.
Then denote by
$G^r_{w}(\widetilde{X}/\widetilde{B})$ the moduli scheme of pairs 
$(\sL,V)$, where $\sL$ is in $\Pic^w(\widetilde{X}/\widetilde{B})$, and 
$V$ is an $(r+1)$-dimensional space of global sections of $\sL$. 
The representability of these spaces is standard; one can argue just as in 
the proof of Theorem 5.3 of \cite{os8}, for instance.
The maps \eqref{eq:lls-map-2} generalize to this situation, and
we can then generalize the previous constructions to the case of families.

\begin{notn}\label{notn:grd-fam}
Construct 
$\widetilde{P}^r_{w_{\bullet}}(\widetilde{X}/\widetilde{B},X_0,\bn,(\sO_v)_v)$
as the product over $v \in V(\Gamma)$ of the spaces 
$G^r_{w_v}(\widetilde{X})$, fibered over $\Pic^{w_0}(\widetilde{X})$, 
and let
$\widetilde{G}^r_{\bar{w}_{0}}(\widetilde{X}/\widetilde{B},X_0,\bn,(\sO_v)_v)$ 
be the closed subscheme defined by the intersection of the $(r+1)$st 
vanishing loci of the maps \eqref{eq:lls-map-2}, as $w$ varies over 
$V(G(w_0))$.
\end{notn}

We then have the following basic fact.

\begin{prop}\label{prop:proper-generic} 
The moduli scheme 
$\widetilde{G}^r_{\bar{w}_{0}}(\widetilde{X}/\widetilde{B},X_0,\bn,(\sO_v)_v)$ 
is proper over $\Pic^{w_0}(\widetilde{X}/\widetilde{B})$. Its generic fiber 
is naturally identified with $G^r_d(X_{\eta})$, and its special fiber
with (the appropriate base extension of) 
$\widetilde{G}^r_{\bar{w}_{0}}(X_0,\bn,(\sO_v)_v)$.
\end{prop}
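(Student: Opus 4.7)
The plan is to prove the three assertions (properness, special fiber, generic fiber) separately, relying on the moduli-theoretic descriptions already established.

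For properness, I would first observe that $G^r_{w_v}(\widetilde{X}/\widetilde{B})$ is proper over $\Pic^{w_v}(\widetilde{X}/\widetilde{B})$ since it is cut out inside a relative Grassmannian of $R\pi_*$-style sections (its construction via relative Quot or Grassmann schemes of pushforwards is standard, e.g.\ as in the proof of Theorem 5.3 of \cite{os8}). Since $\widetilde{P}^r_{w_{\bullet}}$ is the fibered product over $\Pic^{w_0}(\widetilde{X}/\widetilde{B})$ of the spaces $G^r_{w_v}(\widetilde{X}/\widetilde{B})$ (using the twisting isomorphisms $\Pic^{w_v} \cong \Pic^{w_0}$), it is proper over $\Pic^{w_0}(\widetilde{X}/\widetilde{B})$. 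Finally, $\widetilde{G}^r_{\bar{w}_0}$ is a closed subscheme of $\widetilde{P}^r_{w_\bullet}$, being the intersection of the $(r+1)$st vanishing loci of the maps \eqref{eq:lls-map-2}, and therefore is also proper over $\Pic^{w_0}(\widetilde{X}/\widetilde{B})$.

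For the identification of the special fiber, I would argue directly at the level of the moduli interpretations. A $T$-valued point of the special fiber of $\widetilde{G}^r_{\bar{w}_0}(\widetilde{X}/\widetilde{B},X_0,\bn,(\sO_v)_v)$ corresponds to a map $T \to \widetilde{B}$ factoring through the closed point, together with a tuple $(\sL, (V_v)_v)$ on $T \times \widetilde{X}_0$ satisfying the $(r+1)$st vanishing conditions \eqref{eq:t-vald-2}. Since the enriched structure and associated sections on $\widetilde{X}_0$ induced by restriction from $\widetilde{X}$ coincide with those fixed for the single-curve construction (Proposition \ref{prop:enriched-detd}), this data is precisely that of a $T$-valued point of $\widetilde{G}^r_{\bar{w}_0}(X_0,\bn,(\sO_v)_v)$. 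The universal property then yields the claimed canonical isomorphism of the special fiber with the base extension.

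For the generic fiber, the key observation is that since each $Z_v$ is a component of the special fiber, the line bundle $\sO_v=\sO_{\widetilde{X}}(Z_v)$ restricts to the trivial bundle on $X_\eta$, and the section $s_v$ restricts to a nowhere-vanishing section. Hence for every edge $\e$ and every pair $w,w'\in V(G(w_0))$, the twisting line bundles $\sO_\e$ and $\sO_{w,w'}$ restrict to the trivial bundle on $X_\eta$, with $s_\e$ restricting to a trivialization, and so the induced maps $f_{w,w'}$ on $H^0(X_\eta,\sL_\eta)$ are isomorphisms. Consequently, on the generic fiber, all $p_{1*}\widetilde{\sM}_w$ and $p_{1*}\widetilde{\sM}_{w_v}$ are canonically identified, the maps \eqref{eq:t-vald-2} collapse to the natural map
\[V \to \bigoplus_{v\in V(\Gamma)} V/V_v\]
(for a single $(r+1)$-dimensional space $V$ of sections and subspaces $V_v\subseteq V$, after the identifications), and the condition that this kernel has dimension $\geq r+1$ forces $V_v=V$ for every $v$. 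Thus the generic fiber parametrizes pairs $(\sL_\eta,V)$ with $V\subseteq H^0(X_\eta,\sL_\eta)$ an $(r+1)$-dimensional subbundle, which is precisely $G^r_d(X_\eta)$.

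The main subtlety I anticipate is bookkeeping in the generic-fiber step: to ensure that the various universal identifications $p_{1*}\widetilde{\sM}_{w_v}|_\eta \cong p_{1*}\widetilde{\sM}_{w_0}|_\eta$ are compatibly given by the $f_{w_0,w_v}$ (using Corollary \ref{cor:maps-defined}), and that the $(r+1)$st vanishing locus construction of Appendix B of \cite{os20} behaves well enough with respect to these identifications that the identification with $G^r_d(X_\eta)$ is scheme-theoretic rather than merely set-theoretic. This should follow from the base-change properties of the vanishing loci (Proposition B.3.4 of \cite{os20}) applied to the restriction from $\widetilde{B}$ to its generic point.
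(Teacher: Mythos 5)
Your proposal is correct and takes essentially the same route as the paper: properness and the special-fiber identification are read off directly from the construction, and the generic fiber is handled by observing that the twisting maps $f_{w,w'}$ are isomorphisms there, so the $(r+1)$st vanishing-locus condition forces all the $V_v$ to coincide (the paper phrases this as each $V_{v'}$ being the image of a fixed $V_v$, invoking Proposition B.3.4 and Lemma B.2.3 (iv) of \cite{os20} to make the identification work for $T$-valued points, exactly the base-change subtlety you flag). One small slip to fix: on the generic fiber the identified space of sections $p_{1*}\sL_w$ is all of $H^0(X_\eta,\sL_\eta)$, which need not be $(r+1)$-dimensional, so the kernel condition forces the $(r+1)$-dimensional subspaces $V_v$ to be equal to one another (their common intersection), not to the ambient space --- which is in fact the conclusion you correctly state in your final sentence.
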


\begin{proof} The first statement is immediate from the construction,
as is the statement on the special fiber. The description of the generic
fiber follows from the observation that the maps $f_{w,w'}$ are
all isomorphisms over the generic fiber; in fact, we claim that if we fix any 
$v$, then an arbitrary choice of $V_v$ uniquely determines $V_{v'}$ as
the image of $V_v$ for all $v' \neq v$. Indeed, using Proposition B.3.4
and Lemma B.2.3 (iv) of \cite{os20}, we see that for a given choice of $V_v$,
if we consider $w=w_v$ we will have the desired condition on the $(r+1)$st
vanishing locus of \eqref{eq:t-vald-2} if and only if $V_v$ maps into each
of the $V_{v'}$,
which is the same as saying that $V_{v'}$ is the image of $V_v$.
On the other hand, if $V_{v'}$ is the image of $V_v$ for all $v'$, we see
that the kernel of \eqref{eq:t-vald-2} for any $w$ is simply the image of 
$V_v$, so we have the desired behavior of the $(r+1)$st vanishing locus. 
\end{proof}

\begin{cor}\label{cor:specialize} Let $\pi:X \to B$ be a smoothing
family, with special fiber $X_0$.
Let $(\sL,V)$ be a $\fg^r_d$ on the geometric generic fiber $X_{\bar{\eta}}$.
Then there exists a chain structure $\bn$ on $X_0$, an extension
$\widetilde{\pi}:\widetilde{X} \to \widetilde{B}$ of $\pi$ having
fiber type $(X_0,\bn)$, and an admissible
multidegree $w_0$ on the resulting $\widetilde{X}_0$ such that
$\sL$ extends to a line bundle of multidegree $w_0$ on $\widetilde{X}$.

For any such $\bn$, $\widetilde{\pi}$, and $w_0$, and any collection of 
$w_v \in V(G(w_0))$ concentrated at each $v \in V(\Gamma)$, we have that 
$(\sL,V)$ extends to a limit linear series on $\widetilde{X}$.
\end{cor}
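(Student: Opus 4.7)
The plan is to handle the two claims in sequence: first construct the desired $\bn$, $\widetilde{\pi}$, and $w_0$ explicitly, and then use the properness from Proposition \ref{prop:proper-generic} to extend the linear series itself.

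For the existence assertion, I would start by replacing $B$ by a suitable finite extension $B'$ over which $(\sL,V)$ is defined; this preserves the smoothing-family property. Next, I would iteratively blow up the singular points of the resulting total space (all of which lie over nodes of $X_0$) until it becomes regular, obtaining a regular smoothing family $\widetilde{\pi}\colon\widetilde{X}\to\widetilde{B}$. This is a standard procedure for resolving $A_n$-singularities on surfaces, which terminates, and whose effect on the special fiber is to insert a chain of rational curves at each node; the number of curves inserted at each node determines the chain structure $\bn$, making $\widetilde{\pi}$ an extension of $\pi$ of fiber type $(X_0,\bn)$. Because $\widetilde{X}$ is regular, hence locally factorial, the line bundle $\sL$ extends from the generic fiber to an invertible sheaf on $\widetilde{X}$ by taking the Zariski closure of its divisor. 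Finally, by successively twisting this extension by line bundles $\sO_{\widetilde{X}}(Z)$ for suitable irreducible components $Z$ of $\widetilde{X}_0$, one arranges its special-fiber multidegree to be admissible: on each exceptional chain, this is a direct combinatorial reduction which redistributes the degree until at most one exceptional component carries nonzero degree, and that degree is $1$.

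For the extension assertion, I would apply Proposition \ref{prop:proper-generic} directly. The multidegree-$w_0$ extension of $\sL$ determines a morphism $\widetilde{B}\to\Pic^{w_0}(\widetilde{X}/\widetilde{B})$; pulling back $\widetilde{G}^r_{\bar{w}_0}(\widetilde{X}/\widetilde{B},X_0,\bn,(\sO_v)_v)$ along this morphism yields a scheme proper over $\widetilde{B}$ whose generic fiber is identified with $G^r_d(X_\eta)$ and whose special fiber is (a base extension of) $\widetilde{G}^r_{\bar{w}_0}(X_0,\bn,(\sO_v)_v)$. The pair $(\sL,V)$ corresponds to a point of the generic fiber, and since $\widetilde{B}$ is the spectrum of a DVR, the valuative criterion of properness produces a section $\widetilde{B}\to\widetilde{G}^r_{\bar{w}_0}$ extending this point. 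Its value at the closed point is by construction a limit linear series on $\widetilde{X}_0$ specializing $(\sL,V)$.

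The main obstacle is the combinatorial adjustment to admissible multidegree, since everything else is either standard resolution theory or a formal consequence of properness and the valuative criterion. Once admissibility is in hand, the specialization of $(\sL,V)$ follows at once from Proposition \ref{prop:proper-generic}.
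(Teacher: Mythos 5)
Your proposal follows essentially the same route as the paper: pass to a finite extension of the base over which $(\sL,V)$ is defined, blow up repeatedly at the nodes to obtain a regular extension of fiber type $(X_0,\bn)$, extend $\sL$ using regularity of $\widetilde{X}$, twist by components of the special fiber to reach an admissible multidegree, and then conclude by properness of $\widetilde{G}^r_{\bar{w}_{0}}(\widetilde{X}/\widetilde{B},X_0,\bn,(\sO_v)_v)$ over the base via Proposition \ref{prop:proper-generic}. The only ingredient you omit is the appeal to Proposition \ref{prop:first-compare}, which identifies the special fiber $\widetilde{G}^r_{\bar{w}_{0}}(X_0,\bn,(\sO_v)_v)$ with the moduli space $G^r_{\bar{w}_0}(X_0,\bn,(\sO_v)_v)$ of limit linear series in the sense of Definition \ref{def:lls}, and is how the paper turns the limit point produced by properness into a limit linear series as stated.
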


\begin{proof} This is mostly standard, but also brief, so we include it
for the convenience of the reader. The last assertion is immediate from
Propositions \ref{prop:proper-generic} and \ref{prop:first-compare}.

For the first assertion, we necessarily have $(\sL,V)$ defined
over some finite extension $\eta'$ of $\eta$; let $B'$ be the 
corresponding integral closure of $B$, localized at a closed point.
If $X'=X \times _B B'$, then if we repeatedly blow up the non-smooth locus 
of $X'$ over $B'$ to obtain a regular total space, we obtain our $\bn$ and 
$\widetilde{X}$.
Since $\sL$ is now defined over the new generic fiber, and $\widetilde{X}$ 
is still regular, we can extend $\sL$ to all of $\widetilde{X}$. It
remains to see that the extension can be chosen to have admissible 
multidegree, but this is easily achieved by twisting first at non-exceptional
components to achieve sufficiently positive degree on each chain of 
exceptional components,
and then twisting at exceptional components first to achieve nonnegativity
on each component, and then admissibility.
\end{proof}

\begin{rem}\label{rem:level-mult} In fact, we see from the proof of
Corollary \ref{cor:specialize} that we have the following refined
statement: let $\bn'$ be the chain structure on $X_0$ obtained by
setting $\bn'(e)$ to be one greater than the number of blowups required
to make make $X$ regular at the point corresponding to $e$. 
Then the $\bn$ produced in the proof is of the form $\bn(e)=m \bn'(e)$
for all $e$, where $m$ is the ramification index of $B'$ over $B$.

Thus, the collection of chain structures we need to consider in order
to extend line bundles on the initial family are not arbitrary, but are
restricted to multiples of the ``base'' chain structure $\bn'$.
\end{rem}

\begin{rem}\label{rem:smoothing-fam} The base $B$ in Definition 
\ref{def:family} may be generalized considerably, but
this makes the conditions more complicated; compare Definitions 2.1.1 and
2.2.2 of \cite{os20}. Moreover, imposing the existence of an enriched
structure will imply that even if $B$ is higher-dimensional, the
geometry of the family all occurs in codimension $1$, so there seems to
be little reason to introduce additional technical complications.
\end{rem}

\section{The two-component case}\label{sec:two-comp}

In order to give the equivalent definition which will ultimately
generalize that of Eisenbud and Harris, the two-component case
is the simplest situation to consider. Conveniently, it is also the
base case of an induction argument for the more general situation, so
we will first develop the key comparison result for curves with two
components. In this case, we simplify our notation as follows.

\begin{sit}\label{sit:2-comp}
Let $X_0$ consist of two smooth curves $Z_1,Z_2$ glued to
one another at nodes $P_1,\dots,P_m$. Fix the orientation on $\Gamma$ with
all edges going from $Z_1$ to $Z_2$. Let $\bn$ be a chain structure,
and for $i=1,\dots,m$, write $n_i:=\bn(P_i)$. For $i=1,\dots,m$, and
$j=1,\dots,n_i-1$, let $E_{i,j}$ denote the $j$th exceptional component
of $\widetilde{X}_0$ lying over $P_i$ on $X_0$.
Fix an admissible multidegree $w_0$ on $(X_0,\bn)$, and
multidegrees $w_1,w_2 \in V(G(w_0))$ concentrated at $Z_1,Z_2$ 
respectively. Write $\mu_i:=\mu_1(P_i)$, where 
$w_1=((w_1)_{\Gamma},\mu_1(\bullet))$. Let $b$ be the number of twists at 
$Z_1$ required to get from $w_1$ to $w_2$. Identify $V(G(w_0))$ with $\ZZ$
by sending $w$ to the number of twists at $Z_1$ required to get from $w_1$ 
to $w$.
\end{sit}

We will assume throughout this section that we are in the above situation.
In this case, $G(w_0)$ is an unbounded chain, with edges going in each 
direction. We have identified $w_1$ with $0$, and $w_2$ with $b$.
Accordingly, for any line bundle $\sL$ of multidegree $w_0$ on
$\widetilde{X}_0$, for $i \in \ZZ$ we will write $\sL_i$ for the line bundle
$\sL_w$, where $w$ is obtained from $w_1$ by twisting $i$ times at $Z_1$.
As we have already done above, when convenient we will write nodes or 
components in place of the corresponding edges or vertices of the dual graph.

We then introduce the following notation as well.

\begin{notn}\label{not:twists-2-comp}
For any line bundle $\sL$ of multidegree $w_0$ on
$\widetilde{X}_0$, 
write $\sL^1:=\sL_0|_{Z_1}$, and $\sL^2:=\sL_b|_{Z_2}$.
\end{notn}

We now define sequences of effective divisors supported on the $P_i$ which
will be used to give multivanishing sequences. 

\begin{defn}\label{def:twists-div-2-comp}
Let $D^1_0,\dots,D^1_{b+1}$ be the sequence of effective divisors on $Z_1$ 
defined by $D^1_0=0$, and for $i\geq 0$,
$$D^1_{i+1}-D^1_{i}=\sum_{j: \mu_j +i \equiv 0 \pmod{n_j}} P_j,$$
and similarly define $D^2_0,\dots,D^2_{b+1}$ on $Z_2$ by $D^2_0=0$, and for
$i \geq 0$,
$$D^2_{i+1}-D^2_{i}=\sum_{j:\mu_j+b-i \equiv 0 \pmod{n_j}} P_j.$$
\end{defn}

The relationship between the twisting divisors and line bundles is given by 
the following basic proposition, whose proof is left to the reader.

\begin{prop}\label{prop:twists-basic} For $i=0,\dots,b+1$, we have
$$\sL_i|_{Z_1}=\sL^1(-D^1_i)\quad \text{ and } 
\sL_{i-1}|_{Z_2}=\sL^2(-D^2_{b+1-i}),$$
where we use equality to denote canonical isomorphism. 

We also have for all $i=0,\dots,b$ that $P_j$ is in the support of 
$D^1_{i+1}-D^1_{i}$
if and only if $\sL_{i}$ has degree $0$ on $E_{j,\ell}$ for all $\ell$,
and $P_j$ is in the support of $D^2_{i+1}-D^2_{i}$ if and only if
$\sL_{b-i}$ has degree $0$ on $E_{j,\ell}$ for all $\ell$. In particular,
if $E_i$ denotes the union over $j$ such that $P_j$ in the support of 
$D^1_{i+1}-D^1_i$ of the chains of exceptional components lying over
the $P_j$,
$\sL_i|_{E_i} \cong \sO_{E_i}$, and we thus get an induced isomorphism
$$\vp_i:\sL^1(-D^1_i)/\sL^1(-D^1_{i+1}) \risom
\sL^2(-D^2_{b-i})/\sL^2(-D^2_{b+1-i})$$
for each $i$.
\end{prop}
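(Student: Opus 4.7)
The plan is to prove each of the three assertions by induction on $i$, systematically unwinding the twisting line bundles $\sO_\e$ of Notation \ref{not:twist} and restricting them to the relevant components. I will carry out the argument for the first equality; the second is entirely symmetric (swap the roles of $Z_1$ and $Z_2$ and run the induction downward from $i=b$).

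For the inductive step from $\sL_i|_{Z_1} = \sL^1(-D^1_i)$ to $\sL_{i+1}|_{Z_1} = \sL^1(-D^1_{i+1})$, I would note that after $i$ twists at $Z_1$ the $\mu$-value at $P_j$ is $(\mu_j + i) \bmod n_j$, so by Notation \ref{not:twist} the twisting line bundle is
\[
\sO_\e = \sO_{Z_1} \otimes \bigotimes_j \bigotimes_{k=1}^{(\mu_j+i) \bmod n_j} \sO_{v_{P_j,k}}.
\]
By Definition \ref{def:enriched}, $\sO_{Z_1}$ restricts on $Z_1$ to $\sO_{Z_1}(-\sum_j P_j)$, while $\sO_{v_{P_j,1}}$ restricts on $Z_1$ to $\sO_{Z_1}(P_j)$ and $\sO_{v_{P_j,k}}$ restricts trivially for $k \geq 2$ (since only the first exceptional component of the chain over $P_j$ meets $Z_1$). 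Summing contributions, only those $j$ with $(\mu_j+i) \bmod n_j = 0$ produce a net $-P_j$, yielding $\sO_\e|_{Z_1} = \sO_{Z_1}(-(D^1_{i+1}-D^1_i))$ by Definition \ref{def:twists-div-2-comp}. Tensoring with the inductive hypothesis closes the induction. The $n_j=1$ case is automatic since then $\mu_j = 0 = (\mu_j+i) \bmod 1$ for all $i$, and the $-P_j$ already comes from $\sO_{Z_1}|_{Z_1}$. The parallel calculation for $Z_2$ restricts $\sO_\e$ to $Z_2$: only the last exceptional component $v_{P_j,n_j-1}$ in each chain meets $Z_2$, and one checks the contribution $+P_j$ appears precisely when $(\mu_j+i) \bmod n_j = n_j-1$, i.e.\ $\mu_j + (i+1) \equiv 0 \pmod{n_j}$, which matches $D^2_{b-i}-D^2_{b-i-1}$.

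The degree statement is then immediate from Remark \ref{rem:twists} together with Definition \ref{def:multideg-lb}: $\sL_i$ has degree zero on every $E_{j,\ell}$ exactly when the $\mu$-component of $w_i$ at $P_j$ vanishes, i.e.\ $(\mu_j+i) \equiv 0 \pmod{n_j}$, which is the condition defining the support of $D^1_{i+1}-D^1_i$. For the final assertion, each connected component of $E_i$ is a chain of $\PP^1$'s, so a line bundle on it is trivial if and only if it has multidegree $0$; hence $\sL_i|_{E_i} \cong \sO_{E_i}$. A trivialization then identifies the fiber of $\sL_i$ at the $Z_1$-endpoint $P_j$ of each such chain with the fiber at the $Z_2$-endpoint, and by the first two parts these two fibers are canonically the stalks of the skyscraper quotients $\sL^1(-D^1_i)/\sL^1(-D^1_{i+1})$ and $\sL^2(-D^2_{b-i})/\sL^2(-D^2_{b+1-i})$ at $P_j$. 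Assembling over all $j$ in the support yields $\vp_i$; canonicity up to scaling of the chain-by-chain trivialization is precisely what the chosen sections $s_v$ pin down.

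I expect no serious obstacle; the proof is essentially bookkeeping. The one point requiring care is the consistent translation among $\mu$-indices, exceptional-component indices $k$, and the divisors $D^1_\bullet$, $D^2_\bullet$, particularly at the degenerate case $n_j=1$ and at the chain endpoints $k=1$ and $k=n_j-1$.
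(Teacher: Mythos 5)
Your proof is correct: the paper leaves this proposition to the reader, and your induction on the number of twists at $Z_1$, restricting the twisting bundles $\sO_{\e}$ of Notation \ref{not:twist} to $Z_1$ and $Z_2$ via Definition \ref{def:enriched} and matching the result against Definition \ref{def:twists-div-2-comp} (with the $n_j=1$ and endpoint cases handled as you indicate), is exactly the intended routine verification, as is your reading of Definition \ref{def:multideg-lb} for the degree statement. One small correction: the canonicity of $\vp_i$ does not come from the sections $s_v$ --- since $\sL_i$ is trivial on each connected chain of $E_i$, its one-dimensional space of sections on that chain identifies the two endpoint fibers canonically (and when $n_j=1$ the two fibers coincide at the node), which is consistent with Remark \ref{rem:enriched}'s point that the constructions relevant to limit linear series are independent of the choice of the $s_v$.
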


We think of the $\vp_i$ as being gluing maps; in the case of trivial
chain structure, the $\vp_i$ are each defined on all nodes at once,
but in general they are only defined on subsets of the nodes, which
depend on $i$. 

\begin{defn} In the situation of Definition \ref{def:multivanishing},
we say that $j$ is \textbf{critical} for $D_{\bullet}$ if 
$D_{j+1} \neq D_j$.
\end{defn}

Our main comparison result in the two-component case is as then follows:

\begin{lem}\label{lem:compare-2-comps} In Situation \ref{sit:2-comp},
fix also an enriched structure on $\widetilde{X}_0$, and sections $s_v$
as in Situation \ref{sit:enriched}.
For a given $(\sL,(V^1,V^2))$, and $i=1,2$,
denote by $a^i_0,\dots,a^i_r$ the 
multivanishing sequence of $V^i$ along the $D^i_{\bullet}$.
Then $(\sL,(V^1,V^2))$ is a limit linear series if and only if
\begin{Ilist}
\itm for $\ell=0,\dots,r$, if $a^1_{\ell}=\deg D^1_j$ with $j$ critical
for $D^1_{\bullet}$, then
\begin{equation}\label{eq:eh-genl-2}
a^2_{r-\ell} \geq \deg D^2_{b-j};
\end{equation}
\itm for $i=1,2$, there exist bases $s^i_0,\dots,s^i_r$ of the $V^i$ such that
$$\ord_{D_{\bullet}}s^i_{\ell} = a^i_{\ell} \quad \text{ for } 
\ell=0,\dots,r,$$
and for all $\ell$ with \eqref{eq:eh-genl-2} an equality, we have
$$\vp_{j}(s^1_{\ell})=s^2_{r-\ell}$$
when we consider $s^1_{\ell} \in V^1(-D^1_{j})$ and 
$s^2_{r-\ell} \in V^2(-D^2_{b-j})$, with $j$ as in (I).
\end{Ilist}
\end{lem}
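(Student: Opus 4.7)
The plan is to reformulate the limit linear series condition as $\dim K_i \geq r+1$ for every $i \in V(G(w_0)) \cong \ZZ$, where $K_i$ denotes the kernel of \eqref{eq:gluing-map-1} at twist level $w_i$ (identifying $w_1$ with $0$ and $w_2$ with $b$), and to show this system is equivalent to (I) and (II). First I would make $K_i$ explicit for the essential range $i \in \{0,\dots,b\}$: Proposition \ref{prop:twists-basic} identifies $\Gamma(\widetilde{X}_0,\sL_i)$ with the subspace of $\Gamma(Z_1,\sL^1(-D^1_i)) \times \Gamma(Z_2,\sL^2(-D^2_{b-i}))$ consisting of pairs with $\vp_i$-compatible leading coefficients at the nodes of $D^1_{i+1}-D^1_i$; injectivity of this restriction follows because on each exceptional chain of admissible multidegree, a section vanishing at both attachment points must be identically zero. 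Hence $K_i$ is the subspace of $V^1(-D^1_i) \times V^2(-D^2_{b-i})$ satisfying this gluing. For $i$ outside $\{0,\dots,b\}$, the multidegree $w_i$ remains concentrated at $Z_1$ (if $i<0$) or $Z_2$ (if $i>b$), and Proposition \ref{prop:concentrated-inject} together with Corollary \ref{cor:concentrated-subbundle} reduce $\dim K_i \geq r+1$ to the corresponding condition at $i=0$ or $i=b$.

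For the $(\Leftarrow)$ direction, given (I) and (II), I exhibit $r+1$ linearly independent elements of $K_i$ for each $i \in \{0,\dots,b\}$ by case analysis on $\ell\in\{0,\dots,r\}$. Letting $j_\ell$ be the critical index with $a^1_\ell = \deg D^1_{j_\ell}$: if $j_\ell > i$, take $(s^1_\ell,0)$, noting $s^1_\ell \in V^1(-D^1_{i+1})$ so gluing is automatic; if $j_\ell < i$, condition (I) gives $a^2_{r-\ell} \geq \deg D^2_{b-j_\ell} \geq \deg D^2_{b-i+1}$ and $(0,s^2_{r-\ell})$ works symmetrically; if $j_\ell = i$, take $(s^1_\ell,s^2_{r-\ell})$ when (I) is equality (gluing supplied by (II)) and $(0,s^2_{r-\ell})$ when (I) is strict (so $a^2_{r-\ell} \geq \deg D^2_{b-i+1}$ and gluing is again automatic). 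Linear independence is immediate by projecting to the $s^1$- or $s^2$-basis on the appropriate factor.

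For the $(\Rightarrow)$ direction, condition (I) follows from the bound $\dim K_i \leq \dim V^1(-D^1_i)+\dim V^2(-D^2_{b-i})$. Writing $j'_\bullet$ for the critical indices of $D^2_\bullet$, these dimensions count $\{\ell: j_\ell \geq i\}$ and $\{\ell: j'_{r-\ell} \geq b-i\}$; since $j_\bullet$ is non-decreasing and $j'_{r-\bullet}$ is non-increasing in $\ell$, the hypothesis that these counts sum to at least $r+1$ for every $i$ forces $j_\ell+j'_{r-\ell}\geq b$ for every $\ell$ (otherwise at $i = j_{\ell_0}+1$ both sets would miss $\ell_0$, producing a deficit). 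For (II), I use the refined kernel formula $\dim K_i = \dim V^1(-D^1_{i+1}) + \dim V^2(-D^2_{b-i+1}) + \dim(W^1_i \cap \vp_i^{-1}(W^2_{b-i}))$, where $W^v_i$ denotes the image of $V^v(-D^v_i)/V^v(-D^v_{i+1})$ in $\sL^v(-D^v_i)/\sL^v(-D^v_{i+1})$; combining with (I) the intersection is large enough to accommodate all equality-case indices at level $i$. This permits, within each block $\{\ell : j_\ell = i\}$, an iterative choice of leading coefficients for $s^1_\ell$ (with equality-case $\ell$ landing in the intersection subspace) completed to a basis of $W^1_i$, lifted to $V^1$ along the flag $V^1(-D^1_\bullet)$, and dually matched by $s^2_{r-\ell}$ using $\vp_i$.

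The main technical obstacle is the basis construction for (II) in the $(\Rightarrow)$ direction: the ordering conditions on $s^1_\ell$ and $s^2_{r-\ell}$ live in the distinct flags $V^1(-D^1_\bullet)$ and $V^2(-D^2_\bullet)$, while the gluing compatibilities under $\vp_i$ must hold simultaneously at every critical $i$. Verifying the dimensional estimate $\dim(W^1_i \cap \vp_i^{-1}(W^2_{b-i})) \geq |E_i|$ (with $E_i$ the equality-case set at level $i$) and then checking that enough lower-order freedom remains in the flag lifts to absorb the cross-level compatibilities simultaneously is the delicate point of the argument.
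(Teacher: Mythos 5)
The backward implication and the general architecture (reduce to $0\leq i\leq b$ via concentration, identify the kernel of \eqref{eq:gluing-map-1} with $\vp_i$-compatible pairs in $V^1(-D^1_i)\oplus V^2(-D^2_{b-i})$, then count dimensions) match the paper, and your explicit exhibition of $r+1$ independent kernel elements in the $(\Leftarrow)$ direction is correct and if anything more direct than the paper's counting. The genuine gap is in your derivation of (I) in the $(\Rightarrow)$ direction: you use only the crude bound $\dim K_i\leq \dim V^1(-D^1_i)+\dim V^2(-D^2_{b-i})$, and this cannot detect (I) at all. First, the combinatorial step fails as stated: failure of (I) at $\ell_0$ gives only $j'_{r-\ell_0}\leq b-j_{\ell_0}-1=b-i$ at your chosen $i=j_{\ell_0}+1$, so $\ell_0$ may still lie in the second set and no deficit appears. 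More fundamentally, no choice of $i$ can rescue the argument: take $r=0$, one node with trivial chain structure, $d_1=d_2=b=m=1$, and $V^1=\langle s^1\rangle$, $V^2=\langle s^2\rangle$ with neither section vanishing at the node. Then $j_0=j'_0=0$, so (I) fails (and indeed the kernel at $i=0$ is zero, since $V^2(-D^2_1)=0$ and $\vp_0(\bar{s}^1)\neq 0$), yet your counts are $1+0\geq 1$ at $i=0$ and $0+1\geq 1$ at $i=1$, so the hypothesis of your counting argument is satisfied. The missing ingredient is the rank $r_i$ of \eqref{eq:gluing-map-2}: since $V^1(-D^1_i)/V^1(-D^1_{i+1})$ injects into the skyscraper $\sL^1(-D^1_i)/\sL^1(-D^1_{i+1})$ and $\vp_i$ is an isomorphism, $r_i$ is at least the block size $\#\{\ell:a^1_\ell=\deg D^1_i\}$, and it is the refined inequality \eqref{eq:kermap-ineq}, $\ell_1+\ell_3+r_i\leq r+1$, that forces (I) -- this is exactly how the paper argues. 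Note that your own refined kernel formula $\dim K_i=\dim V^1(-D^1_{i+1})+\dim V^2(-D^2_{b-i+1})+\dim\bigl(W^1_i\cap\vp_i^{-1}(W^2_{b-i})\bigr)$ already carries this information, so the repair is to use it for (I) as well, not only for (II).

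On (II) in the $(\Rightarrow)$ direction, you correctly reduce to the estimate $\dim\bigl(W^1_i\cap\vp_i^{-1}(W^2_{b-i})\bigr)\geq \#E_i$, but you leave the basis construction open, flagging the "simultaneous cross-level compatibilities" as the delicate point. In fact there is no cross-level interaction: the indices $\ell$ with $j_\ell=i$ for distinct critical $i$ are disjoint, the gluing at level $i$ couples only the level-$i$ block of $V^1$ with the level-$(b-i)$ block of $V^2$, and lifts from $W^1_i$ (resp.\ $W^2_{b-i}$) to $V^1(-D^1_i)$ (resp.\ $V^2(-D^2_{b-i})$) along the flags are unconstrained, so the construction is blockwise independent; this is precisely why the paper can summarize (II) as the overlap-dimension condition \eqref{eq:gluing} at each critical $i$. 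So that worry dissolves, but as written your proposal both asserts (I) by an argument that is false and leaves the (II) construction unfinished, so the $(\Rightarrow)$ direction is not established.
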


\begin{rem}\label{rem:eh-sym} Although condition (I) appears asymmetric, 
in fact this is not the case; indeed, Proposition \ref{prop:twists-basic}
says that the construction of the 
$D^i_{\bullet}$ implies that $j$ is critical for $D^1_{\bullet}$ if and 
only if $b-j$ is critical for $D^2_{\bullet}$, so (I) is equivalent to 
requiring that if $a^2_{r-\ell}=\deg D^2_{b-j}$ with $b-j$ critical for
$D^2_{\bullet}$, then $a^1_{\ell} \geq \deg D^1_j$.
\end{rem}

As an intermediate step, it is convenient to consider a bounded version
of $G(w_0)$ as follows.

\begin{notn}\label{not:barG-2-comp} Let $\bar{G}(w_0)$ denote the directed
subgraph of $G(w_0)$ consisting of all vertices between $w_1$ and $w_2$ 
(inclusive), and with all edges of $G(w_0)$ connecting vertices in 
$V(\bar{G}(w_0))$.
\end{notn}

It turns out that in the definition of limit linear series, considering
multidegrees in $\bar{G}(w_0)$ suffices.

\begin{prop}\label{prop:compare-restrict} In the situation of
Lemma \ref{lem:compare-2-comps}, $(\sL,(V^1,V^2))$ is a limit linear
series if and only if \eqref{eq:gluing-map-1} has kernel of dimension
at least $r+1$ for all $w \in V(\bar{G}(w_0))$.
\end{prop}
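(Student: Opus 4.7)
The forward direction is immediate since $V(\bar{G}(w_0))\subseteq V(G(w_0))$, so I focus on the converse. Assume the kernel of \eqref{eq:gluing-map-1} has dimension at least $r+1$ for every $w\in V(\bar{G}(w_0))$; the plan is to extend this to all $w\in V(G(w_0))$. By the symmetry swapping the roles of $(Z_1,w_1)$ and $(Z_2,w_2)$, it suffices to treat the case $w>w_2$.

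First I would observe that since $w>w_2>w_1$, the minimal path from $w$ to $w_1$ in $G(w_0)$ passes through $w_2$, so by Corollary \ref{cor:maps-defined} we get $f_{w,w_1}=f_{w_2,w_1}\circ f_{w,w_2}$. Consequently the gluing map at $w$ factors as
\[
\Gamma(\widetilde{X}_0,\sL_w)\xrightarrow{f_{w,w_2}}\Gamma(\widetilde{X}_0,\sL_{w_2})\longrightarrow\bigoplus_{v}\Gamma(Z_v,\sL^v)/V^v,
\]
with the second map being the gluing map at $w_2$. Next I would show this composition is the zero map. The minimal path from $w$ to $w_2$ uses only twists at $Z_2$, and the section $s_\e$ associated with each such twist contains $s_{Z_2}$ as a tensor factor. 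Since $s_{Z_2}|_{Z_2}=0$, the full section defining $f_{w,w_2}$ restricts to $0$ on $Z_2$, so $f_{w,w_2}(\sigma)|_{Z_2}=0$ for every $\sigma$. Hence $\operatorname{im}(f_{w,w_2})$ is contained in the preimage of $V^2$ under $\Gamma(\widetilde{X}_0,\sL_{w_2})\hookrightarrow\Gamma(Z_2,\sL^2)$, which by Corollary \ref{cor:concentrated-subbundle} is precisely the kernel of the gluing map at $w_2$.

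Finally I would show $\dim\Gamma(\widetilde{X}_0,\sL_w)\geq r+1$, since the gluing map at $w$ being zero makes the whole space its kernel. For this I would consider $f_{w_2,w}:\Gamma(\widetilde{X}_0,\sL_{w_2})\to\Gamma(\widetilde{X}_0,\sL_w)$, which is multiplication by a section $s'$ built from twists at $Z_1$. Each tensor factor of $s'$ is either $s_{Z_1}$ or some $s_{v_{e,i}}$ for an exceptional component, and none vanishes identically on $Z_2$ (their vanishing loci meet $Z_2$ in at most finitely many points). Hence $s'|_{Z_2}$ is a nonzero section of a line bundle on the irreducible smooth curve $Z_2$, and multiplication by it is injective on $\Gamma(Z_2,\cdot)$. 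Combined with Proposition \ref{prop:concentrated-inject} applied at $w_2$, this yields injectivity of $f_{w_2,w}$, giving $\dim\Gamma(\widetilde{X}_0,\sL_w)\geq\dim\Gamma(\widetilde{X}_0,\sL_{w_2})\geq r+1$.

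The main obstacle is the careful tracking of these tensor products of sections in the presence of a nontrivial chain structure; the clean duality that twists at $Z_2$ always contribute an $s_{Z_2}$ factor while twists at $Z_1$ never do is what drives both halves of the argument.
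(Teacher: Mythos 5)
Your argument is correct. It shares the paper's overall skeleton -- reduce by symmetry to multidegrees outside $\bar{G}(w_0)$ on one side, and exploit the twisting map between $w$ and the nearest concentrated multidegree together with Proposition \ref{prop:concentrated-inject} -- but the endgame is organized differently. The paper (treating the symmetric case $w<w_1$) simply transports the kernel: it observes that the image of $f_{w_1,w}$ automatically lies in the kernel of \eqref{eq:gluing-map-1} at $w$ (composing with the gluing map introduces twisting sections that vanish identically on the relevant components, so no hypothesis on $V^1,V^2$ is needed for this containment), and that $f_{w_1,w}$ is injective by the same ``inclusion on $Z_1$ plus Proposition \ref{prop:concentrated-inject}'' mechanism you use; hence the kernel at $w$ contains an isomorphic copy of the $(\geq r+1)$-dimensional kernel at $w_1$. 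You instead prove the stronger intermediate fact that for $w$ beyond $w_2$ the entire map \eqref{eq:gluing-map-1} is zero, by factoring it through the gluing map at $w_2$ and invoking Corollary \ref{cor:concentrated-subbundle} (which does use the dimension hypothesis at $w_2$) to identify that kernel with the preimage of $V^2$; you then need the separate step bounding $\dim\Gamma(\widetilde{X}_0,\sL_w)\geq r+1$ via injectivity of $f_{w_2,w}$. Both accountings are valid: yours isolates the clean statement that the gluing condition is vacuous in out-of-range multidegrees, while the paper's is slightly leaner, since its ``image lies in the kernel'' step is hypothesis-free and no appeal to Corollary \ref{cor:concentrated-subbundle} is required.
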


\begin{proof} Since $V(\bar{G}(w_0)) \subseteq V(G(w_0))$, one direction is 
trivial. Conversely, suppose that \eqref{eq:gluing-map-1} has kernel of 
dimension at least $r+1$ for all $w \in V(\bar{G}(w_0))$, and let 
$w' \in V(G(w_0))$ be arbitrary; we need to show that 
\eqref{eq:gluing-map-1} also has kernel of
dimension at least $r+1$ in multidegree $w'$. Considering $w'=i$ for some
$i \in \ZZ$, there are three cases to consider:
either $0 \leq i \leq b$, or $i<0$, or $i>b$.
The first case is the same as having $w' \in V(\bar{G}(w_0))$, so there is
nothing to show. The other two cases being
symmetric, we only treat the case that $i<0$. In this case, we claim
that the kernel $W$ of \eqref{eq:gluing-map-1} in multidegree $w_1$
injects into the kernel of \eqref{eq:gluing-map-1} in multidegree $w'$
under $f_{w_1,w'}$. Indeed, it is clear that the entire image of 
$f_{w_1,w'}$ is contained in the kernel of \eqref{eq:gluing-map-1},
so it suffices to see that $f_{w_1,w'}$ is injective on $W$.
But $f_{w_1,w'}$ is induced by a map which is an inclusion on
$Z_1$, so the desired injectivity is an immediate consequence of 
Proposition \ref{prop:concentrated-inject}.
\end{proof}

Next, in $\bar{G}(w_0)$, we can reinterpret the kernel of 
\eqref{eq:gluing-map-1} as follows.

\begin{prop}\label{prop:maps-ker-compare} In the situation of
Lemma \ref{lem:compare-2-comps}, for $i=0,\dots,b$, consider the map
\begin{equation}\label{eq:gluing-map-2}
V^1(-D^1_i)\oplus V^2(-D^2_{b-i}) \to 
\sL^2(-D^2_{b-i})/\sL^2(-D^2_{b-i+1})
\end{equation}
induced by taking quotients, and applying $-\vp_i$ on the first factor.
Then our morphisms $H^0(\widetilde{X}_0,\sL_i) \to H^0(Z_j,\sL^j)$ 
for $j=1,2$ induce an isomorphism between the kernel of 
\eqref{eq:gluing-map-1} and the kernel of \eqref{eq:gluing-map-2}.
\end{prop}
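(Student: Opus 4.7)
The plan is to reduce the statement to a restriction-and-gluing analysis across the exceptional chains of $\widetilde{X}_0$. I would carry it out in two main steps.

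First, I would verify that for $s \in H^0(\widetilde{X}_0, \sL_i)$, the composition of $f_{i, 0}$ followed by restriction to $Z_1$ is identified, via the canonical isomorphism $\sL_i|_{Z_1} = \sL^1(-D^1_i)$ from Proposition \ref{prop:twists-basic}, with $s|_{Z_1}$ regarded as a section of $\sL^1$ through the tautological inclusion $\sL^1(-D^1_i) \hookrightarrow \sL^1$; and symmetrically $f_{i,b}$ followed by restriction to $Z_2$ is $s|_{Z_2}$ under $\sL^2(-D^2_{b-i}) \hookrightarrow \sL^2$. The key point is that each edge of a minimal path from $i$ to $0$ in $G(w_0)$ is a twist at $v_2$, so restriction to $Z_1$ of the resulting map $\sL_w \to \sL_{w'}$ is, up to scalar, the canonical inclusion $\sL^1(-D^1_k) \hookrightarrow \sL^1(-D^1_{k-1})$, and scalars do not affect the $V^i$-conditions. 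It follows that the kernel of \eqref{eq:gluing-map-1} is identified with the space of $s$ satisfying $s|_{Z_1} \in V^1(-D^1_i)$ and $s|_{Z_2} \in V^2(-D^2_{b-i})$.

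Second, I would show that the restriction map $s \mapsto (s|_{Z_1}, s|_{Z_2})$ identifies $H^0(\widetilde{X}_0, \sL_i)$ with the subspace of $H^0(Z_1, \sL^1(-D^1_i)) \oplus H^0(Z_2, \sL^2(-D^2_{b-i}))$ cut out by the gluing condition at each node $P_j$. A global section of $\sL_i$ is determined by compatible restrictions to every component, so the question reduces to: given boundary values at the two attachment points of the chain at $P_j$, when does there exist a unique section on the chain extending them? In the non-critical case $\mu_j + i \not\equiv 0 \pmod{n_j}$, Proposition \ref{prop:twists-basic} implies that $\sL_i$ restricts to the chain as a degree-$1$ line bundle concentrated on a single component, which after blowing down the degree-$0$ components is $\sO_{\PP^1}(1)$; its $2$-dimensional space of sections maps isomorphically by evaluation at the two endpoints, so any boundary pair extends uniquely. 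In the critical case, the chain carries the trivial line bundle, sections are constants, and extension exists precisely when the two endpoint values agree under the canonical identification $\vp_i$, in which case it is again unique. Summed over all $j$, the matching condition at critical nodes is exactly the vanishing of the map to $\sL^2(-D^2_{b-i})/\sL^2(-D^2_{b-i+1})$ defining \eqref{eq:gluing-map-2}.

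Combining the two steps yields the asserted bijection of kernels. The main technical challenge is the chain-extension analysis in the non-critical case, where one must verify that the $n_j - 2$ internal compatibility conditions on the chain, together with the two endpoint matching conditions, determine a unique section on the chain for any prescribed boundary data. This reduces, after the blowdown, to the standard fact that a section of $\sO_{\PP^1}(1)$ is determined by its values at two distinct points.
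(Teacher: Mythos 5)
Your proposal is correct and follows essentially the same route as the paper's proof: first identifying the kernel of \eqref{eq:gluing-map-1} with pairs of restrictions lying in $V^1(-D^1_i)\oplus V^2(-D^2_{b-i})$ via the identifications of Proposition \ref{prop:twists-basic}, and then analyzing existence and uniqueness of extensions chain by chain, with the trivial chains giving exactly the $\vp_i$-matching condition and the chains carrying degree $1$ on a single component admitting a unique extension for arbitrary endpoint values. Your extra bookkeeping in the first step (minimal paths consisting of twists at $v_2$, scalar ambiguity in the $s_v$) just makes explicit what the paper summarizes as ``by construction.''
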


\begin{proof} The image of
$H^0(\widetilde{X}_0,\sL_i)$ in $H^0(Z_1,\sL^1)$ (respectively, 
$H^0(Z_2,\sL^2)$) is contained in
$H^0(Z_1,\sL^1(-D^1_i))$ (respectively, $H^0(Z_2,\sL^2(-D^2_{b-i}))$)
by construction, so a section of $H^0(X_0,\sL_i)$ which lies in the
kernel of \eqref{eq:gluing-map-1} necessarily restricts to
$V^1(-D^1_i)$ on $Z_1$ and $V^2(-D^2_{b-i})$ on $Z_2$. That it in fact
yields an element in the kernel of \eqref{eq:gluing-map-2} is essentially
the definition of $\vp_i$. To see that the constructed map is bijective,
the main point is that given a pair
$(s_1,s_2) \in H^0(Z_1,\sL_i|_{Z_1}) \oplus H^0(Z_2,\sL_i|_{Z_2})$, 
an extension of $(s_1,s_2)$ to a global section 
$s \in H^0(\widetilde{X}_0,\sL_i)$ is
unique if it exists, and it exists if and only if $\vp_i(s_1)=s_2$,
using the identifications of Proposition \ref{prop:twists-basic}.
Indeed, the assertion is clear on the union of exceptional chains $E_i$ from 
the construction of $\vp_i$,
so it is enough to check that there is always a unique extension over the
exceptional chains not contained in the $E_i$. But if $E$ is such
a chain, then $\sL_i|_E$ has degree $1$ on exactly one irreducible 
component, and degree $0$ on the others, and it follows that $\sL_i|_E$ 
has a unique global section with arbitrary prescribed values at either
end of $E$, giving the desired assertion. The desired bijectivity follows.
\end{proof}

We can now finish our examination of the two-component case.

\begin{proof}[Proof of Lemma \ref{lem:compare-2-comps}]
First, by Propositions \ref{prop:compare-restrict} and 
\ref{prop:maps-ker-compare}, we have reduced to showing that
$(\sL,(V^1,V^2))$ satisfies (I) and (II) if and only if
\eqref{eq:gluing-map-2} has kernel of dimension at least $r+1$ for
$i=0,\dots,b$.

Now, observe that
$$\dim V^1(-D^1_i)=\#\{\ell:a^1_{\ell} \geq \deg D^1_i\}, \text{ and }
\dim V^2(-D^2_{b-i})=\#\{\ell:a^2_{\ell} \geq \deg D^2_{b-i}\}.$$
For each $i$, let $r_i$ be the rank of 
\eqref{eq:gluing-map-2}; note that $r_i=0$ unless $i$ is critical for
$D^1_{\bullet}$. Choose 
$\ell_1$ and $\ell_3$ minimal with 
$a^1_{\ell_1} \geq \deg D^1_i$ and $a^2_{\ell_3} \geq \deg D^2_{b-i}$,
and $\ell_2$ and $\ell_4$ maximal with with $a^1_{\ell_2} \leq \deg D^1_i$ 
and $a^2_{\ell_4} \leq \deg D^2_{b-i}$. Here, if $a^1_{\ell}<\deg D^1_i$ for
all $\ell$, set $\ell_1=r+1$, if $a^1_{\ell} > \deg D^1_i$ for all $\ell$,
set $\ell_2=-1$, and similarly for $\ell_3$ and $\ell_4$.
Then the kernel of \eqref{eq:gluing-map-2} has dimension equal to
$$\dim V^1(-D^1_{i})+\dim V^2(-D^2_{b-i})-r_i
=r+1-\ell_1+r+1-\ell_3-r_i,$$
so for the kernel of \eqref{eq:gluing-map-2} to have dimension at least $r+1$
is equivalent to
\begin{equation}\label{eq:kermap-ineq} 
\ell_1+\ell_3+r_i \leq r+1.
\end{equation}
In addition, we see that condition (II) of the lemma 
is equivalent to having that, for each critical $i$ for 
$D^1_{\bullet}$, the images of $V^1(-D^1_i)$ and $V^2(-D^2_{b-i})$ under 
\eqref{eq:gluing-map-2} overlap with dimension at least equal to 
\begin{equation}\label{eq:gluing} 
\#\{\ell:\ell_1\leq \ell \leq \ell_2, \text{ and } 
\ell_3 \leq r-\ell \leq \ell_4\}.
\end{equation}

Now, if we assume condition (I), we claim that for all $i$, we have
$\ell_1+\ell_3 \leq r+1$, and that if $i$ is critical for $D^1_{\bullet}$, 
we also have 
$\ell_3 \leq r-\ell_2$ and $\ell_1 \leq r -\ell_4$. For the first claim,
note that by definition $a_{\ell_1-1}^1 < \deg D^1_i$; if we let $i'$
be critical for $D^1_{\bullet}$ with $a_{\ell_1-1}^1 = \deg D^1_{i'}$, 
then (I)
implies that $a_{r+1-\ell_1}^2 \geq \deg D^2_{b-i'} \geq \deg D^2_{b-i}$,
so $\ell_3 \leq r+1-\ell_1$, giving the first claimed inequality. Next,
if $i$ is critical for $D^1_{\bullet}$, then there are two cases to 
consider: if $\deg D^1_i$
does not occur in $a^1$, we will have $\ell_2=\ell_1-1$, and in this
case the inequality $\ell_3 \leq r-\ell_2$ is the same as 
$\ell_3 \leq r+1-\ell_1$, which we have just proved. On the other hand, if
$\deg D^1_i$ does occur in $a^1$, then (I) gives 
$a^2_{r-\ell_2} \geq D^2_{b-i}$, which means $\ell_3 \leq r-\ell_2$,
as desired. The proof of the last claimed inequality $\ell_1 \leq r-\ell_4$ 
is similar, taking into account Remark \ref{rem:eh-sym}.

Still assuming (I), we next claim that \eqref{eq:gluing-map-2} having
kernel of dimension at least $r+1$ for $i=0,\dots,b$ is equivalent to 
condition (II).  If $i$ is not critical for $D^1_{\bullet}$,
then $r_i=0$, so we see from \eqref{eq:kermap-ineq} that the desired
kernel size follows
from the inequality $\ell_1+\ell_3 \leq r+1$, which we have already proved. 
If $i$ is critical for $D^1_{\bullet}$, using 
$\ell_3 \leq r-\ell_2$ and $\ell_1 \leq r -\ell_4$.
the inequalities in \eqref{eq:gluing} 
simplify to $r-\ell_4 \leq \ell \leq \ell_2$. Thus, the existence of the 
desired basis is equivalent to requiring that the images of
$V^1(-D^1_i)$ and $V^2(-D^2_{b-i})$ under \eqref{eq:gluing-map-2} overlap with 
dimension at least equal to $\ell_2+1-(r-\ell_4)$. On the other hand, the
dimension of this overlap is equal to the sum of the dimensions of the
images of $V^1(-D^1_i)$ and $V^2(-D^2_{b-i})$, minus $r_i$, which is to
say, $\ell_2+1-\ell_1+\ell_4+1-\ell_3-r_i$, so we conclude that (again,
assuming (I)), condition (II)
is equivalent to the inequality
$$\ell_2+1-\ell_1+\ell_4+1-\ell_3-r_i \geq \ell_2+1-(r-\ell_4),$$
which is the same as \eqref{eq:kermap-ineq}. This proves the claim,
and we conclude that (I) and (II) together imply that $(\sL,(V^1,V^2))$
is a linear linear series, and moreover, that to see the converse, it is 
enough to prove that \eqref{eq:kermap-ineq} implies condition (I).

Thus, assume \eqref{eq:kermap-ineq}. Given $\ell \in \{0,\dots,r\}$, let 
$i$ be critical for $D^1_{\bullet}$ with $\deg D^1_i=a^1_{\ell}$,
and choose $\ell_1,\ell_2,\ell_3,\ell_4$ as above.
Observe that $r_i \geq \#\{\ell':a^1_{\ell'}=i\} = \ell_2+1-\ell_1$
so that (I) implies that 
$r+1-\ell_1-\ell_3 \geq \ell_2+1-\ell_1$. It thus follows that
$r \geq \ell_2+\ell_3 \geq \ell+\ell_3$, so $r-\ell \geq \ell_3$.
Thus, we find
$$a^2_{r-\ell} \geq a^2_{\ell_3} \geq \deg D^2_{b-i},$$
giving (I), and completing the proof of the lemma.
\end{proof}

\section{The pseudocompact-type case}\label{sec:pseudocompact-type}

We conclude by generalizing the results of the previous section to
arbitrary curves of pseudocompact type, thereby providing a 
simultaneous generalization of the two-component case and the
compact-type case. As before, we start with combinatorial preliminaries.

\begin{notn}\label{not:multitree} If $\Gamma$ is a graph, let 
$\bar{\Gamma}$ be the graph obtained from $\Gamma$ by collapsing all
multiple edges to single edges, while leaving the vertex set unchanged.
We say $\Gamma$ is a \textbf{multitree} if $\bar{\Gamma}$ is a tree.
\end{notn}

Just as before we defined twists motivated by twisting at a component,
in the multitree case we define twists motivated by twisting on one side
or the other of the node(s) at which two components meet.

\begin{defn}\label{def:twist-node} If $\Gamma$ is a multitree,
and $(e,v)$ a pair of an edge $e$ and an adjacent 
vertex $v$ of $\bar{\Gamma}$, 
given an admissible multidegree $w$, we define the \textbf{twist} of
$w$ at $(e,v)$ to be obtained from $w$ as follows:
for each $\tilde{e}$ of $\Gamma$ over $e$, increase $\mu(\tilde{e})$ by 
$\sigma(\tilde{e},v)$. Now, decrease $w_{\Gamma}(v)$ by the number of 
$\tilde{e}$ for which $\mu(\tilde{e})$ had been equal to $0$, and for each 
$\tilde{e}$, if the new $\mu(\tilde{e})$ is zero, increase 
$w_{\Gamma}(v')$ by $1$, where $v'$ is the other vertex adjacent to $v$.
\end{defn}

Notice that if $v'$ is the other vertex adjacent to an edge $e$, then twisting 
at $(e,v')$ is inverse to twisting at $(e,v)$. In addition, we observe that 
the twist of $w$ at $(e,v)$ may be obtained as a sequence of
twists of $w$ at vertices $v'$, where $v'$ varies over the set of vertices
in the same connected component as $v$ in $\bar{\Gamma}\smallsetminus \{e\}$.
Conversely, twisting of $w$ at any $v$ can also be obtained as a composition
of twists at $(e,v)$, where $e$ varies over edges adjacent to $v$.

Throughout this section, all twists will be with respect to pairs $(e,v)$,
rather than vertices.

\begin{warn} Even though on a combinatorial level, twisting $w$ by $v$
can be obtained by a sequence of twists at different $(e,v)$, the same
does not hold on the level of the maps between the associated line bundles. 
\end{warn}

\begin{sit}\label{sit:concen-restrict} Suppose we are given a
multitree $\Gamma$, and an admissible multidegree $w_0$, and let 
$(w_v)_{v \in V(\Gamma)}$ be a
collection of elements of $V(G(w_0))$ such that:
\begin{Ilist}
\itm each $w_v$ is concentrated at $v$;
\itm for each $v,v' \in V(\bar{\Gamma})$ connected by an edge $e$, the
multidegree $w_{v'}$ is obtained from $w_v$ by twisting $b_{v,v'}$ times at 
$(e,v)$, for some $b_{v,v'}\in \ZZ_{\geq 0}$.
\end{Ilist}
\end{sit}

\begin{defn}\label{def:tree-graph}
In Situation \ref{sit:concen-restrict}, let 
$V(\bar{G}(w_0)) \subseteq V(G(w_0))$
consist of admissible multidegrees $w$ such that there exist 
$v,v' \in V(\bar{\Gamma})$ connected by some edge $e$, with $w$ obtainable 
from $w_v$ by twisting $b$ times at $(e,v)$, for some $b$ with
$0 \leq b \leq b_{v,v'}$.

There is an edge $\epsilon$ from from $w$ to $w'$ in $\bar{G}(w_0)$ if there
exist $(e,v)$ in $\bar{\Gamma}$ such that $w'$ is obtained from $w$ by
twisting at $(e,v)$.
\end{defn}

Thus, $\bar{G}(w_0)$ is a tree, obtained by subdividing every edge of 
$\bar{\Gamma}$ into $b_{v,v'}$ edges, and replacing each edge with a
pair of directed edges in opposite directions. Note that in general
the edges of $\bar{G}(w_0)$ need not be edges of $G(w_0)$, but can be 
thought of as ``compositions'' of edges of $G(w_0)$. However, in the case 
that $\Gamma$ has
only two vertices, we have that $G(w_0)$ and $\bar{G}(w_0)$ are both chains, 
with the only difference being that $\bar{G}(w_0)$ is bounded by $w_{v_1}$
and $w_{v_2}$, while $G(w_0)$ is unbounded. Thus, our notation is consistent
with that of Notation \ref{not:barG-2-comp}.

We now move on to the geometric definitions and statements.

\begin{defn}\label{def:pct}
Let $X_0$ be a projective nodal curve, with dual graph $\Gamma$.
$X_0$ is of \textbf{pseudocompact type} if $\Gamma$ is a multitree.
\end{defn}

\begin{sit}\label{sit:pct} In Situation \ref{sit:concen-restrict},
suppose also that our $\Gamma$ is obtained as the dual graph of a 
given projective nodal curve $X_0$.
\end{sit}

\begin{notn}\label{not:twist-divs} In Situation \ref{sit:pct},
for each pair $(e,v)$ of an
edge and adjacent vertex of $\bar{\Gamma}$, let 
$D^{(e,v)}_0,\dots,D^{(e,v)}_{b_{v,v'}+1}$ be the sequence of effective 
divisors on $Z_v$ defined by $D^{(e,v)}_0=0$, and for $i\geq 0$,
$$D^{(e,v)}_{i+1}-D^{(e,v)}_{i}
=\sum_{\scriptsize \begin{matrix}\tilde{e}\text{ over }e:\\ 
\sigma(\tilde{e},v)\mu_v(\tilde{e}) \equiv 
- i \pmod{\bn(\tilde{e})}\end{matrix}} 
P_{\tilde{e}},$$
where $P_{\tilde{e}}$ denotes the node of $X_0$ corresponding to $\tilde{e}$,
and $\mu_v(\bullet)$ is obtained from $w_v$.
\end{notn}

Our main result is the following.

\begin{thm}\label{thm:equiv} In the situation of Definition
\ref{def:lls}, suppose further that $X_0$ is of pseudocompact type,
and we are in Situation \ref{sit:pct}.
Then given a tuple $(\sL,(V^v)_{v \in V(\Gamma)})$,
for each pair $(e,v)$ in $\bar{\Gamma}$, let $a^{(e,v)}_0,\dots,a^{(e,v)}_r$
be the multivanishing sequence of $V^v$ along $D^{(e,v)}_{\bullet}$.
Then the following are equivalent:
\begin{alist}
\itm $(\sL,(V^v)_v)$ is a limit linear series;
\itm \eqref{eq:gluing-map-1} has kernel of dimension at least $r+1$ for
every $w \in V(\bar{G}(w_0))$;
\itm for any $e \in E(\Gamma)$, with adjacent vertices $v,v'$, we have:
\begin{Ilist}
\itm for $\ell=0,\dots,r$, if $a^{(e,v)}_{\ell}=\deg D^{(e,v)}_j$ with $j$ 
critical for $D^{(e,v)}_{\bullet}$, then
\begin{equation}\label{eq:eh-genl}
a^{(e,v')}_{r-\ell} \geq \deg D^{(e,v')}_{b_{v,v'}-j};
\end{equation}
\itm there exist bases $s^{(e,v)}_0,\dots,s^{(e,v)}_r$ of $V^v$ and
$s^{(e,v')}_0,\dots,s^{(e,v')}_r$ of $V^{v'}$ such that
$$\ord_{D^{(e,v)}_{\bullet}}s^{(e,v)}_{\ell} = a^{(e,v)}_{\ell},
 \quad \text{ for } \ell=0,\dots,r,$$
and similarly for $s^{(e,v')}_{\ell}$,
and for all $\ell$ with \eqref{eq:eh-genl} an equality, we have
$$\vp^{(e,v)}_{j}(s^{(e,v)}_{\ell})=s^{(e,v')}_{r-\ell}$$
when we consider $s^{(e,v)}_{\ell} \in V^v(-D^{(e,v)}_{j})$ and 
$s^{(e,v')}_{r-\ell} \in V^{v'}(-D^{(e,v')}_{b_{v,v'}-j})$, where 
$j$ is as in (I), and $\vp_j$ is
as in Proposition \ref{prop:twists-basic}.
\end{Ilist}
\end{alist}
\end{thm}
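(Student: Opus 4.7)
The plan is to follow the structure of Lemma \ref{lem:compare-2-comps}: first reduce the global kernel condition on $V(G(w_0))$ to the bounded tree $V(\bar{G}(w_0))$, then exploit the tree structure of $\bar{G}(w_0)$ to apply the two-component analysis edge-by-edge in $\bar{\Gamma}$.

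For (a) $\Leftrightarrow$ (b), the direction (a) $\Rightarrow$ (b) is trivial since $V(\bar{G}(w_0)) \subseteq V(G(w_0))$. For the converse, I generalize Proposition \ref{prop:compare-restrict}: given $w \in V(G(w_0)) \setminus V(\bar{G}(w_0))$, I find a vertex $w' \in V(\bar{G}(w_0))$ and a vertex $v \in V(\bar{\Gamma})$ such that $w'$ is concentrated at $v$ and the minimal path from $w'$ to $w$ in $G(w_0)$ involves no twist at $v$. Such a $(w',v)$ exists by a combinatorial argument on the embedding of the tree $\bar{G}(w_0)$ in the lattice $V(G(w_0))$; informally, $w'$ is the projection of $w$ onto $\bar{G}(w_0)$, and $v$ is an endpoint of the edge of $\bar{\Gamma}$ whose associated sub-chain in $\bar{G}(w_0)$ contains $w'$. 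Care is required here, as a naive choice of $v$ need not work (it can happen that for every concentrated multidegree $w_v$, the minimal path to $w$ necessarily involves twists at $v$), but by using non-concentrated points $w' \in V(\bar{G}(w_0))$ as intermediate steps, the argument goes through. Once $(w', v)$ is chosen, the sheaf map $\sL_{w'} \to \sL_w$ is an inclusion on $Z_v$, so by Proposition \ref{prop:concentrated-inject} the induced $f_{w', w}$ is injective on the kernel of \eqref{eq:gluing-map-1} at $w'$, and its image lies in the kernel at $w$, which therefore has dimension at least $r+1$.

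The main work is (b) $\Leftrightarrow$ (c), which I approach edge-by-edge in $\bar{\Gamma}$ via a generalization of Proposition \ref{prop:maps-ker-compare}. For a fixed edge $e$ of $\bar{\Gamma}$ with endpoints $v, v'$ and $w$ on the chain from $w_v$ to $w_{v'}$ at position $i$ from $w_v$, I claim that the kernel of \eqref{eq:gluing-map-1} at $w$ is canonically identified, via restriction to $Z_v$ and $Z_{v'}$, with the kernel of the two-component map
\begin{equation*}
V^v(-D^{(e,v)}_i) \oplus V^{v'}(-D^{(e,v')}_{b_{v,v'}-i}) \to \sL^{v'}(-D^{(e,v')}_{b_{v,v'}-i})/\sL^{v'}(-D^{(e,v')}_{b_{v,v'}-i+1})
\end{equation*}
obtained from the quotients and $-\vp^{(e,v)}_i$. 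Granted this identification, Lemma \ref{lem:compare-2-comps} applies to the pair $(Z_v, Z_{v'})$, translating condition (b) at the multidegrees on the chain between $w_v$ and $w_{v'}$ into conditions (I) and (II) of (c) for the edge $e$; taking the conjunction over all edges of $\bar{\Gamma}$ yields the full equivalence.

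The main obstacle is the above identification: one must show that for $w$ on the chain from $w_v$ to $w_{v'}$, the constraints imposed by the $V^{v''}$ at other vertices $v'' \notin \{v, v'\}$ add nothing beyond the two-component kernel. The pseudocompact-type hypothesis is essential here. Removing $e$ from $\bar{\Gamma}$ splits it into two sub-trees, and for $v''$ on the $v$-side, the composition $H^0(\widetilde{X}_0, \sL_w) \to H^0(Z_{v''}, \sL^{v''})$ factors, up to invertible multiplication on $Z_{v''}$, through restriction to $Z_v$. Combined with condition (b) at $w_v$ and Corollary \ref{cor:concentrated-subbundle} (which identifies the kernel there with the preimage of $V^v$), this forces the preimage of $V^v$ to map into $V^{v''}$, and the condition propagates outward from $\{v, v'\}$ across $\bar{\Gamma}$. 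I would make this propagation precise by induction on $|V(\bar{\Gamma})|$, peeling off one leaf of $\bar{\Gamma}$ at a time, with base case exactly Lemma \ref{lem:compare-2-comps}.
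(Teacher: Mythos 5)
Your (b) $\Rightarrow$ (c) step is essentially the paper's: one only needs that restriction to $Z_v \cup Z_{v'}$ \emph{injects} the kernel of \eqref{eq:gluing-map-1} at a multidegree $w$ between $w_v$ and $w_{v'}$ into the two-component kernel (the argument of Proposition \ref{prop:concentrated-inject}), after which Lemma \ref{lem:compare-2-comps} applies; your stronger claim that the two kernels are ``canonically identified'' is not needed there and is not true a priori, since an element of the two-component kernel must still be extended across all other components and satisfy the conditions at every $v''\notin\{v,v'\}$ --- establishing this is exactly the content of the converse direction, which the paper proves by induction on components, peeling off a leaf and identifying the kernel on $X_0$ with a fibered product over $V^{v_2}$ of the kernel on $Z_{v_1}\cup Z_{v_2}$ and the kernel on the complementary subcurve. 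Your leaf-peeling sketch is in that spirit, though you do not address how to restrict a multidegree to a subcurve (Definition \ref{def:deg-restrict}), which is what makes the induction well posed.

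The genuine gap is your treatment of multidegrees outside $V(\bar{G}(w_0))$, i.e.\ the step (b) $\Rightarrow$ (a). Your plan is to find $w'\in V(\bar{G}(w_0))$ and $v$ with $w'$ concentrated at $v$ and the minimal path from $w'$ to $w$ avoiding twists at $v$, and you yourself note this can fail; but the fallback ``use non-concentrated $w'\in V(\bar{G}(w_0))$ as intermediate steps'' does not repair it. For a non-concentrated $w'$ you lose both ingredients of the argument: injectivity of $f_{w',w}$ on the kernel (which came from Proposition \ref{prop:concentrated-inject} via concentration) and, more basically, any control on which components $f_{w',w}$ vanishes on. Concretely, take a chain of three components $v_1-v_2-v_3$ and let $w$ be the twist of $w_{v_2}$ at $v_2$ (this $w$ lies in $V(G(w_0))$, satisfies $0\le t_{(e,v)}(w)\le b_{v,v'}$ for all $(e,v)$, but is not in $V(\bar{G}(w_0))$): every minimal path from $w_{v_1}$, $w_{v_2}$, or $w_{v_3}$ to $w$ involves a twist at $v_1$, $v_2$, or $v_3$ respectively, and for the adjacent non-concentrated vertices $w'$ of $\bar{G}(w_0)$ the map $f_{w',w}$ vanishes on two of the three components, so its injectivity on the kernel at $w'$ has no reason to hold. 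The paper instead deduces (a) from (c), not from (b) directly, by induction on the number of components, with two separate mechanisms: when some $t_{(e,v)}(w)<0$, one restricts $w$ to the subcurve on the $v$-side of $e$ (Definition \ref{def:deg-restrict}), applies the induction hypothesis there, and extends sections by zero to get an inclusion of the subcurve kernel into $H^0(\widetilde{X}_0,\sL_w)$; when $0\le t_{(e,v)}(w)\le b_{v,v'}$ for all $(e,v)$ (as in the example above), one uses the fibered-product decomposition over a leaf. Neither mechanism is the single-kernel injection you propose, and without something of this kind your proof of (a) from (b)/(c) is missing its essential step.
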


In (II) above, note that although Proposition \ref{prop:twists-basic} was
only stated for two-component curves, since we are only interested in
a given pair of adjacent vertices of $\Gamma$, the situation is no different
in our present more general case.

We first introduce some convenient notation. The following can be used to 
keep track of twisting at nodes:

\begin{notn}\label{not:tev} In Situation \ref{sit:concen-restrict},
given $w \in V(G(w_0))$, and $(e,v)$ adjacent in $\bar{\Gamma}$, 
let $t_{(e,v)}(w)$ be the number of twists at $(e,v)$ required to go
from $w_v$ to $w$ in a minimal number of twists.
\end{notn}

Note that $t_{(e,v)}(w)$ is well-defined, since the only way to cancel
a twist at $(e,v)$ is to twist at $(e,v')$, where $v'$ is the other vertex
adjacent to $v$. In addition, we have $t_{(e,v)}(w)+t_{(e,v')}(w)=b_{v,v'}$.

We can now define a notion of restriction of multidegrees to subcurves.
Of course, one can always restrict naively, but this turns out not to be
well behaved with respect to limit linear series, so instead we make the
following definition. 

\begin{defn}\label{def:deg-restrict} In Situation \ref{sit:concen-restrict},
let $X'_0$ be a connected subcurve of $X_0$, with dual graph $\Gamma'$. 
Then for any
$w \in V(G(w_0))$, define the \textbf{restriction} of $w$ to $X'_0$
as follows: starting from $w$, let $w'$ be the admissible multidegree
obtained by, for each pair $(e,v)$ in $\bar{\Gamma}$ where
$v \in \Gamma'$ but the other vertex $v'$ adjacent to $e$ is not in
$\Gamma'$, twisting $t_{(e,v)}(w)$ times at $(e,v')$. Then, the
restriction of $w$ to $X'_0$ is the naive restriction of $w'$.
\end{defn}

The reason for this choice of restriction, rather than the more naive
one, is that if we naively restrict an arbitrary $w$, it will no longer
be obtainable as a twist of the restrictions of the $w_v$. With our choice 
of restriction, even though we modify $w$, we will be able to understand 
the kernel of \eqref{eq:gluing-map-1} for a given $w$ in terms of the 
kernels of the restrictions to subcurves covering $X_0$; see the proof of 
Theorem \ref{thm:equiv} below.

Note that if $w \in V(\bar{G}(w_0))$, say between $w_v$ and $w_{v'}$, and
if $X'_0$ contains $Z_v$ and $Z_{v'}$, then in fact the restriction of
Definition \ref{def:deg-restrict} is simply the same as naive restriction.

\begin{proof}[Proof of Theorem \ref{thm:equiv}] First observe that because 
$V(\bar{G}(w_0)) \subseteq V(G(w_0))$, the implication (a) implies (b) is 
trivial. We will prove that (b) implies (c) and (c) implies (a), by 
induction on the number of components of $X_0$. The base case is that 
$X_0$ has two components, which is precisely Lemma 
\ref{lem:compare-2-comps}, together with Proposition 
\ref{prop:compare-restrict}.

Now, for the induction step, the basic observation is that condition (c) is
imposed on a pair of nodes at a time, so that (c) holds if and only if
for each pair $v_1,v_2$ of adjacent vertices of $\Gamma$, the restriction 
$(\sL_{w'_0}|_{Z_{v_1} \cup Z_{v_2}},(V^{v_1},V^{v_2}))$ also satisfies (c) 
for the curve $Z_{v_1} \cup Z_{v_2}$, where $w'_0$ is any element of 
$V(\bar{G}(w_0))$ lying between $w_{v_1}$ and $w_{v_2}$.
Note that $\deg \sL_{w'_0}|_{Z_{v_1} \cup Z_{v_2}}$ is not in general equal to
$d$, but is independent of the choice of $w'_0$.

Thus, to see that (b) implies (c), we suppose that
\eqref{eq:gluing-map-1} has kernel of dimension at least $r+1$ for
every $w \in V(\bar{G}(w_0))$, and we will show that if 
$v_1,v_2 \in V(\Gamma)$ are adjacent, then
$(\sL_{w'_0}|_{Z_{v_1} \cup Z_{v_2}},(V^{v_1},V^{v_2}))$ satisfies (c).
But suppose $w \in V(\bar{G}(w_0))$ lies between $w_{v_1}$ 
and $w_{v_2}$. Then note that $w$ agrees with both $w_{v_1}$ and $w_{v_2}$
away from $v_1$ and $v_2$ and the edges between them, so arguing as in 
Proposition \ref{prop:concentrated-inject},
the kernel of \eqref{eq:gluing-map-1} for $X_0$ 
injects into the kernel of \eqref{eq:gluing-map-1} for $Z_{v_1} \cup Z_{v_2}$
under restriction to $Z_{v_1} \cup Z_{v_2}$. 
Thus, by Lemma \ref{lem:compare-2-comps} we conclude that
$(\sL_{w'_0}|_{Z_{v_1} \cup Z_{v_2}},(V^{v_1},V^{v_2}))$ satisfies (c), as 
desired.

On the other hand, if (c) is satisfied, we prove the desired statement
by induction on the number of components. Given $w \in V(G(w_0))$, there
are two cases to consider.
First, if for some $(e,v)$, we
have $t_{(e,v)}(w)<0$, let $X'_0$ be the subcurve of 
$X_0$ corresponding to the connected component $\bar{\Gamma} \smallsetminus
\{e\}$ containing $v$.
Then if $w'$ is the multidegree obtained from $w$ in Definition
\ref{def:deg-restrict}, because $t_{(e,v)}(w)<0$, we have a map
$\sL_{w'} \to \sL_w$ which is injective on $\widetilde{X}'_0$; let
$Y$ be the subcurve of $\widetilde{X}_0$ on which it is injective,
and $Z$ the subcurve on which it vanishes.
Thus, $\widetilde{X}'_0 \subseteq Y$, and $\widetilde{X}_0=Y \cup Z$, 
and also $Y$ and $Z$ have no components in common. 
We thus have an inclusion $\sL_{w'}|_Y \to \sL_w|_Y$ whose image vanishes
at $Y \cap Z$, and it follows that we can extend by zero to get an inclusion
$$H^0(Y,\sL_{w'}|_{Y}) \hookrightarrow H^0(\widetilde{X}_0,\sL_w).$$
On the other hand, by construction we observe that $\sL_{w'}$ is trivial on
components on $Y$ not contained in $\widetilde{X}'_0$, 
so we have
$$H^0(\widetilde{X}'_0,\sL_{w'}|_{\widetilde{X}'_0})=
H^0(Y,\sL_{w'}|_{Y}),$$
inducing an inclusion
$$H^0(\widetilde{X}'_0,\sL_{w'}|_{\widetilde{X}'_0}) \hookrightarrow
H^0(\widetilde{X}_0,\sL_w).$$
Now, we have by hypothesis that (c)
is satisfied on $\widetilde{X}'_0$, so by the induction hypothesis,
the kernel of \eqref{eq:gluing-map-1} for $\widetilde{X}'_0$ has
dimension at least $r+1$ in multidegree $w'$, and using the above
inclusion, we get the same for $\widetilde{X}_0$ in multidegree $w$,
as desired.

The second case is that $t_{(e,v)}(w) \geq 0$ for all $(e,v)$, in which
case we necessarily have
$0 \leq t_{(e,v)}(w) \leq b_{v,v'}$. In this case, choose 
$v_1 \in V(\Gamma)$ which
is only adjacent to one other $v_2 \in V(\Gamma)$ (i.e., which is a leaf
of $\bar{\Gamma}$). Let $X'_0$ be the closure of the complement of 
$Z_{v_1}$ in $X_0$; then by hypothesis, (c) is satisfied for the restrictions
$(\sL_{w'_0}|_{Z_{v_1} \cup Z_{v_2}},(V^v,V^{v_2}))$ and
$(\sL_{w''_0}|_{X'_0},(V^v)_{v \neq v_1})$, where $w''_0$ is any
element of $V(\bar{G}(w_0))$ not lying between $w_{v_1}$ and $w_{v_2}$.
By the induction hypothesis, we conclude that \eqref{eq:gluing-map-1}
has kernel of dimension at least $r+1$ for $Z_{v_1} \cup Z_{v_2}$ in
multidegree $w'$ and for $X'_0$ in multidegree $w''$, where $w'$ and
$w''$ are the restrictions of $w$.
But because $0 \leq t_{(e,v)}(w) \leq b_{v,v'}$ for all $(e,v)$, 
the kernel of \eqref{eq:gluing-map-1} for $X_0$ in multidegree $w$
is simply the fibered product of the above two kernels over $V^{v_2}$,
and hence also has dimension at least $r+1$, as desired.
\end{proof}

\section{A smoothing theorem}\label{sec:smoothing}

In this section, we prove the following theorem, which says that -- just
as in the Eisenbud-Harris case -- when the space of limit linear series
on a curve of pseudocompact type has the expected dimension, then every 
limit linear series arises as the limit of linear series on smooth curves. 
In fact, our theorem is stronger even in the compact-type case, as it is 
not restricted to refined limit linear series. Our proof is fundamentally 
different from that of Eisenbud and Harris, although it still relies in 
the end on obtaining a lower bound on the dimension of a relative moduli 
space. The key ingredient is the theory of linked determinantal loci, 
developed in Appendix \ref{sec:link-det}. We also use a portion of
Theorem \ref{thm:equiv}, in essence to reduce to the two-component case.

\begin{thm}\label{thm:smoothing} 
Let $\pi:X \to B$ be a smoothing family, with special fiber $X_0$ a
curve of pseudocompact type.
Let $\bn$ be a chain structure on $X_0$, and
$\widetilde{\pi}:\widetilde{X} \to \widetilde{B}$ an extension of $\pi$ 
having fiber type $(X_0,\bn)$. Let $(\sO_v)_v$ be the induced enriched
structure on $X_0$. 

Given an admissible multidegree $w_0$ on the resulting $\widetilde{X}_0$, 
and $(w_v)_v$ as in Situation \ref{sit:concen-restrict},
if the moduli space $G^r_{\bar{w}_0}(X_0,\bn,(\sO_v)_v)$ has dimension
$\rho$ at a given point, then the corresponding limit linear series arises
as the limit of linear series on the geometric generic fiber of $\pi$.

More precisely, if $\pi:\widetilde{X}\to \widetilde{B}$ is any regular 
smoothing family of fiber type $(X_0,\bn)$, then the scheme
$\widetilde{G}^r_{\bar{w}_{0}}(\widetilde{X}/\widetilde{B},X_0,\bn,(\sO_v)_v)$
has universal relative dimension at least $\rho$ over $B$, and if the special
fiber $G^r_{\bar{w}_{0}}(X_0,\bn,(\sO_v)_v)$ has dimension exactly $\rho$
at a point, then 
$\widetilde{G}^r_{\bar{w}_{0}}(\widetilde{X}/\widetilde{B},X_0,\bn,(\sO_v)_v)$
is universally open at that point. If also the special fiber is geometrically
reduced at the given point, then 
$\widetilde{G}^r_{\bar{w}_{0}}(\widetilde{X}/\widetilde{B},X_0,\bn,(\sO_v)_v)$
is flat at that point.
\end{thm}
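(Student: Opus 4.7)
The plan is to realize $\widetilde{G}^r_{\bar{w}_0}(\widetilde{X}/\widetilde{B},X_0,\bn,(\sO_v)_v)$ as a generalized linked determinantal locus inside the ambient moduli scheme $\widetilde{P}^r_{w_\bullet}(\widetilde{X}/\widetilde{B},X_0,\bn,(\sO_v)_v)$ and then to apply the universal codimension and Cohen--Macaulayness results from Appendix \ref{sec:link-det}. Since the generic fiber is $G^r_d(X_\eta)$ of dimension at least $\rho$ by Proposition \ref{prop:proper-generic} and standard Brill--Noether theory, the goal is to match the abstract codimension bound with the Brill--Noether expected codimension on the nose.

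First I would cut the \textit{a priori} infinite intersection of \eqref{eq:lls-map-2} down to a finite one. By Theorem \ref{thm:equiv}, over the special fiber the conditions for $w \in V(G(w_0)) \setminus V(\bar{G}(w_0))$ are redundant; this statement extends to a neighborhood of the special fiber in $\widetilde{X}$ using Proposition \ref{prop:concentrated-inject} together with the fact that each $f_{w,w'}$ is an isomorphism on the generic fiber. The pseudocompact type hypothesis ensures that $\bar{G}(w_0)$ is a tree, so the resulting finite collection of determinantal conditions organizes into a tree-indexed family of linked maps of vector bundles on $\Pic^{w_0}(\widetilde{X}/\widetilde{B})$, precisely the kind of structure for which the appendix is designed.

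The main technical step is to carry out the expected-codimension accounting and to verify that the maps \eqref{eq:lls-map-2} fit into the linking framework of Appendix \ref{sec:link-det}. The ambient $\widetilde{P}^r_{w_\bullet}$ fibers over $\Pic^{w_0}(\widetilde{X}/\widetilde{B})$ (of relative dimension $g$ over $\widetilde{B}$) as a product of Grassmann-type schemes $G^r_{w_v}$ indexed by $v \in V(\Gamma)$. Each edge of $\bar{G}(w_0)$ contributes an $(r+1)$st vanishing-locus condition whose expected codimension is read off from the ranks of the relevant $f_{w,w'}$; using the tree structure to avoid redundancy, these should sum to precisely $\dim(\widetilde{P}^r_{w_\bullet}/\widetilde{B}) - \rho$, matching the classical Brill--Noether expected codimension on the generic fiber as a sanity check. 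I expect this verification to be the main obstacle, since it requires matching the combinatorics of the $w_v$ and the chain structure $\bn$ with the abstract rank hypotheses of the appendix.

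Granted all this, Appendix \ref{sec:link-det} gives: (i) a universal upper bound on the codimension of $\widetilde{G}^r_{\bar{w}_0}$ in $\widetilde{P}^r_{w_\bullet}$ by the expected codimension, hence universal relative dimension $\geq \rho$ over $B$; and (ii) Cohen--Macaulayness at points where this bound is tight, granted suitable reducedness. If the special fiber has dimension exactly $\rho$ at a point $x$, then the universal lower bound together with $\dim \widetilde{B} = 1$ forces the total space to have local dimension exactly $\rho + 1$ at $x$, so the expected codimension is achieved there, and upper semicontinuity of fiber dimension, together with the universal lower bound, gives universal openness at $x$. For flatness, the achievement of expected codimension together with reducedness of the special fiber at $x$ would give Cohen--Macaulayness of the total space at $x$ via (ii); then the standard miracle flatness criterion for a Cohen--Macaulay scheme over a regular $1$-dimensional base with equidimensional fibers gives flatness at $x$.
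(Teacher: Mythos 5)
Your high-level strategy (reduce to $\bar{G}(w_0)$ via Theorem \ref{thm:equiv}, then invoke Appendix \ref{sec:link-det}) is the same as the paper's, but the step you defer as ``the main obstacle'' is where the proposal actually breaks, and it breaks for a structural reason: you cannot run the argument inside $\widetilde{P}^r_{w_{\bullet}}(\widetilde{X}/\widetilde{B},X_0,\bn,(\sO_v)_v)$ with the maps \eqref{eq:lls-map-2} as they stand. The pushforwards $p_{1*}\widetilde{\sM}_w$ are not locally free (their ranks jump), so these maps do not form $s$-linked chains of vector bundles in the sense of Definition \ref{defn:s-linked} and the appendix does not apply to them; worse, $\widetilde{P}^r_{w_{\bullet}}$ is built from the schemes $G^r_{w_v}(\widetilde{X}/\widetilde{B})$, which are neither smooth nor of controlled dimension, so an upper bound on codimension inside it gives no lower bound on the dimension of the limit linear series locus, and your target quantity $\dim(\widetilde{P}^r_{w_\bullet}/\widetilde{B})-\rho$ is not even well defined. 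The paper's proof circumvents this by twisting by a sufficiently ample divisor $D=\sum_v D_v$ with $D_v\cap X_0$ meeting only $Z_v$, so that each $p_{1*}(\widetilde{\sM}_w(D))$ is an honest vector bundle of rank $d+\deg D+1-g$; it then works in the smooth ambient scheme $G=\prod_v G\bigl(r+1,p_{1*}(\widetilde{\sM}_{w_v}(D))\bigr)$ over $\Pic^{w_0}$, imposes vanishing of each $\sV_v$ along $D_v$, and applies Theorem \ref{thm:link-det} once per edge of $\bar{\Gamma}$ (the appendix handles chains, not trees; the tree structure enters only through $|E(\bar{\Gamma})|=|V(\Gamma)|-1$ in the codimension count). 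The real content is then the set-theoretic identification of the resulting locus with $\widetilde{G}^r_{\bar{w}_0}$, proved fiberwise using the factorization of the twist maps, the vanishing-on-$D$ argument, and the equivalence (a)$\Leftrightarrow$(b) of Theorem \ref{thm:equiv}; none of this is supplied by a formal ``fit into the linking framework'' step.

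The second gap is your appeal to ``Cohen--Macaulayness at points where the bound is tight'': Appendix \ref{sec:link-det} proves only the codimension bound of Theorem \ref{thm:link-det} and no Cohen--Macaulayness statement, nor could one get it from the classical Eagon--Northcott-type results, since the whole point is that these loci have codimension far below the naive determinantal expectation. So your miracle-flatness route to the last two assertions has no support. The paper instead obtains universal relative dimension at least $\rho$ from Corollary 5.1 of \cite{os21} (which is tailored to a smooth ambient space cut down by a known number of conditions), and then deduces universal openness, and flatness under geometric reducedness of the special fiber, directly from Proposition 3.7 of \cite{os21}, with no Cohen--Macaulayness anywhere.
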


In the above, we use the relative dimension terminology introduced in
\cite{os21}.

\begin{proof}
The idea is to give a slightly different construction of the relative
limit linear series moduli space
$\widetilde{G}^r_{\bar{w}_{0}}(\widetilde{X}/\widetilde{B},X_0,\bn,(\sO_v)_v)$,
taking ideas from the proof of Theorem 5.3 of \cite{os8} and using the
linked determinantal loci developed in Appendix \ref{sec:link-det}. We can
work set-theoretically, since our goal is a dimension statement.
As in our earlier construction, start with the scheme 
$\Pic^{w_0}(\widetilde{X}_0)$, which is smooth over $B$ of relative 
dimension $g$, and let $\widetilde{\sM}$ be the universal line bundle, 
with $\widetilde{\sM}_w$ the induced line bundle in multidegree $w$ for
each $w \in V(G(w_0))$. Next, choose a sufficiently $\pi$-ample divisor $D$ 
on $\widetilde{X}$; using our sections of $\pi$, we may assume that
$D=\sum_{v \in V(\Gamma)} D_v$, where $D_v \cap X_0$ meets only $Z_v$. 
Note that we do not need to twist up on the exceptional components,
since they are rational and our multidegrees are always nonnegative on them.
We then have for each $w$ that 
$p_{1*} (\widetilde{\sM}_w(D))$ is locally free of rank
$d+\deg D + 1-g$, and commutes with base change. Let $G$ be the fibered 
product over $\Pic^{w_0}(\widetilde{X}_0)$ of the schemes
$G(r+1,p_{1*} (\widetilde{\sM}_{w_v}(D)))$, where $v$ ranges over
$V(\Gamma)$. This is thus smooth over $B$ of relative dimension 
\begin{multline*}g+|V(\Gamma)|(r+1)(d+\deg D+1-g-(r+1))\\
=g+|V(\Gamma)|(r+1)(d+\deg D-r-g).\end{multline*}

For each $v$, let $\sV_v$ be (the pullback to $G$ of) the universal subbundle 
of $p_{1*} (\widetilde{\sM}_{w_v}(D))$. Let $G'$ be the closed 
subset of $G$ obtained by imposing that for each $v$, the composed map 
$$\sV_v \to p_{1*} (\widetilde{\sM}_{w_v}(D)) \to
p_{1*} (\widetilde{\sM}_{w_v}(D)|_{D_v})$$
vanishes identically,
and by intersecting, for each $e \in E(\bar{\Gamma})$ having adjacent vertices 
$v,v'$, with the linked determinantal locus associated to the chain
$p_{1*} (\widetilde{\sM}_w(D))$ for $w$ between $w_v$ and $w_{v'}$
together with the subbundles $\sV_v$ and $\sV_{v'}$.
Then our key claim is that $G'$ is equal to
$\widetilde{G}^r_{\bar{w}_{0}}(\widetilde{X}/\widetilde{B},X_0,\bn,(\sO_v)_v)$.
Given the claim, we are done: the former conditions impose codimension at
most $(r+1)(\sum_{v} \deg D_v)=(r+1) \deg D$, 
and the latter impose, by Theorem \ref{thm:link-det}, codimension at most
\begin{multline*}|E(\bar{\Gamma})|(r+1)(d+\deg D+1-g-(r+1)) \\
=(|V(\Gamma)|-1)(r+1)(d+\deg D-r-g).\end{multline*}
Subtracting the above maximal codimensions from the relative dimension of
$G$, we are left with $g+(r+1)(d-r-g)=\rho$, and according to Corollary
5.1 of \cite{os21}, we find that
$\widetilde{G}^r_{\bar{w}_{0}}(\widetilde{X}/\widetilde{B},X_0,\bn,(\sO_v)_v)$
has universal relative dimension at least $\rho$ over $B$, as desired.
The assertions on universal openness and flatness in the case that 
the special fiber has dimension exactly $\rho$ at a point then follow from
Proposition 3.7 of \cite{os21}.

We are thus reduced to proving the claim. On the level of points, we 
analyze first the generic fiber $X_{\eta}$, and then the special fiber 
$\widetilde{X}_0$. Over the generic fiber, the maps between the $\sL_w$
are all isomorphisms, so the linked determinantal conditions in the
definition of $G'$ imply that the $V_v$ all map to one another under
these isomorphisms, and the condition that each $V_v$ vanish on $D_v$
implies that they all vanish on all of $D$. Thus, for a fixed choice of $v$,
we have that points of $G'$ on the generic fiber are all uniquely determined 
by a choice of $V_v$ contained in $\sL_{w_v}$, which is the same as 
$\widetilde{G}^r_{\bar{w}_{0}}(\widetilde{X}/\widetilde{B},X_0,\bn,(\sO_v)_v)$.
Next, on the special fiber, we are
asserting the following: given a line bundle $\sL$ of multidegree $w_0$ 
and a tuple $(V_v)_v$ with 
$V_v \subseteq \Gamma(\widetilde{X}_0,\sL_{w_v}(D))$,
if each $V_v$ vanishes on $D_v$, and for each $w \in V(\bar{G}(w_0))$
between $w_v$ and $w_{v'}$, the map
\begin{equation}\label{eq:gluing-two}
\Gamma(\widetilde{X}_0,\sL_w(D)) \to 
\Gamma(\widetilde{X}_0,\sL_{w_v}(D))/V_v \oplus 
\Gamma(\widetilde{X}_0,\sL_{w_{v'}}(D))/V_{v'} 
\end{equation}
has kernel of dimension at least $r+1$, then in fact each $V_v$ is 
contained in $\Gamma(\widetilde{X}_0,\sL_{w_v})$, and the map
\begin{equation}\label{eq:gluing-global}
\Gamma(\widetilde{X}_0,\sL_w) \to 
\bigoplus_v \Gamma(\widetilde{X}_0,\sL_{w_v})/V_v 
\end{equation}
has kernel of dimension at least $r+1$ for all $w \in V(G(w_0))$.
Our first observation is that for all $v,v'$, we must have $V_v$
mapping into $V_{v'} \subseteq \Gamma(\widetilde{X}_0,\sL_{w_{v'}}(D))$
under the natural twisting maps. Because the maps $\sL_{w_v} \to \sL_{w_{v'}}$
always factor as a sequence of such maps between adjacent vertices, it is
enough to prove this when $v,v'$ are adjacent. In this case, we consider
\eqref{eq:gluing-two} in the case $w=w_v$, noting that the kernel is
necessarily contained in $V_v$. Then our hypothesis implies that the 
kernel is all of $V_v$, and hence that $V_v$ maps into $V_{v'}$, as 
desired. 
Our next observation is that for $w \in V(\bar{G}(w_0))$, under
our hypotheses we have that the kernel of \eqref{eq:gluing-two} is
identified with the kernel of 
\begin{equation}\label{eq:gluing-global-prime}
\Gamma(\widetilde{X}_0,\sL_w(D)) \to 
\bigoplus_{v''} \Gamma(\widetilde{X}_0,\sL_{w_{v''}}(D))/V_{v''}.
\end{equation}
Indeed, this follows from the first observation, together with the fact 
that if $w$ lies between $w_v$ and $w_{v'}$, then for any $v''$ the map 
$\sL_w \to \sL_{w_{v''}}$ always factors through either $\sL_{w_v}$ or
$\sL_{w_{v'}}$.

It then follows that the kernel of \eqref{eq:gluing-global-prime} vanishes
on $D$ for each $w$, since
for each $v$, the map $\sL_{w} \to \sL_{w_{v}}$ is injective on 
$Z_{v}$, so if $V_{v}$ vanishes on $D_{v}$ the kernel of 
\eqref{eq:gluing-global-prime} vanishes on $D_v$ as well. Since the
$D_v$ are disjoint, we conclude that the kernel vanishes on $D$.
Considering the case $w=w_v$, we conclude in particular that each
$V_v$ vanishes on $D$, as desired. It follows that the kernel of
\eqref{eq:gluing-global-prime} is identified with the kernel of
\eqref{eq:gluing-global}, so we have proved the desired statement for
$w \in V(\bar{G}(w_0))$. Moreover, if we set $V^v$ to be the image of
$V_v$ in $\Gamma(Z_v,\sL^v)$, we see that the kernel of 
\eqref{eq:gluing-global} is identified with the kernel of 
\eqref{eq:gluing-map-1}, so the equivalence of (a) and (b) in Theorem
\ref{thm:equiv} then yields the desired statement for all $w \in V(G(w_0))$.
\end{proof}

\begin{rem}\label{rem:smooth-genl} Note that despite the pseudocompact type
hypothesis, our proof of the smoothing theorem was built around our general 
definition of limit linear series rather than the equivalent definition of
\S \ref{sec:pseudocompact-type}. In fact, we expect that a similar proof
should be possible in full generality, with the main difficulty being the
need for a much more general theory of linked Grassmannians. In our proof,
due to the special form of curves of pseudocompact type, we were able to
inductively reduce to what was, in essense, the ``two-component'' version
of the linked Grassmannian, but in general no such reduction is possible.
There is some evidence, in the form of examples and of parallel
results for local models of certain Shimura varieties (see, for instance,
Goertz \cite{go2}), that such a general theory of linked Grassmannians 
should exist, but we expect that it will be substantially more difficult 
than the special case we have used here.
\end{rem}

We conclude with a scheme structure comparison result involving the
construction carried out in the proof of Theorem \ref{thm:smoothing}. This
relates our construction to the related definitions for the higher-rank
case given in \S 4.2 of \cite{os20}, and more importantly will be used in
\cite{o-m1} to prove a comparison theorem in the rank-$1$, compact type
case between our scheme structure and the scheme structure given by
the Eisenbud-Harris definition. 

\begin{notn}\label{notn:grd-fam-2} Now suppose that we are in
the situation of Theorem \ref{thm:smoothing}, or of Theorem \ref{thm:equiv},
in which case we take $B=\widetilde{B}$ to be a point.
Let
$\widetilde{G}'^r_{\bar{w}_{0}}(\widetilde{X}/\widetilde{B},X_0,\bn,(\sO_v)_v)$ 
be the closed subscheme of the space
$\widetilde{P}^r_{w_{\bullet}}(\widetilde{X}/\widetilde{B},X_0,\bn,(\sO_v)_v)$
defined by the intersection of the $(r+1)$st 
vanishing loci of the maps \eqref{eq:lls-map-2}, as $w$ varies over 
$V(\bar{G}(w_0))$.
\end{notn}

Thus, \textit{a priori} we have that
$\widetilde{G}^r_{\bar{w}_{0}}(\widetilde{X}/\widetilde{B},X_0,\bn,(\sO_v)_v)$ 
is a closed subscheme of
$\widetilde{G}'^r_{\bar{w}_{0}}
(\widetilde{X}/\widetilde{B},X_0,\bn,(\sO_v)_v)$, 
and Theorem \ref{thm:equiv} tells us that they are supported on the same
subset.

\begin{prop}\label{prop:scheme-compare}
The moduli scheme 
$\widetilde{G}'^r_{\bar{w}_{0}}(\widetilde{X}/\widetilde{B},X_0,\bn,(\sO_v)_v)$ 
is proper over $\Pic^{w_0}(\widetilde{X}/\widetilde{B})$, and in the case
that $\pi$ is a smoothing family, its generic 
fiber is naturally identified with $G^r_d(X_{\eta})$. 

Moreover, the set-theoretic construction of
$\widetilde{G}^r_{\bar{w}_{0}}(\widetilde{X}/\widetilde{B},X_0,\bn,(\sO_v)_v)$ 
described in the proof of Theorem \ref{thm:smoothing} yields a scheme
structure agreeing with
$\widetilde{G}'^r_{\bar{w}_{0}}
(\widetilde{X}/\widetilde{B},X_0,\bn,(\sO_v)_v)$.
\end{prop}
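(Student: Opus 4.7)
The first two assertions are straightforward extensions of the arguments in Section 3. For properness, note that $\widetilde{P}^r_{w_{\bullet}}(\widetilde{X}/\widetilde{B},X_0,\bn,(\sO_v)_v)$ is constructed as a fibered product over $\Pic^{w_0}(\widetilde{X}/\widetilde{B})$ of the proper schemes $G^r_{w_v}(\widetilde{X}/\widetilde{B})$, so is itself proper; since $\widetilde{G}'^r_{\bar{w}_{0}}$ is cut out by closed conditions inside $\widetilde{P}^r_{w_{\bullet}}$, it is likewise proper. For the generic fiber (when $\pi$ is a smoothing family), the twisting maps $f_{w,w'}$ are all isomorphisms on $X_{\eta}$, so exactly as in the proof of Proposition \ref{prop:proper-generic}, fixing any $v_0 \in V(\Gamma)$ and an arbitrary $V_{v_0} \subseteq H^0(X_\eta,\sL_{w_{v_0}})$ uniquely determines each $V_v$ as its image under the twisting isomorphism, and the resulting tuple automatically satisfies the $(r+1)$st vanishing condition on every $w \in V(\bar{G}(w_0))$; Proposition B.3.4 and Lemma B.2.3 of \cite{os20} upgrade this to a scheme-theoretic identification with $G^r_d(X_\eta)$.

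The bulk of the work is the final scheme structure comparison. The ambient space $\widetilde{P}^r_{w_{\bullet}}$ embeds as a closed subscheme of $G$ via the natural inclusions $p_{1*}\widetilde{\sM}_{w_v} \hookrightarrow p_{1*}\widetilde{\sM}_{w_v}(D)$, and is cut out there by the condition that each $V_v$ vanish on \emph{all} of $D$. My plan is to show (a) that scheme-theoretically within $G'$, each $\sV_v$ vanishes on all of $D$, producing a morphism $G' \to \widetilde{P}^r_{w_{\bullet}}$, and (b) that under this morphism the defining conditions on $G'$ translate exactly into those of $\widetilde{G}'^r_{\bar{w}_{0}}$, yielding mutually inverse morphisms. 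For (a), I would apply the linked determinantal condition associated to an edge $e$ with endpoints $v,v'$ at the multidegree $w = w_v$: just as in the proof of Theorem \ref{thm:smoothing}, the kernel of the pairwise map at $w_v$ is the preimage of $\sV_{v'}$ under $f_{w_v,w_{v'}}\colon \sV_v \to p_{1*}\widetilde{\sM}_{w_{v'}}(D)$, and the linked determinantal condition combined with Lemma B.2.3 (iv) of \cite{os20} forces this preimage to be a rank-$(r+1)$ subbundle, hence equal to $\sV_v$. Thus $\sV_v$ maps scheme-theoretically into $\sV_{v'}$. Since $D$ is disjoint from the nodes, the twisting section $s_{w_v,w_{v'}}$ is a unit on a neighborhood of $D_{v'}$, so the vanishing of $\sV_{v'}$ on $D_{v'}$ pulls back to vanishing of $\sV_v$ on $D_{v'}$; traversing the tree $\bar{\Gamma}$ outward from $v$ shows $\sV_v$ vanishes on every $D_{v''}$, and hence on all of $D$.

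For (b), once each $\sV_v$ lies in the untwisted sheaf $p_{1*}\widetilde{\sM}_{w_v}$, the same unit argument near $D$ shows that for every $w \in V(\bar{G}(w_0))$ and every pair $v,v'$ of endpoints of an edge through which $w$ passes, the kernel of the pairwise map defining the linked determinantal locus is contained in $p_{1*}\widetilde{\sM}_w$ and agrees scheme-theoretically with the kernel of its untwisted counterpart. Using the scheme-theoretic factorizations $\sV_v \to \sV_{v'}$ from (a), iterated across the tree $\bar{\Gamma}$, the kernel-comparison argument at the end of the proof of Theorem \ref{thm:smoothing} upgrades to a scheme-theoretic identification between the pairwise kernel and the kernel of the full direct-sum map \eqref{eq:lls-map-2}; a final invocation of Proposition B.3.4 of \cite{os20} identifies the corresponding $(r+1)$st vanishing loci. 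Running the same arguments starting from $\widetilde{G}'^r_{\bar{w}_{0}}$ produces an inverse morphism, completing the comparison. The main obstacle is the careful coordination of these scheme-theoretic statements, in particular ensuring compatibility under arbitrary base change; the enabling observation is very simple, namely that $D$ is disjoint from all nodes and exceptional components, so all twisting sections are units on a neighborhood of $D$, making every identification in a neighborhood of $D$ canonical.
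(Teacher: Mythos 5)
Your proposal is correct and takes essentially the same approach as the paper: the first part reduces to the argument of Proposition \ref{prop:proper-generic}, and the scheme-structure comparison is carried out exactly as in the paper's proof, by using the pairwise vanishing condition at $w=w_v$ together with Proposition B.3.4 and Lemma B.2.3 (iv) of \cite{os20} to force $\sV_v$ to map into $\sV_{v'}$, propagating vanishing on $D$ via the isomorphisms over the $D_{v'}$, and then identifying the kernels (twisted versus untwisted, pairwise versus full direct sum, using the factorization through $w_v$ or $w_{v'}$) so that the $(r+1)$st vanishing loci agree. One small correction: your closing claim that \emph{all} twisting sections are units in a neighborhood of $D$ is not literally true, since $s_{v''}$ vanishes on all of $Z_{v''} \supseteq D_{v''}$; the statement your individual steps actually (and correctly) rely on is that the minimal path from the relevant $w$ to $w_{v''}$ involves no twist at $v''$, so the corresponding composite map is an isomorphism in a neighborhood of $D_{v''}$.
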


\begin{proof} The proof of the first part is the same as for Proposition
\ref{prop:proper-generic}. For the second part, we need to show that the
set-theoretic analysis in the proof of Theorem \ref{thm:smoothing} works
on the level of $T$-valued points if we consider only $w \in V(\bar{G}(w_0))$.
Thus, suppose we are given a $T$-valued tuple $(\sL,(V_v)_v)$, where
each $V_v$ is a subbundle of $p_{1*} \sL_{w_v}(D)$, and for any
$w \in V(\bar{G}(w_0))$ between $w_v$ and $w_{v'}$, the map
\begin{equation}\label{eq:van-again}
p_{1*} \sL_w(D) \to ((p_{1*} \sL_{w_v}(D))/V_v) \oplus 
((p_{1*} \sL_{w_{v'}}(D))/V_{v'}) 
\end{equation}
has $(r+1)$st vanishing locus equal to $T$, and also that the composed maps 
$$V_v \to p_{1*} \sL_{w_v}(D) \to 
p_{1*} (\sL_{w_v}(D)|_{D_v}$$
are zero for each $v$. We want to show that in fact all the $V_v$ vanish
on all of $D$, and for all $w \in V(\bar{G}(w_0))$, the $(r+1)$st vanishing
locus of \eqref{eq:t-vald-2} is all of $T$.

First, given $v,v'$ adjacent, setting $w=w_v$ in \eqref{eq:van-again}, we
see by Proposition B.3.4 and Lemma B.2.3 (iv) of \cite{os20} that the
kernel must be equal to $V_v$,
and thus that $V_v$ maps into $V_{v'}$. Traversing 
$\bar{\Gamma}$ in this way we conclude that each $V_v$ maps into each
$V_{v'}$ for any $v' \neq v$. We then observe that for any $w$, and any
$v'$, the map 
$p_{1*} (\sL_w(D)|_{D_{v'}}) \to p_{1*} (\sL_{w_{v'}}(D)|_{D_{v'}})$
is an isomorphism, 
so since $V_v$ maps into $V_{v'}$ and $V_{v'}$ vanishes on $D_{v'}$, we
conclude that $V_v$ likewise vanishes on $D_{v'}$. Since $D=\sum_{v'} D_{v'}$,
we find that each $V_v$ vanishes on all of $D$, and may be considered as
a subbundle of $p_{1*} \sL_{w_v}$. Similarly, we see
that the kernel of \eqref{eq:van-again} is (universally) identified with
the kernel of 
\begin{equation}\label{eq:van-again-2}
p_{1*} \sL_w \to ((p_{1*} \sL_{w_v})/V_v) \oplus 
((p_{1*} \sL_{w_{v'}})/V_{v'}),
\end{equation}
so by Proposition B.3.2 of \cite{os20} we have that the $(r+1)$st vanishing 
loci of the two maps agree. But then, again using that each $V_v$ maps into
each other $V_{v'}$, and the map from $\sL_w$ to $\sL_{w_{v''}}$ factors
through $\sL_{w_v}$ or $\sL_{w_{v'}}$ if $w$ lies between $w_v$ and $w_{v'}$,
we see that the kernel of \eqref{eq:van-again-2} is also universally 
identified with the kernel of \eqref{eq:t-vald-2}, giving the desired
statement.

Note that neither the construction from Theorem \ref{thm:smoothing}
nor our analysis of its scheme
structure depended on $B$ being positive-dimensional, and in
particular we also conclude the desired statement in the case that $B$ is 
a point. 
\end{proof}

\appendix
\section{Linked determinantal loci}\label{sec:link-det}

In this appendix, we develop a theory of ``linked determinantal loci,''
which are in essence a determinantal locus analogue of the linked
Grassmannian developed in Appendix A of \cite{os8}. A preliminary 
definition is the following:

\begin{defn}\label{defn:s-linked} Let $S$ be a scheme, and $d,n$ be positive 
integers. Suppose that $\sE_1,\dots,\sE_n$ are vector bundles of rank $d$ 
on $S$ and we have morphisms
$$f_i:\sE_i \to \sE_{i+1},\quad f^i:\sE_{i+1} \to \sE_i$$
for each $i=1,\dots,n-1$. Given $s \in \Gamma(S,\sO_S)$, we say that
$\sE_{\bullet}=(\sE_i,f_i,f^i)_i$ is an {\bf $s$-linked chain} if the
following conditions are satisfied:
\begin{Ilist}
\itm For each $i=1,\dots,n$,
$$f_i \circ f^i = s \cdot \id, \text{ and } f^i \circ f_i = s \cdot \id.$$
\itm On the fibers of the $\sE_i$ at any point with $s=0$, we have that for
each $i=1,\dots,n-1$,
$$\ker f^i = \im f_i, \text{ and } \ker f_i = \im f^i.$$
\itm On the fibers of the $\sE_i$ at any point with $s=0$, we have that for
each $i=1,\dots,n-2$,
$$\im f_i \cap \ker f_{i+1}=(0),\text{ and }\im f^{i+1} \cap \ker f^i = (0).$$
\end{Ilist}
\end{defn}

This is precisely the condition required for the ambient chain of vector
bundles in the definition of a linked Grassmannian in \cite{os8}, although
the terminology was introduced later, in \cite{o-t1}. We then define:

\begin{defn}\label{defn:link-det} Let $\sE_{\bullet}$ be an $s$-linked
chain on a scheme $S$. Given $r>0$, suppose $\sF_1,\sF_n$ are rank-$r$
subbundles of $\sE_1$ and $\sE_n$ respectively. Then the associated
\textbf{linked determinantal locus} is the closed subscheme of $S$ on 
which the morphisms
\begin{equation}\label{eq:link-det}
\sE_i \to \left(\sE_1/\sF_1\right) \oplus \left(\sE_n/\sF_n\right)
\end{equation}
have rank less than or equal to $d-r$ for all $i=1,\dots,n$.
\end{defn}

In Definition \ref{defn:link-det}, the necessary morphisms $\sE_i \to \sE_j$
are obtained simply by composing the $f_i$ or $f^i$, as appropriate.

Thus, a linked determinantal locus is by definition an intersection of
$n$ determinal loci in $S$, for morphisms from vector bundles of rank $d$
to vector bundles of rank $2d-2r$. The standard codimension bound for
determinantal loci then implies that (each irreducible component of) a 
linked determinantal locus has codimension at most 
$n(d-(d-r))(2d-2r-(d-r))=nr(d-r)$. However, the
structure imposed by our hypotheses implies that in fact, the codimension
is far smaller. Our main theorem is the following.

\begin{thm}\label{thm:link-det} Each irreducible component of a linked 
determinantal locus has codimension at most $r(d-r)$ in $S$.
\end{thm}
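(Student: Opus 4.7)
The plan is to exhibit $Z$, set-theoretically, as the pullback of a natural codimension-$r(d-r)$ subscheme of the smooth $S$-scheme $\Gr(r, \sE_1) \times_S \Gr(r, \sE_n)$, and then conclude via the standard intersection codimension inequality. Let $L := L\Gr(r, \sE_\bullet)$ denote the linked Grassmannian of \cite{os8}, Appendix A, parametrizing $n$-tuples $(\sF'_i)_i$ of rank-$r$ subbundles of the $\sE_i$ satisfying the linking containments $f_i(\sF'_i) \subseteq \sF'_{i+1}$ and $f^i(\sF'_{i+1}) \subseteq \sF'_i$. By the main theorem of that appendix, $L$ is flat and proper over $S$ with all fibers equidimensional of dimension $r(d-r)$, so every component of $L$ has dimension $\dim S + r(d-r)$. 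Let $\phi \colon L \to \Gr(r, \sE_1) \times_S \Gr(r, \sE_n)$ be the endpoint morphism $(\sF'_i)_i \mapsto (\sF'_1, \sF'_n)$; since $\phi$ is proper its image $Y$ is closed, and over the open locus $\{s \neq 0\} \subseteq S$ the linking maps are isomorphisms, so a compatible tuple is determined by its first term and $\phi$ is a closed immersion there. Hence $\phi$ is generically finite, $\dim Y = \dim L = \dim S + r(d-r)$, and $Y$ has codimension exactly $r(d-r)$ in the ambient $\Gr(r, \sE_1) \times_S \Gr(r, \sE_n)$, which is smooth over $S$ of relative dimension $2r(d-r)$.

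The key structural step is to check that, set-theoretically, $Z = \sigma^{-1}(Y)$, where $\sigma = (\sF_1, \sF_n) \colon S \to \Gr(r, \sE_1) \times_S \Gr(r, \sE_n)$ is the section determined by the prescribed endpoint subbundles. The inclusion $\sigma^{-1}(Y) \subseteq Z$ is immediate: a linked extension $(\sF'_i)_i$ of $(\sF_1|_t, \sF_n|_t)$ places each $\sF'_i$ inside the kernel of $\sE_i|_t \to (\sE_1/\sF_1)|_t \oplus (\sE_n/\sF_n)|_t$, forcing the rank to be at most $d-r$. For the reverse inclusion, at a point $t \in Z$ the fiberwise kernels $V_i \subseteq \sE_i|_t$ have rank at least $r$; since $V_1 \subseteq \sF_1|_t$ and $V_n \subseteq \sF_n|_t$ one has in fact $V_1 = \sF_1|_t$ and $V_n = \sF_n|_t$, and the identities $f^i f_i = f_i f^i = s \cdot \id$ give $f_i(V_i) \subseteq V_{i+1}$ and $f^i(V_{i+1}) \subseteq V_i$. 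So the $V_i$ form a linked subchain of fiberwise ranks at least $r$, and a linear algebra argument inside the $V_i$, using conditions (II) and (III) of Definition \ref{defn:s-linked} to control the $s=0$ degenerations, produces a rank-$r$ compatible sub-tuple $(\sF'_i \subseteq V_i)_i$ extending the prescribed endpoints.

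Granted $Z = \sigma^{-1}(Y)$, the theorem is immediate from the intersection codimension inequality on the smooth scheme $\Gr(r, \sE_1) \times_S \Gr(r, \sE_n)$: since $\sigma(S)$ and $Y$ have codimensions $2r(d-r)$ and $r(d-r)$ respectively, each component of $\sigma(S) \cap Y$ has codimension at most $3r(d-r)$ in the ambient, hence each component of $Z \cong \sigma(S) \cap Y$ has codimension at most $r(d-r)$ in $\sigma(S) \cong S$. I expect the main obstacle to be the reverse inclusion in the identification step: on the open locus $\{s \neq 0\}$ the rank-$r$ interpolation $(\sF'_i)$ is automatic, but at $s=0$ the $V_i$ may have rank strictly larger than $r$ and the linking maps have nontrivial kernels, so one must exhibit the intermediate $\sF'_i \subseteq V_i$ compatibly with both prescribed endpoints and with all the linking containments simultaneously. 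This is where the explicit $s$-linking structure, particularly condition (III), is used to guarantee that a consistent two-sided choice exists throughout the chain.
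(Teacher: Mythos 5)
Your identification $Z=\sigma^{-1}(Y)$ is essentially the paper's Proposition \ref{prop:lg-project}, and the deferred linear-algebra step (shrinking the kernels $V_i$ to rank $r$ using condition (III)) is exactly what that proposition's proof does. The gap is in the dimension bookkeeping, and it is precisely the point the theorem is designed to handle: Theorem \ref{thm:link-det} is stated for an \emph{arbitrary} scheme $S$, with no integrality, Cohen--Macaulay, catenary, or equidimensionality hypotheses. Your argument needs all of these. First, the statement you attribute to Appendix A of \cite{os8} is not available in this generality: the quoted Theorem \ref{thm:lg} gives the codimension of the components of $\LG(r,\sE_\bullet)$ only when $S$ is integral and Cohen--Macaulay, and it asserts no flatness at all (flatness of linked Grassmannians is a separate and harder result, and is not needed here). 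Second, the deduction ``every component of $L$ has dimension $\dim S + r(d-r)$, hence $Y$ has codimension exactly $r(d-r)$, hence each component of $\sigma(S)\cap Y$ has codimension at most $r(d-r)$ in $\sigma(S)$'' is codimension arithmetic that requires $S$ to be (at least) equidimensional and universally catenary with a good dimension theory. Third, the ``intersection codimension inequality'' you invoke holds on a \emph{regular} ambient scheme; $\Gr(r,\sE_1)\times_S\Gr(r,\sE_n)$ is smooth over $S$ but is regular only if $S$ is, and Serre's inequality genuinely fails on non-regular (even Cohen--Macaulay) ambients -- the cone over a quadric surface, where two $2$-planes of codimension $1$ meet only in the vertex, is the standard counterexample, and taking products with affine space shows smoothness over a bad base does not rescue it. One can partially repair this last step by noting $\sigma$ is a section of a smooth morphism, hence a regular immersion of codimension $2r(d-r)$, and applying Krull's height theorem inside $Y$; but converting ``codimension at most $2r(d-r)$ in $Y$'' into ``codimension at most $r(d-r)$ in $S$'' again needs the codimension of $Y$ and catenary subtraction, circling back to the same unavailable hypotheses.

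This is exactly why the paper does not argue directly over $S$: it builds a universal base $S^{\univ}$, a relative pair of Grassmannians over an open subscheme of a product of copies of the matrix scheme $U_d$, proves via Proposition \ref{prop:matrix-prod-smooth} (Strickland plus a flatness criterion) that this base is smooth over $\Spec\ZZ[t]$ and hence regular, runs your dimension count there (Theorem \ref{thm:lg} applies because the universal base is integral and Cohen--Macaulay, and generic finiteness of $\LG\to\Gr\times\Gr$ gives codimension $r(d-r)$ for the image), and then pulls back: any $(S,\sE_\bullet,\sF_1,\sF_n)$ maps locally to $S^{\univ}$ with the linked determinantal locus equal to the preimage of the universal one, and preimages under morphisms to a regular scheme can only drop codimension. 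That universal-case reduction is the missing idea in your proposal. As written, your argument does prove the theorem when $S$ itself is, say, regular (or integral, Cohen--Macaulay and universally catenary) -- which happens to cover the application in Theorem \ref{thm:smoothing}, where the ambient moduli scheme is smooth over the spectrum of a DVR -- but it does not prove the theorem as stated.
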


\begin{rem}\label{rem:codim} Notice that set-theoretically, the
linked determinantal locus is the set of points of $S$ at which the 
kernel of \eqref{eq:link-det} has dimension at least $r$, or equivalently,
the set of points such that the fiber of $\sE_i$ contains at least an
$r$-dimensional space which maps into $\sF_1$ inside $\sE_1$ and into
$\sF_n$ inside $\sE_n$. In particular, the case $i=1$ implies that on
the linked determinantal locus, we must have $\sF_1$ mapping into $\sF_n$,
and the $i=n$ case implies that $\sF_n$ must map into $\sF_1$.

Now, in order to see that Theorem \ref{thm:link-det} is plausible,
consider points of $S$ over which $s$ is nonzero. On this locus,
all the maps are
isomorphisms, and our hypotheses imply that $\sF_1$ maps into $\sF_n$ if
and only if $\sF_n$ maps into $\sF_1$, and that moreover the linked
determinantal locus consists precisely of the points on which $\sF_1$
maps into $\sF_n$. Hence, on this locus it is clear that the 
codimension is at most $r(d-r)$, and we see that the interesting part of 
the theorem is the locus on which $s$ vanishes, or, crucially for our 
application to smoothing theorems, the global situation in which $s$ 
vanishes at some points but not others.
\end{rem}

The strategy of our proof parallels the proof of the corresponding statement
for determinantal varieties: we first consider the universal case and 
conclude the desired statement by realizing the linked determinantal locus
as the image of a linked Grassmannian, and then conclude the statement of
the theorem by pulling back from the universal case.

We next recall the definition of the linked Grassmannian.

\begin{defn}\label{def:lg} Let $S$ be a scheme, $\sE_{\bullet}$ an
$s$-linked chain on $S$, and $r>0$. Then the \textbf{linked Grassmannian}
$\LG(r,\sE_{\bullet})$ is the closed subscheme of
$$G(r,\sE_1) \times_S \dots \times_S G(r,\sE_n)$$
consisting of tuples $(\sF_1,\dots,\sF_n)$ such that for $i=1,\dots,n-1$
we have $f_i(\sF_i) \subseteq \sF_{i+1}$ and $f^i(\sF_{i+1})\subseteq \sF_i$.
\end{defn}

The relationship between linked Grassmannians and linked determinantal
loci is described by the following proposition.

\begin{prop}\label{prop:lg-project} Let $S_0$ be any scheme, and
$\bar{\sE}_{\bullet}$ an $s$-linked chain on $S_0$. Let 
$S=G(r,\bar{\sE}_1)\times_{S_0} G(r,\bar{\sE}_n)$, and let 
$\sE_{\bullet}$ be the pullback of $\bar{\sE}_{\bullet}$ to $S$, with
$\sF_1 \subseteq \sE_1$ and $\sF_n \subseteq \sE_n$ the pullbacks of
the universal bundles on $G(r,\bar{\sE}_1)$ and $G(r,\bar{\sE}_n)$
respectively. 

Then the linked determinantal locus associated to $\sE_{\bullet}$ and
$\sF_1,\sF_n$ is precisely the image of the linked Grassmannian 
$\LG(r,\bar{\sE}_{\bullet})$ under the projection morphism
$$G(r,\sE_1) \times_{S_0} \dots \times_{S_0} G(r,\sE_n) 
\to G(r,\sE_1) \times_{S_0} G(r,\sE_n).$$
\end{prop}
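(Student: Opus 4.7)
The plan is to establish both set-theoretic inclusions of the proposition separately. The forward inclusion --- that the image of $\LG(r,\bar{\sE}_\bullet)$ lies in the linked determinantal locus --- is direct: given any point $(\sF_1,\ldots,\sF_n) \in \LG$, iterating the compatibility relations $f_i(\sF_i) \subseteq \sF_{i+1}$ and $f^i(\sF_{i+1}) \subseteq \sF_i$ shows that each $\sF_i$ is sent into $\sF_1$ by the backward composition $\phi_i := f^1 \circ \cdots \circ f^{i-1}$ and into $\sF_n$ by the forward composition $\psi_i := f_{n-1} \circ \cdots \circ f_i$. Hence $\sF_i$ is contained in the kernel $K_i$ of the map \eqref{eq:link-det}, so $\dim K_i \geq r$, placing $(\sF_1,\sF_n)$ in the linked determinantal locus.

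For the reverse inclusion I would proceed by induction on $n$. The base case $n=2$ is direct: the linked determinantal conditions at $i=1$ and $i=2$ say exactly $f_1(\sF_1) \subseteq \sF_2$ and $f^1(\sF_2) \subseteq \sF_1$, which are the defining conditions of $\LG$, and the projection in question is the identity. For the inductive step, given $(\sF_1,\sF_n)$ in the linked determinantal locus for a chain of length $n \geq 3$, the task is to construct an $r$-dimensional subspace $\sF_2 \subseteq \sE_2$ with $f_1(\sF_1) \subseteq \sF_2$, $f^1(\sF_2) \subseteq \sF_1$, such that the pair $(\sF_2,\sF_n)$ lies in the linked determinantal locus for the shortened chain $\sE_2,\ldots,\sE_n$; the induction hypothesis will then supply compatible $\sF_3,\ldots,\sF_{n-1}$.

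Two preparatory calculations inform the construction of $\sF_2$. First, using only the identities $f_i \circ f^i = f^i \circ f_i = s\cdot\id$, one verifies directly that the oversized kernels satisfy $f_i(K_i) \subseteq K_{i+1}$ and $f^i(K_{i+1}) \subseteq K_i$; in particular $f_1(\sF_1) \subseteq K_2$, so any $r$-dimensional $\sF_2 \subseteq K_2$ containing $f_1(\sF_1)$ already satisfies the first two required conditions. Second, writing $\tilde\phi_i := f^2 \circ \cdots \circ f^{i-1}$ for $i \geq 2$, one has $\phi_i = f^1 \circ \tilde\phi_i$ and, telescoping the identities, $\psi_2 \circ \tilde\phi_i = s^{i-2}\psi_i$; it follows that $\tilde\phi_i$ restricts to a map $K_i \to K_2$, and the resulting images $W_i := \tilde\phi_i(K_i) \subseteq K_2$ form a descending chain $K_2 = W_2 \supseteq W_3 \supseteq \cdots \supseteq W_n$ (using $\tilde\phi_{i+1} = \tilde\phi_i \circ f^i$ together with the stability of $K_\bullet$ above). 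The requirement for the induction to go through then reduces to finding $\sF_2 \subseteq K_2$ of dimension $r$ containing $f_1(\sF_1)$ and satisfying $\dim(\sF_2 \cap W_i) \geq r + \dim W_i - \dim K_i$ for each $i \geq 3$.

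The main obstacle is the existence of such an $\sF_2$, a Schubert-type incidence problem in the Grassmannian of $r$-planes in $K_2$. On the open locus $s \neq 0$ the existence is immediate, since all the maps in the chain are isomorphisms: $f_1(\sF_1)$ and each $W_i$ coincide (up to units) with a single $r$-dimensional subspace of $K_2$, uniquely determining $\sF_2$. The real work is on the locus $s = 0$, where the incidence relations of conditions (II) and (III) of Definition \ref{defn:s-linked} come into play: the equalities $\ker f^i = \operatorname{im} f_i$ and $\ker f_i = \operatorname{im} f^i$ from (II), combined with the disjointness $\operatorname{im} f_{i-1} \cap \operatorname{im} f^i = 0$ forced by (III), are what control how the descending chain $W_\bullet$ and the subspace $f_1(\sF_1)$ sit inside $K_2$. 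Using these incidence constraints to bound the dimensions of the relevant sums (most critically $\dim(f_1(\sF_1) + W_3)$) and then to produce an $\sF_2$ satisfying all the Schubert-type conditions simultaneously is the crux of the argument; once this is in hand, the induction yields the desired preimage of $(\sF_1,\sF_n)$ in $\LG$, completing the proof.
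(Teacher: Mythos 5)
Your forward inclusion is fine, and your inductive reduction is correct: the conditions you isolate on $\sF_2$ (namely $f_1(\sF_1)\subseteq\sF_2\subseteq K_2$ and $\dim(\sF_2\cap W_i)\ge r+\dim W_i-\dim K_i$ for $i\ge 3$) are exactly what is needed to run the induction on the shortened chain $\sE_2,\dots,\sE_n$. But the proof stops precisely at the point where the content of the proposition lies: you never prove that such an $\sF_2$ exists, you only say that conditions (II) and (III) of Definition \ref{defn:s-linked} ``control'' the configuration and that producing $\sF_2$ ``is the crux.'' That existence is not a routine verification. At a point with $s=0$, condition (III) gives $f_1(\sF_1)\cap W_i\subseteq\ker f^1\cap\im f^2=(0)$ for $i\ge 3$, so even the \emph{individual} condition at index $i$ is only satisfiable if $\dim f_1(\sF_1)+\dim W_i\le\dim K_i$, equivalently $\dim f_1(\sF_1)\le\dim(\ker\tilde\phi_i\cap K_i)$; this inequality requires an actual argument (e.g.\ comparing with $f_{i-1}(K_{i-1})\subseteq\ker\tilde\phi_i\cap K_i$ and controlling $\ker f_{i-1}\cap K_{i-1}$), and on top of that one must check that all the Schubert-type conditions can be met \emph{simultaneously} (here the fact that the $W_i$ form a chain helps, but it still has to be written). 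None of this is supplied, so as it stands the proposal has a genuine gap at its central step.

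For comparison, the paper's proof sidesteps this incidence problem entirely. Working with $k$-points, it takes as candidate subspaces the full kernels $K_i$ of \eqref{eq:link-det} for $i=2,\dots,n-1$ (which automatically satisfy $f_i(K_i)\subseteq K_{i+1}$ and $f^i(K_{i+1})\subseteq K_i$), and shrinks them one at a time: if $i$ is minimal with $\dim K_i>r$, then $K_{i-1}$ has dimension exactly $r$, and by condition (III) the subspace $f^i(K_{i+1})\subseteq K_i$ injects under $f^{i-1}$ into $K_{i-1}$ with image inside $\ker f_{i-1}$, so $\dim f^i(K_{i+1})+\dim f_{i-1}(K_{i-1})\le r$; hence one may replace $K_i$ by any $r$-dimensional subspace containing $f_{i-1}(K_{i-1})+f^i(K_{i+1})$, preserving the linkage conditions, and iterate. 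If you want to salvage your outline you must either prove the dimension inequalities above and the simultaneous satisfiability, or switch to this kind of shrinking argument, which replaces the global Schubert problem by a local two-neighbor estimate.
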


\begin{proof} It is clear from the definitions that the image of 
$\LG(r,\bar{\sE}_{\bullet})$ is contained in the linked determinantal
locus, so we need only prove the converse. Since the statement is
set-theoretic, we may work on the level of $k$-valued points with $k$
a field, and we see that what we want to prove is the following:
given $d$-dimensional $k$-vector spaces $E_1,\dots,E_n$, maps
$f^i$ and $f_i$ making an $s$-linked chain on $\Spec k$, and $r$-dimensional
subspaces $F_1 \subseteq E_1$ and $F_n \subseteq E_n$ such that the
kernel of \eqref{eq:link-det} has dimension at least $r$ for $i=1,\dots,n$,
then there exist choices of $r$-dimensional subspaces $F_i \subseteq E_i$
for $i=2,\dots,n-1$ which are linked by the $f_i$ and $f^i$.

Now, let $K_i \subseteq E_i$ be the kernel of \eqref{eq:link-det} for
$i=2,\dots,n-1$. Then by hypothesis, $\dim K_i \geq r$ for all $i$, and
it is also clear that $f_i (K_i) \subseteq K_{i+1}$ and 
$f^i (K_{i+1}) \subseteq K_i$ for all $i$.
We claim that as long as $\dim K_i > r$ for some $i$, we can replace some
$K_i$ by a proper subspace while preserving the above conditions;
iterating this process yields the desired statement. Now, let $i$ be
minimal such that $\dim K_i > r$; we claim that the span of the images
of $K_{i-1}$ and $K_{i+1}$ in $K_i$ must be strictly smaller than $K_i$.
Indeed, by condition (III) of $s$-linkage, the image of $K_{i+1}$ in $K_i$ 
also injects into $K_{i-1}$, but maps into the kernel of $f_{i-1}$.
Because $\dim K_{i-1}=r$, we conclude that the span of the images of 
$K_{i-1}$ and $K_{i+1}$ in $K_i$ must have dimension at most $r$, so 
we can replace $K_i$ by any $r$-dimensional subspace containing this
span; this will preserve the linkage condition, and thus proves the claim.
\end{proof}

We next need to set up the relevant universal spaces. We have the following:

\begin{prop}\label{prop:matrix-prod-smooth} Given $d>0$, let $\bar{U}_d$ be 
the scheme of pairs of $d \times d$ matrixes $A$ and $B$ over $\ZZ[t]$ such 
$$AB=BA=t I_d.$$
Let $U_d$ be the open subscheme of $\bar{U}_d$ on which 
$$\rk A + \rk B \geq d.$$

Then $U_d$ is smooth over $\Spec \ZZ[t]$ of relative dimension $d^2$.
\end{prop}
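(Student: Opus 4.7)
The plan is to verify smoothness via the relative Jacobian criterion, reducing the pointwise calculation to finitely many standard cases via a natural group action. The group $G = \GL_d \times \GL_d$ acts on $\bar U_d$ over $\Spec \ZZ[t]$ by $(g,h)\cdot(A,B) = (g A h^{-1}, h B g^{-1})$, and one checks directly that this preserves both defining equations $AB = tI$ and $BA = tI$, as well as the rank condition singling out $U_d$.

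Over the open locus $t \neq 0$ of $\Spec \ZZ[t]$, the equation $AB = tI$ already forces $A$ invertible with $B = tA^{-1}$ (and $BA = tI$ then automatic), so $U_d$ is identified here with $\GL_d$ and smoothness of relative dimension $d^2$ is immediate. For the locus $t = 0$, the relations become $AB = BA = 0$, and the inclusion $\im B \subseteq \ker A$ gives $\rk A + \rk B \leq d$, with equality forced by the open condition of $U_d$. I would stratify this fiber by $r = \rk A$ from $0$ to $d$, and use standard normal-form arguments to show $G$ acts transitively on each stratum, with representative
\[
A_0 = \begin{pmatrix} I_r & 0 \\ 0 & 0 \end{pmatrix}, \qquad
B_0 = \begin{pmatrix} 0 & 0 \\ 0 & I_{d-r} \end{pmatrix}.
\]

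At the standard point $(A_0, B_0, 0)$, the relative tangent space to $U_d$ over $\Spec \ZZ[t]$ is the kernel of
\[
(\dot A, \dot B) \;\longmapsto\; \bigl(A_0 \dot B + \dot A B_0,\ B_0 \dot A + \dot B A_0\bigr).
\]
Writing $\dot A, \dot B$ in block form with blocks $\dot x_{ij}, \dot y_{ij}$ of sizes $r$ and $d-r$, a direct calculation yields exactly the four block-wise conditions $\dot y_{11} = 0$, $\dot x_{22} = 0$, $\dot y_{12} = -\dot x_{12}$, and $\dot y_{21} = -\dot x_{21}$; the kernel is thus parametrized freely by $\dot x_{11}, \dot x_{12}, \dot x_{21}, \dot y_{22}$, giving relative tangent dimension $r^2 + 2r(d-r) + (d-r)^2 = d^2$. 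By $G$-equivariance the same computation gives relative tangent dimension $d^2$ at every point of $U_d$ lying over $t = 0$, and combined with the computation at $t$-invertible points we conclude that $U_d \to \Spec \ZZ[t]$ is smooth of relative dimension $d^2$.

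The main obstacle is organizational rather than conceptual: one must arrange conventions so that the $G$-action really preserves both equations simultaneously (the identity $(gAh^{-1})(hBg^{-1}) = g(AB)g^{-1}$ is what makes this work), verify transitivity on each rank stratum by exploiting the residual stabilizer of $A_0$ to normalize $B$, and confirm that the block-matrix Jacobian produces no hidden constraints beyond the four explicit conditions above — in particular, that the two systems $AB=tI$ and $BA=tI$ become tangent-level equivalent at the standard point, despite being independent as scheme-theoretic equations.
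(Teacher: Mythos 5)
Your fiberwise analysis is essentially sound: over $t\neq 0$ the identification with $\GL_d$ is correct, and over $t=0$ the orbit argument plus the block tangent-space computation at $(A_0,B_0)$ does show that the special fiber is smooth of dimension $d^2$ at every point (this is a reasonable substitute for the paper's citation of Strickland's theorem that the $t=0$ fiber is reduced of dimension $d^2$ with the rank strata forming open $\GL_d\times\GL_d$-orbits). The gap is in the final inference. Knowing that every fiber of $U_d\to\Spec\ZZ[t]$ is smooth of dimension $d^2$ (equivalently, that the relative tangent space has dimension $d^2$ at every point) does \emph{not} imply that the morphism is smooth: smoothness also requires flatness, and a purely fiberwise computation can never detect its failure. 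Concretely, nothing in your argument rules out the possibility that $U_d$ has an irreducible or embedded component supported entirely over $t=0$; along such a component every fiber would still be smooth of dimension $d^2$, yet the morphism would not be flat, hence not smooth. Nor can you fall back on the Jacobian criterion for the total space: you have $2d^2$ equations in $2d^2$ variables whose relative Jacobian has rank only $d^2$ at points of the special fiber, and adding the $\partial/\partial t$ column does not raise the rank (the pair $(I_d,I_d)$ already lies in the image of your relative differential at $(A_0,B_0)$), so the given presentation does not exhibit $U_d$ locally as $d^2$ equations with independent differentials, and regularity or the lci property of the total space cannot be read off from this computation.

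This missing flatness is exactly what the second half of the paper's proof supplies: it invokes a criterion (Lemma 4.3 of \cite{o-h1}) reducing flatness over $\ZZ[t]$ to showing that after any base change to a DVR no component of $U_d$ is supported in the special fiber, and verifies this by lifting an arbitrary point of the special fiber to a section --- normalizing $A,B$ to complementary diagonal idempotents and replacing the zero diagonal entries by the uniformizer $x$. To complete your argument you need such a relative input, for instance: (i) the same explicit lifting/arc argument, showing every point of the $t=0$ fiber of $U_d$ lies in the closure of the $t\neq 0$ locus and then handling flatness over the two-dimensional base as the paper does; or (ii) establishing that $U_d$ is Cohen--Macaulay of the expected dimension and applying miracle flatness over the regular base $\Spec\ZZ[t]$ together with the constancy of fiber dimension. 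Without one of these, the step ``constant relative tangent dimension $d^2$ at every point, hence smooth of relative dimension $d^2$'' is not valid.
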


\begin{proof} We first observe that the fibers are smooth of dimension 
$d^2$: over points with $t \neq 0$, this is clear, as $U_d$ is simply 
isomorphic to $\GL_d$; on the other hand, where $t=0$ Strickland
\cite{st4} shows that $U_d$ is reduced of dimension $d^2$,
and if we fix the ranks of $A$ and $B$ (necessarily adding to $d$), we 
obtain an open subset of $U_d$
which is an orbit of the action of $\GL_d \times \GL_d$, and must 
therefore be smooth. 

Thus, it is enough to show that $U_d$ is flat over $\Spec \ZZ[t]$.
For this, we appeal to Lemma 4.3 of \cite{o-h1}, which asserts that it
is enough to check that for any base change of $U_d$ to $\Spec R$ with
$R$ a discrete valuation ring, no component of the base change is 
supported in the special fiber. This then amounts to the assertion
that if we are given a discrete valuation ring $R$, and an element $x$ of
$R$, that the scheme of pairs of $d \times d$ matrices $A,B$ over $R$ with
$AB=BA=x I_d$ and with $\rk A + \rk B \geq d$ does not have components 
supported over the closed point of $R$. But if we are given such $A,B$ 
over the residue field $k$ of $R$, with $\rk A=d_1$ and $\rk B=d_2$, there 
are two cases to consider: if $x$ is a unit, then $A$ and $B$ are 
invertible, so we may choose any lift of $A$ to $R$, and set $B=x A^{-1}$. 
On the other hand, if $x$ maps to $0$ in $k$, then up to change of basis on 
both sides, we may assume $A$ is diagonal with the first $d_1$ diagonal 
entries equal to $1$, and the remaining entries $0$, and $B$ is diagonal 
with the first $d-d_2=d_1$ entries equal to $0$, and the remaining entries 
equal to $1$. We may then lift to $R$ simply by replacing the diagonal 
$0$s with $x$.
This shows that every point in the closed fiber is in fact contained in a
section, yielding the desired statement.
\end{proof}

Finally, we recall the relevant theorem on linked Grassmannians from
\cite{os8}.

\begin{thm}\label{thm:lg}
Suppose that $S$ is integral and Cohen-Macaulay, and $\sE_{\bullet}$ is
an $s$-linked chain on $S$. Then every component of $\LG(r,\sE_{\bullet})$ 
has codimension $(n-1)r(d-r)$ inside 
$G(r,\sE_1) \times_S \dots \times_S G(r,\sE_n)$,
and if $s$ is nonzero, then $\LG(r,\sE_{\bullet})$ is irreducible.
\end{thm}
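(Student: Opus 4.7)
The plan is to show that the structure morphism $\pi: \LG(r, \sE_\bullet) \to S$ is equidimensional with every fiber of pure dimension $r(d-r)$. Combined with the Cohen-Macaulay hypothesis on $S$ (so that the total product $G(r, \sE_1) \times_S \cdots \times_S G(r, \sE_n)$ is likewise Cohen-Macaulay, smooth over $S$ of relative dimension $nr(d-r)$), this immediately yields codimension $(n-1)r(d-r)$ for every component.

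First, I would reduce to a universal setting: locally on $S$, trivialize the $\sE_i$ and express the maps $f_i, f^i$ by matrices $A_i, B_i$ with $A_i B_i = B_i A_i = sI$. Combined with the open conditions in axiom (III) of Definition \ref{defn:s-linked}, Proposition \ref{prop:matrix-prod-smooth} identifies the parameter space for $s$-linked chains with an open subscheme $\mathcal{U}$ of $U_d \times_{\Spec \ZZ[t]} \cdots \times_{\Spec \ZZ[t]} U_d$ ($n-1$ factors), which is smooth over $\Spec \ZZ[t]$ of relative dimension $(n-1)d^2$. Then $\LG(r, \sE_\bullet)$ is the pullback under $S \to \mathcal{U}$ of a universal linked Grassmannian $\mathcal{L} \to \mathcal{U}$.

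Next, I would analyze the fibers of $\mathcal{L} \to \mathcal{U}$. Over a point with $t \neq 0$, every $f_i, f^i$ is an isomorphism and any linked tuple $(F_i)$ is determined by $F_1$ alone, so the fiber is $G(r, d)$ of dimension $r(d-r)$. Over a point with $t=0$, I would stratify by the ranks of the induced maps $f_i|_{F_i}$ and $f^i|_{F_{i+1}}$ (equivalently, by the dimensions of the intersections $F_i \cap \ker f_i = F_i \cap \im f^i$). Axioms (II) and (III) ensure that on each stratum the relevant incidence conditions cut out a homogeneous space under the stabilizer of $\sE_\bullet$ in $\prod \GL_d$, whose dimension can be computed explicitly; a combinatorial accounting then shows that every stratum has dimension at most $r(d-r)$, with equality attained on an open stratum of generic behavior. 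A miracle-flatness argument, using that the fiber dimension is constant and $\mathcal{U}$ is regular, then shows $\mathcal{L} \to \mathcal{U}$ is flat with equidimensional total space of dimension $\dim \mathcal{U} + r(d-r)$.

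Finally, pulling back along $S \to \mathcal{U}$ with $S$ Cohen-Macaulay preserves equidimensionality of fibers, yielding that every component of $\LG(r, \sE_\bullet)$ has dimension $\dim S + r(d-r)$, i.e.\ codimension $(n-1)r(d-r)$ inside the product. For irreducibility when $s$ is nonzero, the open $\{s \neq 0\} \subseteq S$ is dense by integrality, and over it $\LG(r, \sE_\bullet)$ is the graph of the isomorphism $G(r, \sE_1) \risom G(r, \sE_n)$, hence irreducible; equidimensionality from the preceding step then forces every component of $\LG(r, \sE_\bullet)$ to meet this open locus, giving irreducibility of the whole space. The main obstacle will be the $t=0$ fiber-dimension count: axioms (II) and (III) are sharply calibrated so the dimension works out to exactly $r(d-r)$, and the stratification and combinatorics require patience. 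An alternative, possibly more conceptual route is to identify the $t=0$ fiber with a known local model of a Shimura variety (in the sense of the work of Goertz cited in Remark \ref{rem:smooth-genl}), where the analogous dimension formula is classical.
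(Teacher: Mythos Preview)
The paper does not prove this theorem; note the sentence immediately preceding the statement: ``Finally, we recall the relevant theorem on linked Grassmannians from \cite{os8}.'' So there is no in-paper proof to compare against, and your proposal must be measured against the argument in Appendix~A of \cite{os8}.

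Your overall strategy---reduce to a universal $s$-linked chain over a regular base $\mathcal{U}$, bound the fiber dimensions of $\mathcal{L}\to\mathcal{U}$, and then pull back---is reasonable, but there is a genuine gap at the miracle-flatness step. Miracle flatness requires the \emph{source} $\mathcal{L}$ to be Cohen--Macaulay, not just the target to be regular, and you have not established this. Knowing only that every fiber of $\mathcal{L}\to\mathcal{U}$ has pure dimension $r(d-r)$ does not by itself imply that every irreducible component of $\mathcal{L}$ has dimension $\dim\mathcal{U}+r(d-r)$: nothing in your argument rules out a component of $\mathcal{L}$ supported entirely over $\{t=0\}$, which would then have dimension at most $\dim\mathcal{U}-1+r(d-r)$. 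The theorem asserts an \emph{exact} codimension for every component, so a lower bound on the dimension of each component is required, and you have supplied only the upper bound. Your irreducibility argument breaks at the same point: you correctly observe that equidimensionality would force every component to meet $\{s\neq 0\}$, but the equidimensionality itself is what is unproved.

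To repair the argument you must either (i) prove directly that $\mathcal{L}$ is Cohen--Macaulay (for instance by exhibiting it as a local complete intersection of the expected codimension in the product of Grassmannians), after which miracle flatness and the rest of your outline go through; or (ii) give an independent lower bound on the dimension of every component, e.g.\ by analyzing the linkage conditions $f_i(\sF_i)\subseteq\sF_{i+1}$, $f^i(\sF_{i+1})\subseteq\sF_i$ as a system of determinantal conditions and using the $s$-linkage axioms to show the effective codimension is at most $(n-1)r(d-r)$ rather than the naive $2(n-1)r(d-r)$. Either route requires real additional work beyond the fiber-dimension count you sketched.
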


We are now ready to prove our main theorem.

\begin{proof}[Proof of Theorem \ref{thm:link-det}] Let $T$ be the
product of $n-1$ copies of $U_d$ over $\Spec \ZZ[t]$, and let $S_0^{\univ}$
be the open subscheme of $T$ on which $\ker A_{i+1} \cap \im A_i=(0)$
and $\ker B_{i} \cap \im B_{i+1}=(0)$ for $i=1,\dots,n-2$. Then we have
an $s$-linked chain $\sE_{\bullet}^{\univ}$ on $S_0^{\univ}$ (with $s=t$) 
by taking
$n$ copies of the trivial bundle, and using the $A_i$ and $B_i$ to define
our maps. Let $S^{\univ}$ be obtained from $S_0^{\univ}$ as in
Proposition \ref{prop:lg-project}. We claim that it is enough to prove the
theorem for the corresponding linked determinantal locus on $S^{\univ}$.
Indeed, given any $S$ and $\sE_{\bullet}$, the theorem is local on $S$,
so we may assume that the $\sE_i$ are trivialized, and our $s$-linked
chain and subbundles $\sF_1$ and $\sF_n$ then induce a morphism to
$S^{\univ}$ under which they are obtained as the pullbacks of
$\sE_{\bullet}^{\univ}$ and the universal subbundles. Moreover, under
this morphism we have that the linked determinantal locus on $S$ is the
preimage of the linked determinantal locus on $S^{\univ}$. Now, by
Proposition \ref{prop:matrix-prod-smooth} we have that $S_0^{\univ}$
and hence $S^{\univ}$ is smooth over $\Spec \ZZ[t]$, and hence regular,
and it then follows by Theorem 7.1 of \cite{ho1} that if every component
of the linked determinantal locus in $S^{\univ}$ has codimension at most
$r(d-r)$, then the same is true in $S$.

But according to Proposition \ref{prop:lg-project}, the linked determinantal
locus in $S^{\univ}$ is the image of the linked Grassmannian 
$\LG(r,\sE_{\bullet}^{\univ})$ over $S^{\univ}_0$. By Theorem \ref{thm:lg},
we know that $\LG(r,\sE_{\bullet}^{\univ})$ is irreducible of codimension
$(n-1)r(d-r)$, and it is clear that it maps generically finitely onto its 
image in $S^{\univ}$, since for $t \neq 0$ the subbundle $\sF_1$ uniquely 
determines all the other subbundles. We thus conclude by
Proposition 5.6.5 of \cite{ega42} that
the image -- that is, the linked determinantal locus -- has codimension 
$r(d-r)$, as desired.
\end{proof}

\bibliographystyle{amsalpha}
\bibliography{gen}

\newcommand{\etalchar}[1]{$^{#1}$}
\newcommand{\noopsort}[1]{} \newcommand{\printfirst}[2]{#1}
  \newcommand{\singleletter}[1]{#1} \newcommand{\switchargs}[2]{#2#1}
\providecommand{\bysame}{\leavevmode\hbox to3em{\hrulefill}\thinspace}
\providecommand{\MR}{\relax\ifhmode\unskip\space\fi MR }
\providecommand{\MRhref}[2]{%
  \href{http://www.ams.org/mathscinet-getitem?mr=#1}{#2}
}
\providecommand{\href}[2]{#2}
\begin{thebibliography}{GPHH{\etalchar{+}}12}

\bibitem[AB]{a-b1}
Omid Amini and Matthew Baker, \emph{Linear series on metrized complexes of
  algebraic curves}, Mathematische Annalen, to appear.

\bibitem[BN07]{b-n1}
Matthew Baker and Serguei Norine, \emph{{R}iemann-{R}och and {A}bel-{J}acobi
  theory on a finite graph}, Advances in Mathematics \textbf{215} (2007),
  no.~2, 766--788.

\bibitem[CDPR12]{c-d-p-r}
Filip Cools, Jan Draisma, Sam Payne, and Elina Robeva, \emph{A tropical proof
  of the {B}rill-{N}oether theorem}, Advances in Mathematics \textbf{230}
  (2012), no.~2, 759--776.

\bibitem[EH83]{e-h4}
David Eisenbud and Joe Harris, \emph{Divisors on general curves and cuspidal
  rational curves}, Inventiones Mathematicae \textbf{74} (1983), 371--418.

\bibitem[EH87]{e-h6}
\bysame, \emph{The {K}odaira dimension of the moduli space of curves of genus
  $\geq 23$}, Inventiones Mathematicae \textbf{90} (1987), 359--387.

\bibitem[EM02]{e-m1}
Eduardo Esteves and Nivaldo Medeiros, \emph{Limit canonical systems on curves
  with two components}, Inventiones Mathematicae \textbf{149} (2002), no.~2,
  267--338.

\bibitem[GD65]{ega42}
Alexander Grothendieck and Jean Dieudonn\'e, \emph{{\'E}l\'ements de
  g\'eom\'etrie alg\'ebrique: {IV.} \'{E}tude locale des sch\'emas et des
  morphismes de sch\'emas, seconde partie}, Publications math\'ematiques de
  l'I.H.\'E.S., vol.~24, Institut des Hautes \'Etudes Scientifiques, 1965.

\bibitem[GH80]{g-h1}
Phillip Griffiths and Joseph Harris, \emph{On the variety of special linear
  systems on a general algebraic curve}, Duke Mathematical Journal \textbf{47}
  (1980), 233--272.

\bibitem[Gie82]{gi1}
D.~Gieseker, \emph{Stable curves and special divisors: {P}etri's conjecture},
  Inventiones Mathematicae \textbf{66} (1982), 251--275.

\bibitem[Goe01]{go2}
Ulrich Goertz, \emph{On the flatness of models of certain {S}himura varieties
  of {PEL}-type}, Mathematische Annalen \textbf{321} (2001), 689--727.

\bibitem[GPHH{\etalchar{+}}12]{g-h-h-m-r-s-t}
Luis Garc\'ia-Puente, Nickolas Hein, Christopher Hillar, Abraham {Mart\'in del
  Campo}, James Ruffo, Frank Sottile, and Zach Teitler, \emph{The secant
  conjecture in the real {S}chubert calculus}, Experimental Mathematics
  \textbf{21} (2012), no.~3, 252--265.

\bibitem[HM82]{h-m2}
Joe Harris and David Mumford, \emph{On the {K}odaira dimension of the moduli
  space of curves}, Inventiones Mathematicae \textbf{67} (1982), 23--88.

\bibitem[HO08]{o-h1}
David Helm and Brian Osserman, \emph{Flatness of the linked {G}rassmannian},
  Proceedings of the AMS \textbf{136} (2008), no.~10, 3383--3390.

\bibitem[Hoc75]{ho1}
Melvin Hochster, \emph{Big {C}ohen-{M}acaulay modules and algebras and
  embeddability in rings of {W}itt vectors}, Queen's Papers on Pure and Applied
  Math \textbf{42} (1975), 106--195.

\bibitem[Kle76]{kl3}
Steven~L. Kleiman, \emph{$r$-special subschemes and an argument of {S}everi's},
  Advances in Mathematics \textbf{22} (1976), no.~1, 1--31.

\bibitem[Mai98]{ma5}
Laila Maino, \emph{Moduli space of enriched stable curves}, PhD thesis, Harvard
  University, 1998.

\bibitem[MO]{o-m1}
John Murray and Brian Osserman, \emph{Linked determinantal loci and limit
  linear series}, preprint.

\bibitem[Oss06]{os8}
Brian Osserman, \emph{A limit linear series moduli scheme}, Annales de
  l'Institut Fourier \textbf{56} (2006), no.~4, 1165--1205.

\bibitem[Oss13]{os21}
\bysame, \emph{Relative dimension of morphisms and dimension for algebraic
  stacks}, preprint, 2013.

\bibitem[Oss14a]{os23}
\bysame, \emph{Dimension counts for limit linear series on curves not of
  compact type}, preprint, 2014.

\bibitem[Oss14b]{os24}
\bysame, \emph{Limit linear series and the {A}mini-{B}aker construction}, in
  preparation, 2014.

\bibitem[Oss14c]{os20}
\bysame, \emph{Limit linear series moduli stacks in higher rank}, preprint,
  2014.

\bibitem[OT14]{o-t1}
Brian Osserman and Montserrat {Teixidor i Bigas}, \emph{Linked alternating
  forms and linked symplectic {G}rassmannians}, International Mathematics
  Research Notices \textbf{2014} (2014), no.~3, 720--744.

\bibitem[Str82]{st4}
Elisabetta Strickland, \emph{On the conormal bundle of the determinantal
  variety}, Journal of Algebra \textbf{75} (1982), no.~2, 523--537.

\bibitem[{Tei}91]{te1}
Montserrat {Teixidor i Bigas}, \emph{Brill-{N}oether theory for stable vector
  bundles}, Duke Mathematical Journal \textbf{62} (1991), no.~2, 385--400.

\end{thebibliography}

\end{document}